\newtheorem{theorem}{Theorem}[section]
\newtheorem{lemma}[theorem]{Lemma}
\newtheorem{proposition}[theorem]{Proposition}
\newtheorem{corollary}[theorem]{Corollary}
\numberwithin{equation}{section}
\def\umono{\ar@{_{(}->}[u]}
\def\uumono{\ar@{_{(}->}[uu]}
\def\lmono{\ar@{_{(}->}[l]}
\def\llmono{\ar@{_{(}->}[ll]}
\newcommand{\Hom}{{\rm Hom \,}}
\newcommand{\Z}{{\mathbb Z}}
\def\nor{\trianglelefteq}
\def\sdp{\rtimes}
\def\nle{\not\le}
\def\iso{\cong}
\def\ord#1{\vert #1 \vert}
\def\ind#1#2{\vert #1\, :\, #2\vert}
\def\gp#1{\langle \, #1 \, \rangle}
\def\phi{\varphi}
\def\Phi{\varPhi}
\def\ZZ{\Bbb Z}
\def\NN{\Bbb N}
\def\FF{\Bbb F}
\def\L{\mathcal L}
\def\O{\mathcal O}
\def\Aut{\text{Aut}}
\def\dim{\text{dim }}
\def\isom{\text{Isom}}
\def\divides{\bigm |}
\def\notdivides{\not\kern 2.2pt\bigm |}
\def\tensor{\otimes}
\def\Res{\text{Res}}
\def\Ind{\text{Ind}}
\def\Coind{\text{Coind}}
\def\Hom{\text{Hom}}
\def\omegabar#1{\frak A_1(#1)}
\title[Strongly closed subgroups]
{Strongly closed subgroups of finite groups}
\author{Ram\'on J. Flores and Richard M. Foote}
\thanks{The first author was supported by MEC grant MTM2004-06686.}
\date{\today}
\begin{document}

\begin{abstract}
%%%%%%%%%%%%%%%%%%%%%%%%%%%%%%%%%%%%%%%%%%%%%%%%%%%%%%%%%%%%%%%%%%%%%%%%%%%%
This paper gives a complete classification of the finite
groups that contain a strongly closed $p$-subgroup for $p$ any prime.

\end{abstract}

\maketitle

\section{Introduction}

For any finite group $G$ and subgroup $S$ we say two elements of $S$
are {\it fused} in $G$ if they are conjugate in $G$
but not necessarily in $S$.
This concept has played a central
role in group theory and representation theory, particularly in the case
when $S$ is a Sylow $p$-subgroup of $G$ for $p$ a prime.
A subgroup $A$ of $S$ is called {\it strongly closed}
in $S$ with respect to $G$ if for every $a \in A$, every element
of $S$ that is fused in $G$ to $a$ lies in $A$; in other words,
$a^G \cap S \subseteq A$, where $a^G$ denotes the $G$-conjugacy class
of $a$.  It is easy to verify that if $A$ is a $p$-subgroup, then $A$
is strongly closed in a Sylow $p$-subgroup if and only if it is strongly
closed in $N_G(A)$, so the notion of strong closure for a $p$-subgroup
does not depend on the Sylow subgroup containing it.
For a $p$-group $A$ we therefore simply say $A$ is strongly closed.
Seminal works in the theory of strongly closed 2-subgroups are
the celebrated Glauberman $Z^*$-Theorem, \cite{Gla66}, and
Goldschmidt's theorem on strongly closed abelian 2-subgroups,
\cite{Gol74}.
The $Z^*$-Theorem proved that if $A$ is strongly closed and of order 2,
then $\overline A \le Z(\overline G)$, where the overbars denote passage to
$G/O_{2'}(G)$.
Goldschmidt extended this by showing that if $A$ is a strongly closed
abelian 2-subgroup, then $\gp{ \overline A^{\overline G}}$
is a central product of an abelian 2-group and quasisimple groups that
either have a $BN$-pair of rank 1 or have abelian Sylow 2-subgroups.
These two theorems, in particular, played fundamental roles in the
study of finite groups, especially in the Classification of the
Finite Simple Groups.
The purpose of this paper is to give a classification of all finite groups
containing a strongly closed $p$-subgroup for an arbitrary prime $p$
(not assuming the strongly closed subgroup is abelian).

The concept of strong closure has important ramifications beyond finite group theory.
In particular, it is intimately connected to Puig's formulation
of {\it fusion systems} (or Frobenius categories), which evolved from the
modular representation theory of finite groups: To each $p$-block of a
finite group one can associate a (saturated) fusion system.
Puig's axiomatic approach provided the formalism necessary to study fusion
in a context which subsumes, as a special case, the natural fusion system arising
from pairs $(G,S)$, where $G$ is a finite group and $S$ is a Sylow $p$-subgroup
of $G$.
The concept of strong closure extends in an obvious way to abstract fusion systems
and plays a critical role therein: If $\mathcal F$ is a fusion system on a $p$-group $S$,
then the homomorphic images of $\mathcal F$ are in bijective correspondence
with the strongly closed subgroups of $S$.
Fusion systems were further refined by Broto, Levi, and Oliver in
\cite{Broto032} to create the class of $p$-local finite groups
(see also \cite{AS07}, \cite{Broto207}, \cite{Broto07} and \cite{Linckelmann06}).
Oliver then used this approach to prove that the homotopy type
of the $p$-completed classifying space of a finite group $G$ is uniquely
determined by the saturated fusion system $(G,S)$,
where $S$ is a Sylow $p$-subgroup of $G$.
Thus strong closure and its extensions to fusion systems and $p$-local finite
group theory also has significant ramifications in deep and currently very active
areas of modular representation theory and algebraic topology.

This paper is also the group-theoretic result needed for a classification
theorem in homotopy theory, which was the original impetus for our joint work.
Groups containing a strongly closed 2-subgroup
were characterized earlier in \cite{Foote97}, and that theorem formed the
underpinning of a complete description of the
$B\Z/2$-cellularization (in the sense of Dror-Farjoun) of the
classifying spaces of all finite groups, \cite{Ramon} and \cite{Flores07}.
In order to correspondingly describe the
$B\Z/p$-cellular\-ization of classifying spaces for odd primes $p$, we needed the
classification of finite groups containing a strongly closed $p$-subgroup
for odd $p$ --- this is the main theorem herein.
The complete description of the cellular structure
(with respect to $B\Z/p$)
of classifying spaces for
all finite groups and all primes $p$ is then established in
the separate paper \cite{Flores-Foote08}.
Our two classifications, the latter relying on the former, epitomize
the rich interplay between their subject areas that has
historically been evident and is currently even more vibrant.
%%  The
%%  results also mirror the striking advances in topics such as fusion
%   systems, outlined above, that have recently captured the interest of both
%%  topologists and group theorists.

A curious application of strong closure to ordinary representation theory and
number theory appears in \cite{Foote97a}.

Finally,
although the techniques used in this paper are purely group-theoretic,
the underlying fusion arguments provide deeper insight into
topological considerations in our second classification.  Indeed,
the marriage of these elements is seen in high relief in
Section~\ref{examples} where we explore more explicit
configurations that give rise in \cite{Flores-Foote08}
to interesting --- what might be called exotic --- classifying spaces.

\medskip
\noindent
{\bf Acknowledgements}.
We thank Carles Broto, David Dummit, Bob Oliver
and J\'er\^ome Scherer for helpful discussions, and also
George Glauberman for providing a motivating example.
We thank Michael Aschbacher for sharing his lecture notes and broad perspective,
and we acknowledge that a number of the results in Section~3 were
also proven independently by him.

%%%%%%%%%%%%%%%%%%%%%%%%%%%%%%%%%%%%%%%%%%%%%%%%%%%%%%%%%%%%%%%%%%%%%%%%%%%%
\subsection{Statement of Results}
\label{statement-of-results}
%%%%%%%%%%%%%%%%%%%%%%%%%%%%%%%%%%%%%%%%%%%%%%%%%%%%%%%%%%%%%%%%%%%%%%%%%%%%
%
\    %creates a blank line

To describe the main results we introduce some new notation.
Henceforth $p$ is any prime, $S$ is a Sylow $p$-subgroup of the
finite group $G$ and $A$ is a subgroup of $S$.
In general let $R$ be any
$p$-subgroup of $G$. If $N_1$ and $N_2$ are normal subgroups of
$G$ with $R \cap N_i \in Syl_p(N_i)$ for both $i=1,2$, then $R
\cap N_1 N_2$ is a Sylow $p$-subgroup of $N_1 N_2$.  Thus there is
a unique largest normal subgroup $N$ of $G$ for which $R \cap N
\in Syl_p(N)$; denote this subgroup by $\O_R (G)$. Thus
$$
R \text{ is a Sylow $p$-subgroup of $\gp{R^G}$ if and only if } R
\le \O_R (G).
$$
Note that $O_{p'}(G/\O_R (G)) = 1$; in particular, if $R = 1$ is
the identity subgroup then $\O_1(G) = O_{p'}(G)$.
In general, $R \O_R(G) / \O_R(G)$ does not contain the
Sylow $p$-subgroup of any nontrivial normal subgroup of $G/\O_R(G)$;
in other words, $\O_{\overline R}(\overline G) = 1$,
where overbars denote passage to $G/\O_R(G)$.
Throughout the paper we freely use the
observation that strong closure passes to quotient groups
(cf.~Lemma~\ref{basic-facts}), so when analyzing groups where
$R \nle \O_R(G)$ we may factor out $\O_R(G)$.
With this in mind, the classification for strongly closed 2-subgroups
from \cite{Foote97} is as follows:

%%%%%%%%%%%%%%%%%%%%%%%%%%%%%%%%%%%%%%%%%%%%%%%%%%%%%%%%%%%%%%%%%%%%%%%%%%
\begin{theorem}
\label{theorem3-1-p-equal-2}
%%%%%%%%%%%%%%%%%%%%%%%%%%%%%%%%%%%%%%%%%%%%%%%%%%%%%%%%%%%%%%%%%%%%%%%%%%
Let $G$ be a finite group that
possesses a strongly closed 2-subgroup $A$. Assume $A$ is not a
Sylow 2-subgroup of $\gp{A^G}$, and let
$\overline G = G /\O_A(G)$.
Then $\overline A \ne 1$ and
$\gp{\overline A^{\overline G}} = L_1 \times L_2 \times \cdots \times L_r$,
where each $L_i$ is isomorphic to $U_3(2^{n_i})$ or $Sz(2^{n_i})$ for some $n_i$, and
$\overline A \cap L_i$ is the center of a Sylow 2-subgroup of $L_i$.
\end{theorem}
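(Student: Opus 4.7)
My plan is to prove this via a sequence of reductions followed by an appeal to Goldschmidt's theorem on abelian strongly closed 2-subgroups, together with extra fusion-theoretic work to handle the non-abelian case.

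First, I would make the standard reductions. Using the fact (cited as Lemma~\ref{basic-facts}) that strong closure passes to quotients, I would pass to $\overline{G} = G/\O_A(G)$; the hypothesis that $A$ is not Sylow in $\gp{A^G}$ is exactly the statement $A \nleq \O_A(G)$, so $\overline{A} \ne 1$. Replacing $\overline{G}$ by $H = \gp{\overline{A}^{\overline{G}}}$, I note that $\overline{A}$ remains strongly closed in a Sylow 2-subgroup of $H$ and that $\O_{\overline{A}}(\overline{G}) = 1$ prevents $\overline{A}$ from being Sylow in any nontrivial normal subgroup of $H$. In particular, $O_{2'}(H) = 1$, and $O_2(H) = 1$ because $O_2(H)\overline{A}$ would otherwise generate a normal subgroup in which $\overline{A}$ is Sylow.

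Second, I would analyze $F^*(H)$. Since $O_{2'}(H) = O_2(H) = 1$, we have $F^*(H) = E(H)$, and I would write $E(H) = L_1 \cdots L_r$ as a central product of components. Strong closure of $\overline{A}$ forces $\overline{A} \cap L_i$ to be strongly closed in a Sylow 2-subgroup of $L_i$, and the components $L_i$ with nontrivial intersection are permuted by $H$; one also verifies that the central product is in fact a direct product on the relevant components, by tracking fusion of elements of $\overline{A} \cap Z(L_i)$.

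Third, I would reduce to Goldschmidt's setting. Because $\overline{A}$ need not be abelian, I would identify a canonical abelian characteristic subgroup $B \le \overline{A}$ that can be shown to be strongly closed in $S$ with respect to $H$ — for instance $\Omega_1(Z(\overline{A}))$ or, following Goldschmidt, the subgroup generated by the Thompson-type subgroup $Z(J(\overline{A}))$ under appropriate fusion. Applying Goldschmidt's theorem to $B$, I obtain that $\gp{B^H}$ is a central product of an abelian 2-group and quasisimple groups with rank-1 $BN$-pair or abelian Sylow 2-subgroups; since $O_2(H) = 1$, the abelian part is trivial, and the quasisimple factors with abelian Sylow 2-subgroups are ruled out because then $B$ would already contain a full Sylow 2-subgroup, forcing $\overline{A}$ to be Sylow in $\gp{A^G}$.

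Finally, the surviving components are rank-1 $BN$-pair groups in characteristic $2$: $L_2(2^n)$, $U_3(2^n)$, or $Sz(2^n)$. I would exclude $L_2(2^n)$ because its Sylow 2-subgroup is elementary abelian, which again forces $\overline{A}$ to be Sylow. For the remaining $U_3(2^n)$ and $Sz(2^n)$, a direct computation in these Suzuki-type 2-groups shows that every non-central 2-element is $L_i$-fused into the complementary coset of $Z(\mathrm{Syl}_2(L_i))$, so strong closure forces $\overline{A} \cap L_i = Z(\mathrm{Syl}_2(L_i))$, as required. I expect the main obstacle to be the third step: locating a characteristic abelian subgroup of $\overline{A}$ that is actually strongly closed (rather than merely characteristic, which does not transfer automatically), and then leveraging Goldschmidt's conclusion about $B$ to control the full $\overline{A}$, ruling out the possibility that $\overline{A}$ is strictly larger than the center of Sylow inside each component.
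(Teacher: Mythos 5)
Your outline is reasonable in its first, second and fourth steps (the reductions to $\O_A(G)=1$, the passage to $F^*$, and the identification of $Z(S_i)$ as the unique proper nontrivial strongly closed subgroup of a Sylow $2$-subgroup of $U_3(2^n)$ or $Sz(2^n)$ all match standard arguments, and the last of these is exactly Proposition~\ref{normalizer-of-others-in-theorem3}(1)). But the third step — the only step that addresses the real difficulty, namely that $\overline A$ need not be abelian — contains a genuine gap, and you have not closed it by naming it. Neither $\Omega_1(Z(\overline A))$ nor $Z(J(\overline A))$ nor any other characteristic subgroup of $\overline A$ is strongly closed in general: strong closure is a fusion condition on $\overline A$ as a subset of $S$, and it simply does not pass to characteristic subgroups (if $a\in Z(\overline A)$ and $a^g\in S$, strong closure only puts $a^g$ in $\overline A$, not in $Z(\overline A)$). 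There is no known canonical device that manufactures a nontrivial \emph{abelian} strongly closed subgroup inside a non-abelian one, and consequently the non-abelian case cannot be reduced to Goldschmidt's 1974 abelian theorem in the way you propose. Historically this is precisely why Goldschmidt needed a second, separate Annals paper \cite{Gol75} to treat non-abelian strongly closed $2$-subgroups; that theorem describes $\gp{A^G}$ directly and is the actual engine behind Theorem~\ref{theorem3-1-p-equal-2}.

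For comparison: the present paper does not prove Theorem~\ref{theorem3-1-p-equal-2} at all — it quotes it from \cite{Foote97}, whose argument rests on \cite{Gol75} (together with \cite{Gol74}) rather than on a reduction of the non-abelian case to the abelian one. A second, smaller soft spot in your sketch is the final clause: even granting an abelian strongly closed $B\le\overline A$, Goldschmidt's conclusion describes $\gp{B^{\overline G}}$, and you would still owe an argument that $\gp{\overline A^{\overline G}}$ has no components beyond those of $\gp{B^{\overline G}}$ and that $\overline A\cap L_i$ is not strictly larger than $Z(S_i)$; the latter does follow from the irreducible action of the Cartan subgroup on $S_i/Z(S_i)$, but the former needs the component-by-component analysis you only gesture at. As written, the proposal is a correct plan for the abelian case and an unproved hope for the general one.
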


The classification for $p$ odd, which is the principal objective
of the paper, yields a more diverse set of ``obstructions'' with
added ``decorations'' as well.

%%%%%%%%%%%%%%%%%%%%%%%%%%%%%%%%%%%%%%%%%%%%%%%%%%%%%%%%%%%%%%%%%%%%%%%%%%
\begin{theorem}
\label{theorem3-1}
%%%%%%%%%%%%%%%%%%%%%%%%%%%%%%%%%%%%%%%%%%%%%%%%%%%%%%%%%%%%%%%%%%%%%%%%%%
Let $p$ be an odd prime and let $G$ be a finite group that
possesses a strongly closed $p$-subgroup $A$. Assume $A$ is not a
Sylow $p$-subgroup of $\gp{A^G}$, and let
$\overline G = G / \O_A(G)$.  Then $\overline A \ne 1$ and
\begin{equation}
\label{main-isomorphism}
\gp{\overline A^{\overline G}} = (L_1 \times L_2 \times \cdots
\times L_r)(D \cdot A_F)
\end{equation}
where $r \ge 1$, each $L_i$ is a simple group, and
$A_i = \overline A \cap L_i$ is a homocyclic abelian group.
Furthermore, $D = [D,A_F]$ is a (possibly trivial)
$p'$-group normalizing each $L_i$, and
$A_F$ is a (possibly trivial) abelian subgroup
of $\overline A$ of rank at most $r$ normalizing $D$ and
each $L_i$ and inducing outer
automorphisms on each $L_i$, and the extension
$(A_1 \cdots A_r):A_F$ splits.
Each $L_i$ belongs to one of the following families:

\begin{enumerate}
\item[(i)]
$L_i$ is a group of Lie type in characteristic $\ne p$ whose Sylow
$p$-subgroup is abelian but not elementary abelian; in this case
the Sylow $p$-subgroup of $L_i$ is homocyclic of the same rank as
$A_i$ but larger exponent than $A_i$;
here $D/(D \cap L_i C_{\overline G}(L_i))$ is a cyclic
$p'$-subgroup of the outer diagonal automorphism group
of $L_i$, and $A_F/C_{A_F}(L_i)$ acts
as a cyclic group of field automorphisms on $L_i$.

\item[(ii)]
$L_i \iso U_3(p^n)$ or $Re(3^n)$ is a group of $BN$-rank 1
($p=3$ with $n$ odd and $\ge 2$ in the latter family);
in the unitary case $A_i$ is the
center of a Sylow $p$-subgroup of $L_i$
(elementary abelian of order $p^n$),
and in the Ree group case $A_i$ is either the center or the
commutator subgroup of a Sylow 3-subgroup (elementary
abelian of order $3^n$ or $3^{2n}$ respectively);
in both families $D$ and $A_F$ act trivially on $L_i$.

\item[(iii)]
$L_i \iso G_2(q)$ with $(q,3) = 1$; here $\ord{A_i} = 3$
and both $D$ and $A_F$ act trivially on $L_i$.

\item[(iv)]
$L_i$ is one of the following sporadic groups, where in each case
$A_i$ has prime order, and both $D$ and $A_F$ act trivially on $L_i$:

\begin{description}
\item[($p = 3$)\hphantom{1}]
$J_2$,
\item[($p = 5$)\hphantom{1}]
$Co_3$, $Co_2$, $HS$, $Mc$,
\item[($p = 11$)]
$J_4$.
\end{description}

\item[(v)]
$L_i \iso J_3$, $p = 3$, and $A_i$ is either the center or the
commutator subgroup of a Sylow 3-subgroup (elementary
abelian of order 9 or 27 respectively); here $D$ and $A_F$ act
trivially on $L_i$.
\end{enumerate}
\end{theorem}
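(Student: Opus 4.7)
The plan is to reduce the statement to a ``minimal'' configuration, identify the generalized Fitting subgroup of the reduced group, and then invoke the Classification of Finite Simple Groups to pin down which simple groups and which outer-automorphism ``decorations'' can actually arise.

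First I would make the standard reductions. By the paper's basic lemma on quotients, strong closure passes to $\overline G = G/\O_A(G)$, so I may assume $\O_A(G) = 1$, which immediately gives $O_{p'}(G) = 1$ as well. Replacing $G$ by $\gp{A^G}$ does not disturb strong closure, so I may assume $G = \gp{A^G}$. The hypothesis that $A$ is not Sylow in $\gp{A^G}$ together with $\O_A(G)=1$ implies $\overline A \ne 1$. Using $C_G(F^*(G)) \le F^*(G)$ and $O_{p'}(G)=1$, a short argument shows $F^*(G)$ cannot be a $p$-group (otherwise $G$ would be a $p$-group and $A$ Sylow), so $E(G)$ is a nontrivial central product of components $L_1,\dots,L_m$ which $A$ permutes by conjugation.

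Next I would prove that $A$ normalizes every component. If instead some $a \in A$ cycles $L_{i_1},\dots,L_{i_k}$ with $k \ge 2$, then writing $a$ in its diagonal form and conjugating by suitable elements of $L_{i_1}$ produces an element of $a^G \cap S$ whose ``$L_{i_1}$-coordinate'' has been changed independently; this element cannot lie in $A$, violating strong closure. Once each $L_i$ is $A$-invariant, I would set $A_i = \overline A \cap L_i$ and verify that $A_i$ is strongly closed in a Sylow $p$-subgroup of $L_i\gp{A_F}$. The decorations emerge naturally: $A$ induces via conjugation a subgroup of $\Out(L_1 \cdots L_r)$ whose $p'$-part comes from $C_G(E(G))$ (giving the $p'$-group $D$) and whose $p$-part comes from outer automorphisms of $p$-power order (giving $A_F$); the identity $D=[D,A_F]$ is forced by $\O_A(G)=1$ (any $A_F$-fixed $p'$-elements would enlarge $\O_A(G)$), and the splitting of $(A_1\cdots A_r):A_F$ follows from standard properties of outer-automorphism extensions of simple groups.

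The main work --- and the principal obstacle --- is the component-by-component analysis using CFSG. For each finite simple group $L$, and each possible outer automorphism $\alpha$ of $p$-power order, I must determine when a Sylow $p$-subgroup $P$ of $L\gp{\alpha}$ contains a \emph{proper} nontrivial subgroup $A_0 \cap L$ with $x^L \cap P \subseteq A_0$ for every $x \in A_0$. I would partition into cases by Sylow $p$-structure: groups of Lie type in characteristic $p$ (only $U_3(p^n)$ and $Re(3^n)$ survive, by Bender/Timmesfeld-style analysis of $BN$-pairs of rank $\ge 2$, giving (ii)); groups of Lie type in cross characteristic with abelian Sylow (handled via Alperin's fusion theorem applied to the Weyl group of a maximal torus, giving the homocyclic picture of (i), and forcing $D$ to be a diagonal cyclic $p'$-group and $A_F$ a cyclic group of field automorphisms); the exceptional small-rank case $G_2(q)$ with $p=3$ (case (iii)); the alternating groups (ruled out except as already covered); and the 26 sporadic groups together with the Tits group, extracted by direct inspection of published fusion data in the Atlas (yielding (iv) and $J_3$ in (v)). In each surviving family I would then confirm that the candidate $A_i$ is actually strongly closed, identify its isomorphism type, and check which outer automorphisms preserve it, which pins down the action of $D$ and $A_F$ on each $L_i$ as stated.
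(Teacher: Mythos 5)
Your overall architecture matches the paper's: reduce to $\O_A(G)=1$ and $G=\gp{A^G}$, show $O_p(G)\le Z(G)$ and that $A$ normalizes each component of $F^*(G)$, then run a component-by-component CFSG analysis and read off the decorations $D$ and $A_F$ from the outer automorphism groups. (The paper organizes this slightly differently, first proving the statement for a \emph{minimal} strongly closed subgroup --- Theorem~\ref{theorem3-2} --- by induction on a minimal counterexample, and only then deriving Theorem~\ref{theorem3-1}; your direct route is viable, but you would still need that inductive scaffold, since the component analysis repeatedly quotes the theorem for proper subgroups.) The serious problem is that your case division of the simple groups is not exhaustive, and the omitted case is exactly where the paper does most of its work. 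You list: Lie type in characteristic $p$; Lie type in cross characteristic \emph{with abelian Sylow $p$-subgroup}; $G_2(q)$ with $p=3$; alternating groups; sporadic groups. This leaves out every cross-characteristic group of Lie type with non-abelian Sylow $p$-subgroup other than $G_2(q)$ --- e.g.\ $PSL_n(q)$ with $p\divides q-1$ and $p\le n$, the unitary/symplectic/orthogonal groups with $p$ dividing the order of the Weyl group, ${}^3D_4(q)$ at $p=3$, and $E_6(q),\dots,E_8(q)$ at $p=3,5,7$. Showing that these admit no proper nontrivial strongly closed $p$-subgroup is the content of Lemmas~\ref{G-not-classical} and~\ref{G-not-exceptional} (roughly half of the proof of Theorem~\ref{theorem3-2}), and it is not a routine fusion-theorem application: one must build $A$ up from $\Omega_1(S_T)$ using wreath-product and natural-module arguments for the classical groups, and tables of centralizers of semisimple elements for the exceptional ones, with delicate special treatment of $SL_3^\epsilon(q)$ and ${}^3D_4(q)$. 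Without this case the classification is incomplete.

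Two smaller points about the decorations. The $p'$-group $D$ does \emph{not} come from $C_G(E(G))$: elements of $C_G(E(G))$ induce trivial outer automorphisms, whereas $D$ must induce the nontrivial outer \emph{diagonal} automorphisms on the linear and unitary components. The paper locates $D$ inside $O_{p'}(C_G(S^*))$ with $S^*=S\cap F^*(G)$, so $D$ centralizes the Sylow $p$-subgroup of $E(G)$ but not $E(G)$ itself. Similarly, $D=[D,A_F]$ is forced not by $\O_A(G)=1$ but by $G=\gp{A^G}$: the image of $G$ in $\Out(L_i)$ is generated by conjugates of the $p$-group image of $A_F$, so its $p'$-part must equal its commutator with $A_F$. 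Finally, the splitting of $(A_1\cdots A_r):A_F$ is obtained from a coprime Fitting decomposition $A=[A,H]\,C_A(H)$ relative to a $p'$-Hall subgroup $H$ of the Sylow normalizer acting fixed-point-freely on each $A_i$, not from general facts about outer-automorphism extensions of simple groups.
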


\noindent
{\bf Remark.}
After factoring out $\O_A(G)$ --- so that overbars may be omitted ---
the proof of Theorem~\ref{theorem3-1} shows that
$F^*(G) = L_1 \times \cdots \times L_r$, and
(\ref{main-isomorphism}) may also be written as
$$
\gp{A^G} \iso ((L_1 \times \cdots \times L_i)D \times
L_{i+1} \times \cdots \times L_j) A_F
\times (L_{j+1} \times \cdots \times L_r)
$$
where $L_1,\dots,L_i$ are the components of type $PSL$ or $PSU$
over fields of characteristic $\ne p$,
$L_{i+1},\dots,L_j$ are other groups listed in conclusion (i)
(but not linear or unitary),
and $L_{j+1},\dots,L_r$ are the components of types listed in (ii) to (v).
Furthermore, assume $G = \gp{A^G}$ and let
$A \le S \in Syl_p(G)$ and $S^* = S \cap F^*(G)$.
Then we may choose $D$ generically as
$[O_{p'}(C_G(S^*)),S]$, which is a $p'$-group normalized by $S$
and centralized by the Sylow $p$-subgroup
$S^*$ of $L_1 \cdots L_r$.

%{\bf Example.}
%
An easy example where both $D$ and $A_F$ are nontrivial is
provided at the outset of Section~\ref{examples}.

%\medskip
%\noindent
Conversely, observe that any finite group that has a
composition factor of one of the above types for $L_i$ possesses a
strongly closed $p$-subgroup that is not a Sylow $p$-subgroup of
its normal closure in $G$.
More detailed information about the structure of the Sylow $p$-subgroups
and their normalizers for the simple groups $L_i$ appearing in the
conclusion to this theorem is given from Proposition~\ref{sylow-structure}
through Corollary~\ref{normalizer-of-A-all} following.

Theorem~\ref{theorem3-1} is derived at the end of Section~\ref{theorem3-2-proof}
as a consequence of the next result, which is the minimal
configuration whose proof appears in Section~\ref{theorem3-2-proof}.

%%%%%%%%%%%%%%%%%%%%%%%%%%%%%%%%%%%%%%%%%%%%%%%%%%%%%%%%%%%%%%%%%%%%%%%%%%
\begin{theorem}
\label{theorem3-2}
%%%%%%%%%%%%%%%%%%%%%%%%%%%%%%%%%%%%%%%%%%%%%%%%%%%%%%%%%%%%%%%%%%%%%%%%%%
Assume the hypotheses of Theorem~\ref{theorem3-1}. Assume also
that $A$ is a minimal strongly closed subgroup of $G$, i.e., no
proper, nontrivial subgroup of $A$ is also strongly closed. Then
the conclusion of Theorem~\ref{theorem3-1} holds with the
additional results that $A$ is elementary abelian, $D = 1$,
$A_F = 1$, and $G$ permutes $L_1,\dots,L_r$
transitively (hence they are all isomorphic).
\end{theorem}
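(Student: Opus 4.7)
The plan is to exploit the minimality of $A$ by constructing, from Theorem~\ref{theorem3-1}, a sequence of natural subgroups of $A$ that are themselves strongly closed and then forcing each to be either trivial or all of $A$. Two preliminary facts, which should be contained in Lemma~\ref{basic-facts}, drive the argument: (a) any subgroup of $A$ that is characteristic in $A$ remains strongly closed in $S$ with respect to $G$; and (b) for any normal subgroup $N$ of $G$, the intersection $A \cap N$ is strongly closed, since $a \in A \cap N$ with $a^g \in S$ forces $a^g \in A$ by strong closure and $a^g \in N$ by normality. Applying (b) to $N = \O_A(G)$ gives $A \cap \O_A(G) \in \{1,A\}$ by minimality; the latter would make $A$ a Sylow $p$-subgroup of $\langle A^G\rangle$, contradicting the hypothesis, so $A \cap \O_A(G) = 1$ and the overbar map identifies $A$ with $\overline A$ (preserving minimality).

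Using (a), I would first show that $A$ is elementary abelian: the Frattini subgroup $\Phi(A)$ is a proper characteristic subgroup of the nontrivial $p$-group $A$, hence a proper strongly closed subgroup, hence trivial by minimality. Applying Theorem~\ref{theorem3-1} in $\overline G$ then yields $\langle \overline A^{\overline G}\rangle = (L_1 \times \cdots \times L_r)(D \cdot A_F)$ with each $A_i = \overline A \cap L_i$ nontrivial (inspecting the families (i)--(v)). By the remark following Theorem~\ref{theorem3-1}, $F^*(\overline G) = L_1 \times \cdots \times L_r$ is normal in $\overline G$, so (b) exhibits $A_1 \cdots A_r = \overline A \cap F^*(\overline G)$ as a nontrivial strongly closed subgroup of $\overline A$. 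Minimality forces $A_1 \cdots A_r = \overline A$, and combining this with the split extension $(A_1 \cdots A_r){:}A_F$ asserted in Theorem~\ref{theorem3-1} --- which, since $\overline A$ is abelian, amounts to the direct factorization $\overline A = (A_1 \cdots A_r) \times A_F$ --- immediately gives $A_F = 1$. The same theorem then produces $D = [D, A_F] = 1$.

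For transitivity, let $M$ be the product of the components in any single $\overline G$-orbit on $\{L_1,\ldots,L_r\}$. Then $M$ is normal in $\overline G$, and by (b) the intersection $\overline A \cap M = \prod_{L_i \le M} A_i$ is strongly closed and nontrivial, so minimality forces $\overline A \cap M = \overline A$. Distinct components in the direct product $F^*(\overline G)$ intersect trivially, so the existence of two different orbits would force $\overline A = 1$, contradicting $\overline A \ne 1$ from Theorem~\ref{theorem3-1}; hence the $\overline G$-action on $\{L_1,\ldots,L_r\}$ is transitive.

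The main potential obstacle is confirming fact (a): that a characteristic subgroup of a strongly closed $p$-subgroup is itself strongly closed. This is not purely formal and typically rests on Alperin-style control of fusion in $A$ by $N_G(A)$; however, it should be a standard entry in the paper's preliminary Lemma~\ref{basic-facts}, so I would invoke it as a black box. Every other step is a direct dichotomy argument pitting minimality against the structural decomposition delivered by Theorem~\ref{theorem3-1}.
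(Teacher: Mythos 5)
Your proposal has a fatal structural problem: it is circular relative to the paper's logic. You treat Theorem~\ref{theorem3-1} as an available black box and deduce the extra conclusions of Theorem~\ref{theorem3-2} from it, but in the paper the dependence runs the other way --- Theorem~\ref{theorem3-2} is the foundational ``minimal configuration'' result, proved first by a minimal-counterexample argument (reduce to $G$ simple with $A$ noncyclic and $S$ non-abelian, then eliminate or identify the alternating, classical, exceptional, characteristic-$p$ Lie type, and sporadic groups one family at a time using the Classification), and Theorem~\ref{theorem3-1} is then \emph{derived from} Theorem~\ref{theorem3-2} at the end of Section~\ref{theorem3-2-proof}; indeed the proof of Theorem~\ref{theorem3-1} explicitly invokes Theorem~\ref{theorem3-2} on each component $L_i$. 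The phrase ``Assume the hypotheses of Theorem~\ref{theorem3-1}'' grants you only the hypotheses, not the conclusion. The entire content of Theorem~\ref{theorem3-2} --- identifying which simple groups can occur and what the minimal strongly closed subgroups inside them are --- is the CFSG case analysis, and your proof contains none of it.

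There is also a concrete false step independent of the circularity: your fact (a), that a characteristic subgroup of a strongly closed $p$-subgroup is strongly closed, is not true and is not in Lemma~\ref{basic-facts} (which contains only the passage to quotients and the centralizing property). A Sylow subgroup $S$ is trivially strongly closed, yet $\Phi(S)$ need not be: the paper itself observes that in $L_3(3)$ with $S \iso 3^{1+2}$ the strong closure of $Z(S)=\Phi(S)$ is all of $S$. So your derivation of $\Phi(A)=1$ collapses; in the paper, elementary abelianness of the minimal strongly closed subgroup comes instead from the explicit structure results (Propositions~\ref{irreducible-action} and~\ref{normalizer-of-others-in-theorem3}) applied to the identified simple groups. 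By contrast, your fact (b) --- $A\cap N$ is strongly closed for $N\nor G$ --- is correct, and your deductions of $A\cap\O_A(G)=1$, $A_F=1$, $D=1$, and transitivity from the structural decomposition would be sound \emph{if} that decomposition were already established; but establishing it is precisely the theorem.
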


Some important consequences needed for our results
on cellularization of classifying spaces in \cite{Flores-Foote08} are the following.

%%%%%%%%%%%%%%%%%%%%%%%%%%%%%%%%%%%%%%%%%%%%%%%%%%%%%%%%%%%%%%%%%%%%%%%%%%
\begin{corollary}
\label{corollary3-3}
%%%%%%%%%%%%%%%%%%%%%%%%%%%%%%%%%%%%%%%%%%%%%%%%%%%%%%%%%%%%%%%%%%%%%%%%%%
Let $p$ be any prime, let $G$ be a finite group containing a
strongly closed $p$-subgroup $A$, let $S$ be a Sylow $p$-subgroup
of $G$ containing $A$, and
let $\overline G = G/\O_A(G)$.
Assume that $G$ is generated by the
conjugates of $A$.
Then $N_{\overline G}(\overline A)$ controls strong $\overline G$-fusion
in $\overline S$.
Furthermore, if $p \ne 3$ or if $\overline G$ does not have a
component of type $G_2(q)$ with $9 \divides q^2 -1$, then
$N_{\overline G}(\overline S)$ controls strong $\overline
G$-fusion in~$\overline S$.
\end{corollary}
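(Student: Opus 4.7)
The plan is to pass to $\overline G = G/\O_A(G)$, apply Theorem~\ref{theorem3-1} to pin down the explicit structure of $\overline G$, and then deduce both fusion-control statements via Alperin's fusion theorem.

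If $A$ is a Sylow $p$-subgroup of $G$ then $A\le\O_A(G)$, hence $\overline G = 1$ and the corollary is vacuous; so assume $A$ is proper in $S$, whence the hypotheses of Theorem~\ref{theorem3-1} hold for $\overline G$. Writing $S_i = \overline S\cap L_i$ and $A_i = \overline A\cap L_i$, one has $\overline G = (L_1\times\cdots\times L_r)(D\cdot A_F)$ with $F^*(\overline G) = L_1\times\cdots\times L_r$, together with $\overline S = (S_1\times\cdots\times S_r)A_F$ and $\overline A = (A_1\cdots A_r)A_F$.

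For the first assertion I would invoke Alperin's fusion theorem, which reduces the task to showing that for each essential $p$-subgroup $P\le\overline S$, the normalizer $N_{\overline G}(P)$ lies in $N_{\overline G}(\overline A)$. Because $D$, $A_F$ and each $L_i$ are normal in $\overline G$ (with $A_F$ normalizing each $L_i$ and each $S_i$), every such $P$ decomposes compatibly with the product structure, and one is reduced to showing that within each $L_i$ every essential $p$-subgroup $P_i\le S_i$ normalizes $A_i$. In family (i), $A_i$ is the unique homocyclic subgroup of $S_i$ of its prescribed exponent, so is characteristic in every overgroup in $S_i$. In families (ii)--(v), the detailed Sylow and normalizer data assembled in Proposition~\ref{sylow-structure}--Corollary~\ref{normalizer-of-A-all} identify $A_i$ as a canonical characteristic subgroup of $S_i$ (a center, a commutator subgroup, or a uniquely distinguished subgroup of its isomorphism type). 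Combined with strong closure of $\overline A$, this forces all conjugating elements into $N_{\overline G}(\overline A)$.

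For the second assertion, the hypothesis excludes exactly those $L_i\iso G_2(q)$ with $9\mid q^2-1$ at $p=3$, where fusion of $S_i$ within $L_i$ fails to be controlled by $N_{L_i}(S_i)$. In every remaining case, either $S_i$ is abelian (family (i) and the unexcluded case of (iii)), so Burnside's fusion theorem yields $N_{L_i}(S_i)$-control, or $L_i$ lies in families (ii), (iv), (v), where the explicit fusion structure of these small-rank simple groups also produces $N_{L_i}(S_i)$-control. Assembling these via the product decomposition and using that $A_F$ normalizes $\overline S$ and each $S_i$, Alperin's theorem reduces all global $\overline G$-fusion in $\overline S$ to $N_{\overline G}(\overline S)$; strong-fusion control follows automatically in the abelian cases and from the in-component strong-fusion control in the non-abelian ones. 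I expect the principal technical obstacle to be the explicit per-component verification in families (ii), (iv), (v) and in (iii) when $9\nmid q^2-1$ that $N_{L_i}(S_i)$ alone controls $p$-fusion --- precisely the analysis whose failure in $G_2(q)$ with $9\mid q^2-1$ dictates the exception in the hypothesis.
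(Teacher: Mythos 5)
Your overall strategy (pass to $\overline G$, apply Theorem~\ref{theorem3-1}, reduce to the components, isolate the $G_2(q)$ exception) matches the paper's in outline, but the execution has concrete gaps. First, the claim that ``$D$, $A_F$ and each $L_i$ are normal in $\overline G$'' is false for $D$ and $A_F$: only the $L_i$ are normal ($A_F$ is merely a subgroup of $\overline A$ normalizing $D$ and the $L_i$, and $D$ is an $S$-invariant $p'$-group). Consequently your assertion that every essential subgroup of $\overline S = (S_1\times\cdots\times S_r)A_F$ ``decomposes compatibly with the product structure'' is unjustified, and the reduction to fusion inside the individual $L_i$ --- which is where the real work lies --- is missing. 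The paper gets this reduction much more cheaply and without Alperin's theorem: it forms $Z = Z_1\times\cdots\times Z_r$ with $Z_i$ a minimal strongly closed subgroup of $A\cap L_i$, observes that $Z$ lies in $Z(\overline S)$ and is weakly closed, and concludes by a one-line Sylow/Burnside argument that $N_{\overline G}(Z)$ controls strong fusion; it then peels off $D$ (a $p'$-group centralizing $S^*$), passes to $M/O_{p'}(M)$, and proves $M = SC_M(A_F)$ to dispose of $A_F$. The first assertion then follows from the identity $N_{L_i}(Z_i) = N_{L_i}(A_i)$ supplied by Propositions~\ref{irreducible-action} and \ref{normalizer-of-others-in-theorem3} --- not from the claim that every essential subgroup normalizes $A_i$, which you assert but do not prove and which is not needed.

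Second, your case analysis for the last assertion goes wrong at exactly the delicate points. In the unexcluded case of family (iii) ($G_2(q)$ with $9\notdivides q^2-1$) the Sylow $3$-subgroup is extraspecial of order $27$, not abelian as you state; and in family (iv) the group $J_2$ has $N_{L}(S) \ne N_L(A)$ by Corollary~\ref{normalizer-of-A-all}, so control of fusion by $N_L(S)$ there is genuinely in question and is not settled by an appeal to ``explicit fusion structure.'' The paper's treatment of both of these cases rests on the fact that $S_i \iso 3^{1+2}$ has a central series $1 < Z_i < S_i$ all of whose terms are weakly closed, whence $N_{L_i}(S_i)$ controls strong fusion by \cite{GiSe}; this is the step your argument omits, and it is precisely the step that fails for $G_2(q)$ with $9\divides q^2-1$, where $\ord{S_i} > 27$ and the exception in the statement becomes unavoidable.
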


In Section~\ref{exotic} we demonstrate that the exceptional case to
the stronger conclusion in the last sentence of
Corollary~\ref{corollary3-3} is unavoidable,
even if we impose the condition that $\Omega_1(S) \le A$:
we construct examples of groups $G$ generated by conjugates of
a strongly closed subgroup $A$ containing $\Omega_1(S)$ and
$G/\O_A(G) \iso G_2(q)$ where $N_{\overline G}(\overline S)$ does
not control fusion in $\overline S$.
%
%% The groups $N_{L_i}(\overline S_i)$ are described in
%% Section~\ref{simple-groups}.

The next result facilitates computation of $N_G(A)$ in groups satisfying
the conclusion to the preceding corollary.

%%%%%%%%%%%%%%%%%%%%%%%%%%%%%%%%%%%%%%%%%%%%%%%%%%%%%%%%%%%%%%%%%%%%%%%%%%
\begin{corollary}
\label{corollary3-4}
%%%%%%%%%%%%%%%%%%%%%%%%%%%%%%%%%%%%%%%%%%%%%%%%%%%%%%%%%%%%%%%%%%%%%%%%%%
Assume the hypotheses of preceding corollary and
the notation of Theorem~\ref{theorem3-1}.
For each $i$ let $C_i = C_{\overline G}(A_F) \cap N_{L_i}(A_i)$
and $S_i = \overline S \cap L_i$.  Then
$$
N_{\overline G}(\overline A)/\, \overline A
= (S_1 C_1 / A_1) \times
(S_2 C_2/ A_2)
\times \cdots \times
(S_r C_r/ A_r) .
$$
In particular, if $L_i$ is a component on which $A_F$ acts trivially
--- which is the case for all components in conclusions (ii) to (v)
of Theorem~\ref{theorem3-1} ---
the $i^{\text{th}}$ direct factor above may be replaced by just
$N_{L_i}(A_i)/A_i$ (and this applies to all factors if $A_F = 1$).
%%  Furthermore, if a component $L_i$ in $\overline G$ is not of type
%%  $G_2(q)$ with $9 \divides q^2-1$, then
%%  the corresponding $i^{\text{th}}$ direct factor
%%  $N_{L_i}(A_i)/ A_i$ above
%%  may be replaced by $N_{L_i}(S_i)/ A_i$,
%%  where $S_i = \overline S \cap L_i$ is the Sylow
%%  $p$-subgroup of $L_i$ containing $A_i$.
\end{corollary}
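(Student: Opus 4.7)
The plan is to verify the asserted identity of subgroups of $N_{\overline G}(\overline A)/\overline A$ by two inclusions. Passing to $\overline G = G/\O_A(G)$, I would drop overbars and use the notation of Theorem~\ref{theorem3-1}: $F^*(G) = L_1 \times \cdots \times L_r$, $A = (A_1\cdots A_r)\rtimes A_F$ with $A_i = A \cap L_i$, and by the Remark (since $G = \langle A^G\rangle$) also $G = F^*(G)\cdot D\cdot A_F$.

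For the inclusion $\supseteq$, fix $i$ and show $S_iC_i \le N_G(A)$. Strong closure of $A$ makes $A \nor S$, so $S_i = S \cap L_i \le S \le N_G(A)$. The subgroup $C_i \le L_i$ centralizes every $L_j$ (and in particular every $A_j$) with $j \ne i$ because distinct components of $F^*(G)$ commute, and by definition it normalizes $A_i$ and centralizes $A_F$; hence $C_i$ normalizes $A = (A_1\cdots A_r)A_F$. Because $S_iC_i \le L_i$ and $A \cap L_i = A_i$, one gets $S_iC_i \cap A = A_i$ and thus $S_iC_i A/A \cong S_iC_i/A_i$. For $i \ne j$, the subgroups $S_iC_i$ and $S_jC_j$ commute and have trivial intersection modulo $A$ (seen by projecting onto $L_i$), so the internal product $\prod_i (S_iC_i/A_i)$ is direct and embeds in $N_G(A)/A$.

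For the inclusion $\subseteq$, let $g \in N_G(A)$. I would first reduce to the case $g \in \bigcap_i N_G(L_i)$ by arguing that any permutation of the set $\{L_i\}$ induced by $g$ corresponds via $A_i = A \cap L_i$ to a permutation of the direct factors of $A \cap F^*(G)$ which, using Corollary~\ref{corollary3-3} and the explicit outer-automorphism description of the $L_i$ from Theorem~\ref{theorem3-1}, is realizable in $\prod_i S_iC_i$ modulo $A$. With $g$ fixing each $L_i$, write $g = \ell_1 \cdots \ell_r \cdot d \cdot h$ with $\ell_i \in L_i$, $d \in D$, $h \in A_F \le A$. The factor $h$ is absorbed into $A$; the generic description $D = [O_{p'}(C_G(S^*)),S]$ from the Remark gives that $D$ centralizes $S^*$ and hence each $A_i$, and the condition $g \in N_G(A)$ forces $d \in C_G(A_F) \cap \prod_i N_{L_i}(A_i) \subseteq \prod_i C_i$. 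Similarly $\ell_i \in N_{L_i}(A_i) \cap C_G(A_F) = C_i$, with its Sylow $p$-component absorbed into the $S_i$-factor. Hence $gA \in \prod_i (S_iC_i/A_i)$.

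The main obstacle I anticipate is precisely the first step of the reverse inclusion: showing that permutations of components by elements of $N_G(A)$ (which do occur when the $L_i$ are $G$-conjugate) can be realized inside $\prod(S_iC_i/A_i)$ modulo $A$. This is where the fusion-control from Corollary~\ref{corollary3-3} and the detailed action of $A_F$ as field or diagonal automorphisms (conclusion (i) of Theorem~\ref{theorem3-1}) or as trivial automorphisms (conclusions (ii)–(v)) must be used carefully. The last sentence of the corollary is immediate: when $A_F$ acts trivially on $L_i$, we have $L_i \le C_G(A_F)$, so $C_i = N_{L_i}(A_i)$, and the $i^{\text{th}}$ direct factor simplifies as claimed.
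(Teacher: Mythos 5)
Your forward inclusion and the final remark (when $A_F$ acts trivially on $L_i$ we get $L_i\le C_{\overline G}(A_F)$, so $C_i=N_{L_i}(A_i)$) are fine, but the reverse inclusion has genuine gaps, and the ``main obstacle'' you flag is not the real one. The permutation issue is vacuous: the proof of Theorem~\ref{theorem3-1} establishes that every component is normal in $\overline G$, and this is already built into the statement you are quoting ($D$ and $A_F$ normalize each $L_i$, and $\overline G=(L_1\times\cdots\times L_r)(D\cdot A_F)$ since $\overline G=\gp{\overline A^{\overline G}}$). Moreover your proposed fix could not work even in principle, since $\prod_i S_iC_i$ normalizes each $L_i$ and so cannot realize a nontrivial permutation of components; if such permutations occurred the asserted direct-product formula would simply be false.

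The two real difficulties are the ones you pass over. First, to place the $D$-part $d$ of $g$ inside $\prod_i C_i$ you must show $d$ lies in $L_1\cdots L_r$ at all, i.e.\ that elements of $N_{\overline G}(\overline A)$ induce no nontrivial outer diagonal automorphisms; ``$D$ centralizes $S^*$'' does not give this. The paper derives it from the fixed-point-free action of $A_F$ on the abelian $p'$-quotient $D/(D\cap L_1\cdots L_r)$ (a consequence of $D=[D,A_F]$): if $d$ normalizes $A$ then $[A_F,d]\le A\cap D=1$, forcing $d\in L_1\cdots L_r$. Second, your claim that $\ell_i\in N_{L_i}(A_i)\cap C_{\overline G}(A_F)=C_i$ is false as stated: from $g\in N_{\overline G}(\overline A)$ one only gets $[A_F,\ell_i]\le A_i$, not $[A_F,\ell_i]=1$ (already elements of $S_i$ typically fail to centralize $A_F$ when $A_F$ acts as field automorphisms). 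What is actually needed is that every element of $N_{L_i}(A_i)$ factors as an element of $S_i$ times an element of $C_i$ modulo a $p'$-part centralizing $A$; this is precisely the paper's identity $M=SC_M(A_F)$ (equation (\ref{field-action})), proved by showing that, modulo $O_{p'}(M)$, $A_F$ commutes with a $p'$-Hall subgroup $H_i$ of $N_{L_i}(S_i)$ because $[A_F,S_i]\le A_i\le\Phi(S_i)$ and $H_i$ acts faithfully on $S_i$. Your phrase about the ``Sylow $p$-component absorbed into the $S_i$-factor'' gestures at this but supplies no argument. Note also that the paper's own route is structurally different: it works inside the fusion-controlling subgroup $M=(N_{L_1}(Z_1)\times\cdots\times N_{L_r}(Z_r))A_F$ from the proof of Corollary~\ref{corollary3-3} and shows $N_{\overline G}(\overline A)=SC_M(A_F)$ as subgroups, from which the factorization falls out without any element-by-element bookkeeping.
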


%%  In the proof of the preceding corollary we shall see that, by a Frattini Argument,
%%  $A_F$ normalizes some $p'$-Hall subgroup $H_i$ of $N_{L_i}(S_i)$, and
%%  then $S_i C_i = S_i C_{H_i}(A_F)$.
%%  Furthermore, we shall see that $C_{H_i}(A_F)$ covers
%%  $N_{L_i}(S_i)/S_i C_{L_i}(S_i)$, which is the group that
%%  effects the $p'$-fusion on $S_i$.

\medskip

The proof of Theorem~\ref{theorem3-2} relies on the Classification
of Finite Simple Groups. We reduce to the case where a minimal
counterexample, $G$, is a simple group having a strongly closed
$p$-subgroup $A$ that is properly contained in a non-abelian Sylow
$p$-subgroup $S$ of $G$. The remainder of the proof involves
careful investigation of the families of simple groups to
determine precisely when this happens.

We note that ``most'' simple groups do possess a strongly closed
$p$-subgroup that is proper in a Sylow $p$-subgroup, that is,
conclusion (i) of Theorem~\ref{theorem3-1} is
the ``generic obstruction'' in the
following sense. Let $\L_n(q)$ denote a simple group of Lie type
and $BN$-rank $n$ over the finite field $\FF_q$ with $(q,p) = 1$.
As we shall see in Section~\ref{preliminary-results},
for all but the finitely many primes dividing the order of
the Weyl group of the untwisted version of
$\L_n(q)$ the Sylow $p$-subgroups of $\L_n(q)$
are homocyclic abelian. Furthermore, the order of $\L_n(q)$ can be
expressed as a power of $q$ times factors of the form
$\Phi_m(q)^{r_m}$ for various $m, r_m \in \NN$, where $\Phi_m(x)$
is the $m^{\text{th}}$ cyclotomic polynomial. Then by
Proposition~\ref{sylow-structure} below,
if $m_0$ is the multiplicative order of $q$ (mod~$p$), then
$p$ divides $\Phi_{m_0}(q)$
and the abelian Sylow $p$-subgroup of $\L_n(q)$ is homocyclic of
rank $r_{m_0}$ and exponent $\ord{\Phi_{m_0}(q)}_p$. In particular it is
not elementary abelian whenever $p^2 \divides \Phi_{m_0}(q)$. For
example, this is the case in the groups $PSL_{n+1}(q)$ whenever $p
> n+1$ and $p^2$ divides $q^m - 1$ for some $m \le n+1$.  Thus for
fixed $n$ and all but finitely many $p$, this can always be
arranged by taking $q$ suitably large.

The overall organization of the paper is as follows: Section~2
contains preliminary results, including detailed information
on the Sylow structure and Sylow normalizers
of simple groups containing strongly closed $p$-subgroups.
The main results are proved in Section~3;
Theorem~\ref{theorem3-2} is proved first and
Theorem~\ref{theorem3-1} and its corollaries are derived
at the end of this section as consequences of it.
Section~4 provides interesting examples of groups, $G$,
possessing strongly closed subgroups, $A$; and with an eye to
applications in \cite{Flores-Foote08} we also describe $N_G(A)$ and
$N_G(S)$ for these cases of $G$.
More explicitly, we describe these first for $G$ simple,
and then for split extensions,
and finally for certain nonsplit extensions of simple groups.
The latter are very illuminating in the sense that they give an
alluring glimpse of what ``should be'' the $B\Z /p$-cellularization
of more general objects.

%%%%%%%%%%%%%%%%%%%%%%%%%%%%%%%%%%%%%%%%%%%%%%%%%%%%%%%%%%%%%%%%%%%%%%%%%%%%
\section{Preliminary Results}
\label{preliminary-results}
%%%%%%%%%%%%%%%%%%%%%%%%%%%%%%%%%%%%%%%%%%%%%%%%%%%%%%%%%%%%%%%%%%%%%%%%%%%%
The special case when $A$ has order $p$ has already been treated
in \cite[Proposition 7.8.2]{GLS3}. It is convenient to quote this
special case, although with extra effort our arguments could be
reworded to independently subsume it.

%%%%%%%%%%%%%%%%%%%%%%%%%%%%%%%%%%%%%%%%%%%%%%%%%%%%%%%%%%%%%%%%%%%%%%%%%%
\begin{proposition}
\label{proposition3-4}
%%%%%%%%%%%%%%%%%%%%%%%%%%%%%%%%%%%%%%%%%%%%%%%%%%%%%%%%%%%%%%%%%%%%%%%%%%
If $K$ is simple and $G = AK$ is a subgroup of $\Aut(K)$ such that
$A$ is strongly closed and $\ord A = p$, then $A \le K = G$ and
either the Sylow $p$-subgroups of $G$ are cyclic, or $G$ is
isomorphic to  $U_3(p)$ or one of the simple groups listed in
conclusions (iii) and (iv) of Theorem~\ref{theorem3-1}.
\end{proposition}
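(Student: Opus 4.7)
The plan is to argue in two stages: first reduce to the case $A\le K$ (equivalently $G=K$), and then invoke the Classification of Finite Simple Groups to pin down the admissible $K$.

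For Stage 1, suppose for contradiction $A\not\le K$. Since $|A|=p$ is prime, $A\cap K=1$, so $|G:K|=p$. Let $a$ generate $A$, choose a Sylow $p$-subgroup $S$ of $G$ containing $a$, and set $S_K=S\cap K\in Syl_p(K)$, so $S=\langle a\rangle S_K$. Because $G/K$ is cyclic of order $p$, every $G$-conjugate of $a$ lies in the coset $aK$; hence $a^G\cap S\subseteq aS_K$, and since $\langle a\rangle\cap aS_K=\{a\}$, strong closure forces $a^G\cap S=\{a\}$. The $Z^*$-theorem (Glauberman for $p=2$, its CFSG-based analog for $p$ odd) then places $a$ in $Z^*(G)$, the preimage of $Z(G/O_{p'}(G))$. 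Combined with $C_G(K)=1$ (since $K=\text{Inn}(K)$ is non-abelian simple in $\Aut(K)$) and $O_{p'}(G)\cap K=1$, this gives $[a,K]\le O_{p'}(G)\cap K=1$, so $a\in C_G(K)=1$, the required contradiction.

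For Stage 2, we have $A\le K=G$. Strong closure gives $S\le N_G(A)$, and coprimality of $|S|$ and $|\Aut(A)|=p-1$ yields $A\le Z(S)$. If $S$ is cyclic we land in the first alternative. Otherwise we run through the simple groups. For $K=A_n$ alternating, a non-cyclic Sylow forces $n\ge 2p$, and disjoint $p$-cycles yield a rank $\ge 2$ elementary abelian subgroup on whose nontrivial elements $N_K(S)$ acts transitively, breaking strong closure. For $K$ of Lie type in defining characteristic $p$, the transitive $\FF_q^*$-action of the torus on the highest root subgroup $Z(S)$ forces $q=p$, and fusion from other root subgroups via the Weyl group then forces Lie rank $1$, giving $K\cong U_3(p)$. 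For $K$ of Lie type in cross characteristic, $S$ is a homocyclic $\Phi_d$-torus and strong closure of a central order-$p$ subgroup is controlled by the relative Weyl group acting on the elementary abelian quotient of $S$; a uniform case-by-case analysis singles out $G_2(q)$ with $p=3$ and $(q,3)=1$. For sporadic $K$, a direct inspection of Sylow $p$-normalizer structures (as tabulated in the ATLAS) isolates $J_2$ $(p=3)$, $Co_3$, $Co_2$, $HS$, $Mc$ $(p=5)$, and $J_4$ $(p=11)$.

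The main obstacle is the cross-characteristic Lie type case in Stage 2: for each Lie type and each $d$ with $p\mid \Phi_d(q)$ one must identify the Sylow $\Phi_d$-torus, compute its relative Weyl group, and determine when the action fixes a one-dimensional $\FF_p$-subspace of the elementary abelian quotient of $S$ in a manner compatible with global $K$-fusion. The generic answer is no; narrowing the exceptional survivor to $G_2(q)$ with $p=3$ requires uniform handling of inner, diagonal, and graph/field outer automorphism interference, which is precisely why the Stage-$1$ reduction forcing $A\le K$ (so that outer automorphisms play no role in Stage 2) is essential before entering the case analysis.
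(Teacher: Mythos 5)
The paper offers no proof of this proposition at all: it is quoted directly from \cite[Proposition 7.8.2]{GLS3}, so any argument you supply is necessarily your own. Your two-stage skeleton (force $A\le K$, then classify $K$ by CFSG) is indeed the shape of the proof in that source, but as written there is a genuine gap in Stage 1 and Stage 2 is asserted rather than proved.

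In Stage 1 the step $[a,K]\le O_{p'}(G)\cap K=1$ silently uses that $O_{p'}(G)\cap K$ is a \emph{proper} normal subgroup of the simple group $K$, which requires $p\divides \ord K$. If $p\notdivides\ord K$ then $K=O_{p'}(G)$ and the argument collapses --- and the conclusion $A\le K$ is then genuinely false: take $K\iso PSL_2(32)$, $p=5$, and $A$ a group of field automorphisms; $A$ is a Sylow $5$-subgroup of $G=AK$, hence trivially strongly closed, yet $A\cap K=1$. So this case must be split off explicitly (it falls under ``Sylow $p$-subgroups of $G$ are cyclic'' but not under ``$A\le K=G$''; note the paper only ever invokes the proposition with $A$ already inside a simple or quasisimple group). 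A second caution: within this paper the odd-prime $Z^*$-theorem (Proposition~\ref{Zp-star-theorem}) is presented as a \emph{consequence} of the proposition you are proving, so in Stage 1 you must cite it as an independent CFSG-based theorem, not via the paper, to avoid circularity.

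Stage 2 is where essentially all of the mathematical content lives, and your treatment is a description of what must be done rather than a proof. The reduction $A\le Z(S)$ and the disposal of cyclic and abelian Sylow subgroups (via Proposition~\ref{irreducible-action}) are fine, but beyond that the mechanisms you cite are not correct as stated: $N_K(S)$ need not act transitively on the nonidentity elements of a rank-$2$ elementary abelian subgroup of $S\le A_n$ (what one actually uses is global $K$-fusion of products of disjoint $p$-cycles); in defining characteristic $Z(S)$ need not be a single root subgroup and the torus character on it need not be surjective in every type; and the cross-characteristic case --- determining exactly when $Z(S)$ of order $p$ is strongly closed, which is where $G_2(q)$, ${}^3D_4(q)$ and the sporadic exceptions must be separated --- is precisely the case-by-case analysis that constitutes the theorem, and ``a uniform case-by-case analysis singles out $G_2(q)$'' is a statement of the goal, not an argument. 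As it stands the proposal is an accurate roadmap to \cite[7.8.2]{GLS3}, not a proof of it.
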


The authors of this result remark that an immediate consequence of
this is the odd-prime version of Glauberman's celebrated
$Z^*$-Theorem.
%%%%%%%%%%%%%%%%%%%%%%%%%%%%%%%%%%%%%%%%%%%%%%%%%%%%%%%%%%%%%%%%%%%%%%%%%%
\begin{proposition}
\label{Zp-star-theorem}
%%%%%%%%%%%%%%%%%%%%%%%%%%%%%%%%%%%%%%%%%%%%%%%%%%%%%%%%%%%%%%%%%%%%%%%%%%
If an element of odd prime order $p$ in any finite group $X$ does
not commute with any of its distinct conjugates then it lies in
$Z(X/O_{p'}(X))$.
\end{proposition}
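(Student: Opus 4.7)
The plan is to deduce this statement from Proposition~\ref{proposition3-4} (equivalently, from Theorem~\ref{theorem3-1}) by showing that the isolation hypothesis on $x$ forces $A=\langle x\rangle$ to be strongly closed in any Sylow $p$-subgroup of $X$ containing it, and then ruling out each structural possibility allowed by the classification.

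First I would reduce to $O_{p'}(X)=1$: the isolation hypothesis descends to $\overline X=X/O_{p'}(X)$ because, if $\overline x^{\overline g}$ were a distinct $\overline X$-conjugate of $\overline x$ commuting with it, then $[x,x^g]\in O_{p'}(X)$, and an application of Schur--Zassenhaus to $\langle x,x^g\rangle\cdot O_{p'}(X)$---whose image modulo $O_{p'}(X)$ is abelian---would produce some $h\in X$ with $x^h\ne x$ literally centralizing $x$, contradicting isolation in $X$. Next, for any Sylow $p$-subgroup $S$ of $X$ containing $x$, a short Frattini-style argument gives $x\in Z(S)$: if $C_S(x)<S$ then $C_S(x)<N_S(C_S(x))$, and any $t\in N_S(C_S(x))\setminus C_S(x)$ sends $x$ into $C_S(x)^t=C_S(x)\le C_X(x)$, whence $x^t=x$ by isolation, placing $t\in C_S(x)$---a contradiction. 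Hence every $X$-conjugate of $x$ in $S$ commutes with $x$ and so equals $x$, so $A$ is strongly closed.

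Now I would invoke Theorem~\ref{theorem3-1}. If $A$ is a Sylow $p$-subgroup of $N:=\langle A^X\rangle$, each $g\in N_X(A)$ satisfies $x^g\in A\le C_X(x)$, so $x^g=x$ by isolation; thus $N_X(A)=C_X(A)$, and Burnside's normal $p$-complement theorem applied to $N$, combined with $O_{p'}(N)\le O_{p'}(X)=1$, yields $N=A\triangleleft X$. Every conjugate of $x$ then equals $x$, so $x\in Z(X)$, as desired. Otherwise $A$ is not a Sylow $p$-subgroup of $N$; since $|A|=p$ the subgroup $\O_A(X)$ is a $p'$-group, hence trivial, so overbars may be omitted. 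The decomposition of Theorem~\ref{theorem3-1} then collapses to $r=1$, $A_F=1$, $|A\cap L_1|=p$ with $L:=L_1$ simple, and Proposition~\ref{proposition3-4} asserts that $L$ has cyclic Sylow $p$-subgroup, or $L\cong U_3(p)$, or $L$ is one of the simple groups in conclusions (iii) or (iv) of Theorem~\ref{theorem3-1}.

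The hard part will be ruling out each of these residual configurations. In every one the detailed Sylow-normalizer data in Propositions~\ref{sylow-structure}--\ref{normalizer-of-A-all} show that $N_L(A)/C_L(A)$ acts nontrivially on $A$: for the cyclic-Sylow case this follows from simplicity of $L$ together with Burnside's theorem (the $p'$-quotient $N_L(P)/C_L(P)$ must be nontrivial and injects onto $\Aut(A)\cong(\Z/p)^\times$); for $U_3(p)$, $G_2(q)$, the Ree groups and the listed sporadics it is part of their explicit normalizer structure. A nontrivial such automorphism sends $x$ to a distinct power $x^k\in A\le C_X(x)$, directly violating the isolation hypothesis and closing the contradiction.
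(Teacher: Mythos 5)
The first thing to note is that the paper itself offers no proof of this proposition: it is quoted from \cite{GLS3}, where its authors observe that it is an immediate consequence of Proposition~\ref{proposition3-4}, and in the logical architecture of this paper it sits strictly \emph{upstream} of the main theorems. That is exactly where your argument breaks down. To handle the case where $A=\langle x\rangle$ is not Sylow in $\langle A^X\rangle$ you invoke Theorem~\ref{theorem3-1}; but Theorem~\ref{theorem3-1} is derived from Theorem~\ref{theorem3-2}, whose proof uses Lemma~\ref{basic-facts}(2) repeatedly (for instance to get $A\cap F^*(G)\ne 1$ in Lemma~\ref{G-is-simple}), and the proof of Lemma~\ref{basic-facts}(2) explicitly invokes the odd-prime $Z^*$-theorem --- the very statement you are proving. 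So the derivation is circular. The only admissible input is the externally quoted Proposition~\ref{proposition3-4}, and to apply it you must reduce to the almost-simple configuration $G=AK\le\Aut(K)$ by hand, without Lemma~\ref{basic-facts}(2). Such a reduction is genuinely available for an isolated element of order $p$: with $O_{p'}(X)=1$ and $X=\langle x^X\rangle$, your own observation $x\in Z(S)$ gives $[x,O_p(X)]=1$ and hence $O_p(X)\le Z(X)$; $x$ normalizes each component $L_i$ because it centralizes the nontrivial group $S\cap L_i$; and if $x$ centralized $E(X)$ it would lie in $Z(F^*(X))\le Z(X)$. But none of this is in your write-up, and it cannot be replaced by an appeal to Theorem~\ref{theorem3-1}.

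The remainder of your argument is essentially sound and matches what the derivation from Proposition~\ref{proposition3-4} must look like: the descent of isolation to $X/O_{p'}(X)$ via Schur--Zassenhaus, the deduction $x\in Z(S)$ and hence strong closure of $\langle x\rangle$ (you should also account for conjugates of the powers $x^i$, which works because $(x^i)^g\in S$ forces $x^g=\bigl((x^i)^g\bigr)^{i'}$ to commute with $x$), the Burnside argument when $A$ is Sylow in its normal closure, and the closing observation that in each residual configuration some element conjugates $x$ to a distinct power $x^k\in C_X(x)$, violating isolation. One small correction to that last step: Proposition~\ref{proposition3-4} does not list the Ree groups among its conclusions, so they need not be ruled out there.
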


We record some basic facts about strongly closed subgroups (the second of
which relies on the odd-prime $Z^*$-Theorem).

%%%%%%%%%%%%%%%%%%%%%%%%%%%%%%%%%%%%%%%%%%%%%%%%%%%%%%%%%%%%%%%%%%%%%%%%%%
\begin{lemma}
\label{basic-facts}
%%%%%%%%%%%%%%%%%%%%%%%%%%%%%%%%%%%%%%%%%%%%%%%%%%%%%%%%%%%%%%%%%%%%%%%%%%
For $p$ any prime let $A$ be a strongly closed $p$-subgroup of
$G$.
\begin{enumerate}

\item[(1)]
If $N$ is any normal subgroup of $G$ then $AN/N$ is a strongly
closed $p$-subgroup of $G/N$.

\item[(2)]
If $A$ normalizes a subgroup $H$ of $G$ with $O_{p'}(H) = 1$ and
$A \cap H = 1$ then $A$ centralizes $H$.
\end{enumerate}
\end{lemma}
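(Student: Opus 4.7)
The plan for part (1) is to lift representatives. Choose a Sylow $p$-subgroup $S \supseteq A$ of $G$, so $\bar S := SN/N$ is a Sylow $p$-subgroup of $\bar G := G/N$ containing $\bar A := AN/N$. Given $\bar a \in \bar A$ and $\bar g \in \bar G$ with $\bar s := \bar a^{\bar g} \in \bar S$, lift to $a \in A$, $g \in G$, $s \in S$ with $a^g = sn$ for some $n \in N$. Since $a^g$ is a $p$-element of $SN$ and $S \in Syl_p(SN)$, there is $m \in SN$ with $(a^g)^m \in S$; strong closure of $A$ in $S$ with respect to $G$ then gives $a^{gm} \in A$. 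Writing $m = s_1 n_1$ with $s_1 \in S$, $n_1 \in N$ and reducing modulo $N$, I obtain $\bar s^{\bar s_1} \in \bar A$. Since $A$ is strongly closed in $S$ it is normal in $S$, whence $\bar A \triangleleft \bar S$, and conjugating back by $\bar s_1^{-1}$ produces $\bar s \in \bar A$.

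For part (2), set $X = AH$. First, $O_{p'}(X) = 1$: any normal $p'$-subgroup $N$ of $X$ has $N \cap H \leq O_{p'}(H) = 1$, so $NH/H$ is a $p'$-subgroup of the $p$-group $X/H \cong A$, forcing $N = 1$; and $A$ is strongly closed in $X$ by taking any Sylow $p$-subgroup $S_X$ of $X$ containing $A$ and extending to a Sylow of $G$. The pivotal elementary observation is that for $a \in A$ and $h \in H$ with $a^h \in A$, one has $a^h a^{-1} = h^{-1}aha^{-1} \in H \cap A = 1$, so $a^h = a$. Two consequences follow: (i) the $X$-fusion of any two elements of $A$ coincides with $A$-fusion (writing an $X$-conjugator as $x = \alpha h$ with $\alpha \in A$, $h \in H$, the relation $(a_1^\alpha)^h = a_2 \in A$ forces $a_2 = a_1^\alpha$); and (ii) $A$ centralizes any $A$-invariant Sylow $p$-subgroup $T$ of $H$ (which exists since $A$ is a $p$-group acting on the $p'$-sized set of Sylow $p$-subgroups), because for $t \in T$, $a^t \in \langle a, t\rangle \leq AT = S_X$, so $a^t \in A$ by strong closure and $a^t = a$ by the key observation.

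The main step applies the $Z^*$-theorem (Proposition~\ref{Zp-star-theorem}) to each element $a \in A$ of order $p$ inside $X$: either $a \in Z(X)$, whence $a$ centralizes $H$, or $a$ commutes with a distinct $X$-conjugate $a^x$. In the second case, $\langle a, a^x\rangle$ is elementary abelian of order $p^2$; I Sylow-conjugate it into $S_X$, apply strong closure together with the fusion identification (i), and obtain two distinct commuting $A$-conjugates of $a$ inside $A$. For abelian $A$ this is an immediate contradiction, placing every order-$p$ element of $A$ in $Z(X)$; I extend to arbitrary $a \in A$ by induction on $|A|$, noting that $\Omega_1(A) \leq Z(X)$ is normal in $X$ and, by part~(1), the image $A/\Omega_1(A)$ remains strongly closed in $X/\Omega_1(A)$ while $H$ is essentially unchanged. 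For possibly non-abelian $A$, a reduction by induction on $|H|$ along $A$-invariant minimal normal subgroups $M \triangleleft X$ contained in $H$ (using $O_{p'}(M) \leq O_{p'}(H) = 1$) isolates the case of simple $H$; for $H$ simple non-abelian with $p \mid |H|$, observation (ii) gives $C_H(A) \supseteq T$, whose normal core in $H$ has $p'$-index and hence equals $H$ by simplicity, yielding $C_H(A) = H$. The main obstacle will be the non-abelian $A$ case, where the ``distinct commuting $A$-conjugate'' scenario is not itself a contradiction and must be combined with coprime-action analysis on the quotients $H/M$ to close the argument.
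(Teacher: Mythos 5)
Your part (1) is correct and is essentially the paper's one-line argument written out in full. The problem is in part (2), and it is the one you flag yourself: your argument is only complete when $A$ is abelian. The preliminary reductions are all sound --- $O_{p'}(AH)=1$, the key observation that $a\in A$, $h\in H$, $a^h\in A$ forces $a^h=a$, the identification of $X$-fusion in $A$ with $A$-fusion, and the deduction that a non-isolated element $a$ of order $p$ in $A$ yields two \emph{distinct} commuting $A$-conjugates of $a$ inside $A$. But the route you sketch for non-abelian $A$ does not close. Passing to a minimal normal subgroup $M$ of $X$ inside $H$ fails because $M$ may be an elementary abelian $p$-group, in which case $O_{p'}(H/M)$ need not be trivial and the inductive hypothesis is lost; and the claim that the normal core of $C_H(A)$ in a simple group $H$ has $p'$-index because $C_H(A)$ contains a Sylow $p$-subgroup is false --- the core of a subgroup containing a Sylow $p$-subgroup is frequently trivial (e.g.\ $A_4<A_5$ with $p=2$), so ``equals $H$ by simplicity'' is unjustified. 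As written, part (2) is not proved for non-abelian $A$, and the lemma is needed in that generality.

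The repair is small and turns your abelian argument into the general one (it is what the paper's terse proof intends). Do not try to show that \emph{every} element of order $p$ in $A$ is isolated; apply your dichotomy only to a single nontrivial $a\in\Omega_1(Z(A))$. For such an $a$ your conclusion ``two distinct commuting $A$-conjugates of $a$'' is already absurd, since $a^A=\{a\}$. Hence $a$ is isolated, so $a\in Z(X)$ by the $Z^*$-theorem (Glauberman's for $p=2$, Proposition~\ref{Zp-star-theorem} for odd $p$) together with $O_{p'}(X)=1$. Now pass to $X/\gp a$: by part (1) the image of $A$ is strongly closed, $H\gp a/\gp a\iso H$ has trivial $O_{p'}$, and its intersection with the image of $A$ is trivial; induction on $\ord A$ then gives $[A,H]\le H\cap\gp a=1$. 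This is exactly the paper's reduction to the case $C_A(H)=1$, where an element of order $p$ central in $A$ is isolated, hence central in $X$ --- no case split on the structure of $A$ is needed.
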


\begin{proof}
In part (1) let $A \le S \in Syl_p(G)$. This result follows
immediately from the definition of strongly closed applied in the
Sylow $p$-subgroup $SN/N$ of $G/N$ together with Sylow's Theorem.
The proof of (2) is the same as for $p=2$ since, as noted earlier,
the $Z^*$-Theorem holds also for odd primes: by induction reduce
to the case where $G = AH$ and $C_A(H) = 1$. Then any element of
order $p$ in $A$ is isolated, hence lies in the center.
\end{proof}

The next few results gather facts about the simple groups appearing in
the conclusions to Theorems~\ref{theorem3-1-p-equal-2} and \ref{theorem3-1}.

The cross-characteristic Sylow structures of the simple groups of
Lie type are beautifully described in \cite[Section 10]{GL}
and reprised in \cite[Section 4.10]{GLS3}. Let $\L(q)$ denote a
universal Chevalley group or twisted variation over the field
$\FF_q$. (In the notation of \cite{GLS3}, $\L(q) = {}^d L(q)$,
where $d = 1,2,3$ corresponds to the untwisted, Steinberg twisted,
or Suzuki-Ree twisted variations respectively). Let $W$ denote the
Weyl group of the untwisted group corresponding to $\L(q)$. Except
for some small order exceptions, $\L(q)$ is a quasisimple group;
for example $\mathcal A_{\ell}(q) \iso SL_{\ell+1}(q)$ and
${}^2A_\ell(q) \iso SU_{\ell+1}(q)$. There is a set $\O(\L(q))$ of
positive integers, and ``multiplicities'' $r_m$ for each $m \in
O(\L(q))$, such that
$$
\ord{\L(q)} = q^N \prod_{m \in \O(\L(q))} (\Phi_m(q))^{r_m}
$$
where $\Phi_m(x)$ is the cyclotomic polynomial for the
$m^{\text{th}}$ roots of unity.

Let $p$ be an odd prime not dividing $q$ and assume $S$ is a
nontrivial Sylow $p$-subgroup of $\L(q)$.
Let $m_0$ be the smallest element of $\O(\L(q))$ such that
$p \divides \Phi_{m_0}(q)$.
Let
\begin{equation}
\label{weyl-order}
\mathcal W = \{ m \in \O(\L(q)) \mid m = p^a m_0, \ a \ge 1 \}
\qquad \text{and} \qquad
b = \sum_{m \in \mathcal W} r_m
\end{equation}
where $b = 0$ if $\mathcal W = \emptyset$.
The main structure theorem is as follows.

%%%%%%%%%%%%%%%%%%%%%%%%%%%%%%%%%%%%%%%%%%%%%%%%%%%%%%%%%%%%%%%%%%%%%%%%%%
\begin{proposition}
\label{sylow-structure}
%%%%%%%%%%%%%%%%%%%%%%%%%%%%%%%%%%%%%%%%%%%%%%%%%%%%%%%%%%%%%%%%%%%%%%%%%%
Under the above notation the following hold:
\begin{enumerate}

\item[(1)]
$m_0$ is the multiplicative order of $q \pmod p$.

\item[(2)]
Except in the case where $\L(q) = {}^3D_4(q)$ with $p=3$, $S$ has
a nontrivial normal homocyclic subgroup, $S_T$, of rank $r_{m_0}$ and
exponent $\ord{\Phi_{m_0}(q)}_p$.

\item[(3)]
With the same exception as in (2), $S$ is a split extension of
$S_T$ by a (possibly trivial) subgroup $S_W$ of order $p^b$
(where $b$ is defined in (\ref{weyl-order})),
and $S_W$ is isomorphic to a subgroup of $W$.
In particular, if $p \notdivides \ord W$ or if $pm_0 \notdivides m$
for all $m \in \O(\L(q))$, then $S = S_T$ is homocyclic abelian.

\item[(4)]
If $\L(q) = {}^3D_4(q)$ with $p=3$ and $\ord{q^2 - 1}_3 = 3^a$,
then $S$ is a split extension of an abelian group of type
$(3^{a+1},3^a)$ by a group of order 3, and $S$ has rank 2.

\item[(5)]
If $\L(q)$ is a classical group (linear, unitary, symplectic or
orthogonal) then every element of order $p$ is conjugate to some
element of $S_T$.

\item[(6)]
Except in ${}^3 D_4(q)$ (where $S_W$ is not defined),
$S_W$ acts faithfully on $S_T$; and
in the simple group $\L(q)/Z(\L(q)) = \overline{\L(q)}$
we have $\overline{S_W} \iso S_W$ acts faithfully on $\overline{S_T}$
except when $p=3$ with
$\L(q) \iso SL_3(q)$ (with $3 \divides q-1$ but $9 \notdivides q-1$)
or $SU_3(q)$ (with $3 \divides q+1$ but $9 \notdivides q+1$).

\item[(7)]
If a Sylow $p$-subgroup of the simple group
$\L(q)/Z(\L(q))$ is abelian but not elementary abelian
then $p$ does not divide
the order of the Schur multiplier of $\L(q)$.
\end{enumerate}
\end{proposition}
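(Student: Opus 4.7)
The plan is to deduce all seven parts by specializing the generic cross-characteristic Sylow theory of finite groups of Lie type, as developed in \cite[Section 10]{GL} and reprised in \cite[Section 4.10]{GLS3}, and then to dispose of the exceptional configurations in (4) and (6) by direct calculation inside the specific families concerned. Part (1) is immediate from the elementary fact that for $p \notdivides q$ one has $p \divides \Phi_m(q)$ precisely when $m = p^a m_0$ for some $a \ge 0$, where $m_0$ is the multiplicative order of $q$ modulo $p$; the minimality of $m_0$ as an element of $\O(\L(q))$ with $p \divides \Phi_{m_0}(q)$ then yields (1).

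For (2) and (3) the main tool is a generic $\Phi_{m_0}$-torus of $\L(q)$: there is a maximal torus $T$ whose order is divisible by $\Phi_{m_0}(q)^{r_{m_0}}$, with Sylow $p$-subgroup $S_T$ homocyclic abelian of rank $r_{m_0}$ and exponent $\ord{\Phi_{m_0}(q)}_p$. Since $S_T$ is self-centralizing in $\L(q)$ outside the ${}^3D_4(q)$, $p=3$ exception, one has $N_{\L(q)}(S_T) \le N_{\L(q)}(T)$, so a Sylow $p$-subgroup $S$ of $N_{\L(q)}(T)$ is already a Sylow $p$-subgroup of $\L(q)$; the quotient $S/S_T$ then embeds into $N_{\L(q)}(T)/T$, which is a subgroup of the ambient Weyl group $W$. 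The extra contribution of order $p^b$ prescribed by (\ref{weyl-order}) comes from those $m \in \mathcal W$ whose cyclotomic factors supply $p$-elements only through the Weyl action, and the split extension is obtained by lifting a Sylow $p$-subgroup of the relative Weyl group back into $N_{\L(q)}(T)$ via a Lang--Steinberg argument. Part (4) is then a direct calculation in ${}^3D_4(q)$, exploiting the factorization of its order: when $p=3$ both $\Phi_2(q)$ and $\Phi_6(q)$ contribute $3$-elements that amalgamate inside a single torus to give an abelian group of type $(3^{a+1},3^a)$, and the triality contributes the outer element of order $3$.

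Part (5) rests on the classical fact that every semisimple element of a classical group is conjugate into any maximal torus whose order has appropriate divisibility --- via rational canonical form for $PSL$ and $PSU$, and isotypic decomposition of invariant forms for symplectic and orthogonal types --- so every element of order $p$ is conjugate into $S_T$. For (6) one analyzes the kernel of the action of $S_W$ on the cocharacter lattice of $T$: it is generically trivial, and the two $p=3$ exceptions for $SL_3(q)$ and $SU_3(q)$ occur precisely because the $3$-part of $Z(\L(q))$ is then generated by the product that the Weyl $3$-cycle permutes, so that cycle becomes inner modulo the center upon passage to the simple quotient. Part (7) is a verification against the known table of Schur multipliers: in cross characteristic, the primes dividing a Schur multiplier of $\L(q)/Z(\L(q))$ divide $\ord W$ apart from a short list of small exceptional multipliers, so for such primes (3) forces $b \ne 0$ and $S$ non-abelian. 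The principal obstacle throughout is the meticulous case-work demanded by (4) and (6); in particular, tracking how the Weyl $3$-cycle in $SL_3^{\pm}(q)$ is absorbed into the center on passage to the simple quotient is the delicate point that makes (6) --- and thereby (5) and (7) --- substantive refinements of the generic framework rather than mere formal consequences.
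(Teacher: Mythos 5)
The paper itself proves only part (7); parts (1)--(6) are quoted verbatim from \cite[10-1, 10-2]{GL} and \cite[Theorems 4.10.2, 4.10.3]{GLS3}. Your reconstructions of (1)--(6) are therefore sketches of the cited theorems' proofs rather than a genuinely different route; they are broadly in the right spirit, though some assertions are imprecise --- e.g.\ ``$S_T$ is self-centralizing in $\L(q)$'' is not literally true (its centralizer contains the whole maximal torus $T$, which is generally much larger than $S_T$); the standard argument instead shows $T$ is recoverable from $C(S_T)$ so that $N(S_T) \le N(T)$.

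The genuine gap is in part (7), the one part the paper actually argues. Your chain ``$p$ divides the multiplier $\Rightarrow p \divides \ord W \Rightarrow$ (3) forces $b \ne 0$, so $S$ is non-abelian'' fails at two points. First, $p \divides \ord W$ alone does not force $b \ne 0$ in (3); one also needs $pm_0 \divides m$ for some $m \in \O(\L(q))$. Second, and more seriously, the group whose Sylow subgroup you conclude is non-abelian is the universal group $\L(q)$, whereas (7) is a statement about the simple quotient $\L(q)/Z(\L(q))$ --- and the primes dividing the multiplier are precisely the primes dividing $Z(\L(q))$, so non-abelianness can be destroyed on passage to the quotient. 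The correct argument is the paper's: by the multiplier table, $p$ dividing the multiplier forces $\L(q)$ of type $SL_n^\epsilon(q)$ with $p \divides (n,q-\epsilon)$ or $E_6^\epsilon(q)$ with $p = 3 \divides q-\epsilon$; in these cases $m_0 \in \{1,2\}$ and $pm_0 \in \O(\L(q))$, so $b \ne 0$; and then part (6) guarantees that $\overline{S_W}$ still acts faithfully on $\overline{S_T}$ in the simple group --- so its Sylow $p$-subgroup is non-abelian --- except for $SL_3^\epsilon(q)$ with $9 \notdivides q-\epsilon$, where the Sylow 3-subgroup of the simple group has order $9$ and is elementary abelian. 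In every case the hypothesis of (7) fails. Your closing sentence correctly identifies (6) as the crux, but the argument you actually give for (7) neither invokes it nor disposes of the exceptional cases in which the image of $S$ is abelian.
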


\begin{proof}
For parts (1) to (6)
see \cite[10-1, 10-2]{GL} or \cite[Theorems 4.10.2, 4.10.3]{GLS3}.
If the odd prime $p$ divides the order of the Schur multiplier of
$\L(q)$ then by \cite[Table 6.12]{GLS3} we must have
$\L(q)$ of type $SL_n(q)$, $SU_n(q)$, $E_6(q)$ or ${}^2 E_6(q)$ with
$p$ dividing $(n,q-1)$, $(n,q+1)$, $(3,q-1)$ or $(3,q+1)$ respectively.
It follows easily from (6) that in each of the
corresponding simple groups a Sylow
$p$-subgroup cannot be abelian of exponent $\ge p^2$.
\end{proof}

We shall frequently adopt the efficient shorthand from the
sources just cited for the latter families.

\medskip\noindent
{\bf Notation.}
Denote $SL_n(q)$ by $SL_n^+(q)$ and $SU_n(q)$ by $SL_n^-(q)$ (likewise for the
general linear and projective groups);
and say a group is of type $SL_n^\epsilon(q)$ according
to whether $p \divides q - \epsilon$ for $\epsilon = +1$, $-1$ respectively
(dropping the ``1'' from $\pm 1$).
The analogous convention is adopted for $E_6(q) = E_6^+(q)$
and ${}^2 E_6(q) = E_6^-(q)$.

\medskip

The following general result is especially important for the
groups of Lie type.

%%%%%%%%%%%%%%%%%%%%%%%%%%%%%%%%%%%%%%%%%%%%%%%%%%%%%%%%%%%%%%%%%%%%%%%%%%
\begin{proposition}
\label{irreducible-action}
%%%%%%%%%%%%%%%%%%%%%%%%%%%%%%%%%%%%%%%%%%%%%%%%%%%%%%%%%%%%%%%%%%%%%%%%%%
If $G$ is any simple group with an abelian Sylow $p$-subgroup
$S$ for any prime $p$,
then $N_G(S)$ acts irreducibly and nontrivially on $\Omega_1(S)$,
and so $S$ is homocyclic.
In particular, a nontrivial subgroup
of $S$ is strongly closed if and only if it is homocyclic of the
same rank as $S$.
\end{proposition}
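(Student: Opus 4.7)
My plan is to combine Burnside's classical fusion theorem, the focal subgroup theorem, and the classification of finite simple groups with an abelian Sylow $p$-subgroup, and then extract the structural conclusions by a short module-theoretic argument. Since $S$ is abelian and hence equal to its own center, Burnside's fusion theorem gives that $N_G(S)$ controls $G$-fusion in $S$; consequently, a subgroup of $S$ is strongly closed in $S$ with respect to $G$ if and only if it is $N_G(S)$-invariant. For nontriviality, I use the focal subgroup theorem: since $G$ is non-abelian simple, $G = [G,G]$, so $S \cap [G,G] = S$, and Burnside gives that this focal subgroup equals $[S, N_G(S)]$. The quotient $H := N_G(S)/C_G(S)$ has order prime to $p$ (as $S \le C_G(S)$), so Fitting's Lemma gives $S = C_S(H) \oplus [S,H]$ and hence $C_S(H) = 0$; a standard coprime-action argument (an automorphism group of $p'$-order that acts trivially on $\Omega_1$ of an abelian $p$-group acts trivially everywhere) then forces $C_{\Omega_1(S)}(H) = 0$, so $N_G(S)$ acts with no nontrivial fixed vector on $\Omega_1(S)$.

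The main obstacle is irreducibility. Suppose for contradiction that $V \le \Omega_1(S)$ is a proper nontrivial $N_G(S)$-invariant (hence strongly closed) subgroup. If $|V| = p$, Proposition~\ref{proposition3-4} applied with $K = G$ and $A = V$ (using $G \hookrightarrow \Aut(G)$ via inner automorphisms, since $Z(G) = 1$) forces the Sylow $p$-subgroups of $G$ to be cyclic, since each exceptional family listed there ($U_3(p)$ and the groups in (iii), (iv) of Theorem~\ref{theorem3-1}) has a non-abelian Sylow $p$-subgroup; but then $\Omega_1(S) = V$, contradicting properness. For $|V| \ge p^2$ I invoke the classification of finite simple groups with abelian Sylow $p$-subgroups: for Lie-type $G$, Proposition~\ref{sylow-structure} shows $S = S_T$ is homocyclic and that a subgroup of the relative Weyl group embedding into $N_G(S)/C_G(S)$ acts irreducibly on $\Omega_1(S_T)$ by the standard representation theory of Weyl groups on the Cartan modulo $p$; the alternating cases (only $A_n$ with $p \le n < p^2$ has an abelian Sylow $p$-subgroup, which is elementary abelian of rank $\lfloor n/p\rfloor$ with $N_G(S)/S$ acting irreducibly via a wreath-product structure) and the finitely many sporadic cases are verified directly.

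Homocyclicity then follows at once: if $S \cong \prod_i \mathbb{Z}/p^{a_i}$ were not homocyclic and $e = \max_i a_i$, then $\{x^{p^{e-1}} : x \in S\}$ is a proper nontrivial characteristic subgroup of $\Omega_1(S)$, contradicting irreducibility. For the final characterization, let $A \le S$ be nontrivial and strongly closed with exponent $p^f$, so $A \le \Omega_f(S)$. I argue $A = \Omega_f(S)$ by induction on $f$. The base case $f=1$: $A$ is a nonzero $N_G(S)$-invariant subgroup of $\Omega_1(S)$, hence $A = \Omega_1(S)$ by irreducibility. For $f \ge 2$, the subgroup $A' = A \cap \Omega_{f-1}(S)$ is an intersection of strongly closed subgroups (since $\Omega_{f-1}(S)$ is characteristic in $S$), hence strongly closed, and has exponent $p^{f-1}$ since it contains the $p$-th power of any element of $A$ of order $p^f$, so $A' = \Omega_{f-1}(S)$ by induction. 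The $N_G(S)$-equivariant $(p^{f-1})$-th power map $A \to \Omega_1(S)$ has nontrivial image and therefore is surjective by irreducibility, with kernel $A'$; this gives $|A| = |A'|\cdot |\Omega_1(S)| = |\Omega_f(S)|$, forcing $A = \Omega_f(S)$, which is homocyclic of the same rank $r$ as $S$. Conversely, each $\Omega_f(S)$ is characteristic in $S$ and so strongly closed, completing the characterization.
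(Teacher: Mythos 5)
The paper's ``proof'' of Proposition~\ref{irreducible-action} is a bare citation of \cite[Proposition 7.8.1]{GLS3} and \cite[12-1]{GL}, so your proposal is genuinely a different (and more informative) route: you reconstruct the statement from first principles where possible. Your reductions are all sound. Burnside's fusion theorem correctly identifies strongly closed subgroups of the abelian $S$ with $N_G(S)$-invariant ones; the focal subgroup theorem plus the coprime Fitting decomposition $S = C_S(H) \times [S,H]$ (with $H = N_G(S)/C_G(S)$ a $p'$-group because $S \le C_G(S)$) gives $C_S(H)=1$ and hence fixed-point-free, in particular nontrivial, action on $\Omega_1(S)$; the deduction of homocyclicity from irreducibility via the proper nontrivial characteristic subgroup $S^{p^{e-1}} \le \Omega_1(S)$ is exactly right; and your induction showing that a strongly closed subgroup of exponent $p^f$ must be all of $\Omega_f(S)$ is a clean, complete derivation of the ``in particular'' clause, which the paper leaves entirely implicit. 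The one caveat concerns the crux, irreducibility for the Lie-type groups: this is \emph{not} ``standard representation theory of Weyl groups on the Cartan modulo $p$.'' The mod-$p$ reduction of the relevant reflection/cocharacter module for the relative Weyl group $C_W(w)$ is frequently reducible in general (e.g.\ type $A_n$ with $p \divides n+1$), and the content of \cite[12-1]{GL} is precisely the case-by-case verification that irreducibility holds in exactly those configurations where the Sylow $p$-subgroup is abelian (in $PSL_{n+1}(q)$ with $p \divides q-1$, abelianness forces $p \notdivides n+1$, which is what rescues irreducibility of the standard $S_{n+1}$-module). So at that one step you are re-citing, in disguise, the same deep fact the paper cites; with that reference made explicit your argument closes completely, and everything you add around it is correct and worth having.
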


\begin{proof}
See \cite[Proposition 7.8.1]{GLS3} and \cite[12-1]{GL}.
\end{proof}

%%%%%%%%%%%%%%%%%%%%%%%%%%%%%%%%%%%%%%%%%%%%%%%%%%%%%%%%%%%%%%%%%%%%%%%%%%
\begin{proposition}
\label{normalizer-of-A-Lie}
%%%%%%%%%%%%%%%%%%%%%%%%%%%%%%%%%%%%%%%%%%%%%%%%%%%%%%%%%%%%%%%%%%%%%%%%%%
Let $G$ be a simple group of Lie type over $\FF_q$
and let $p$ be an odd prime not dividing $q$.
Assume a Sylow $p$-subgroup $S$ of $G$ is abelian and let $A = \Omega_1(S)$.
Then $N_G(A) = N_G(S)$.
\end{proposition}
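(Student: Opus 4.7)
The inclusion $N_G(S) \leq N_G(A)$ is immediate because $A = \Omega_1(S)$ is characteristic in $S$; the content of the proposition is the reverse inclusion.

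My plan begins with Burnside's fusion theorem, valid for the abelian Sylow $S$: for any $g \in N_G(A)$, since $A = A^g \leq S$, there exists $n \in N_G(S)$ with $a^g = a^n$ for all $a \in A$, so $gn^{-1} \in C_G(A)$. This yields $N_G(A) = N_G(S) \cdot C_G(A)$, reducing the proposition to showing $C_G(A) \leq N_G(S)$, equivalently that $S$ is normal in $H := C_G(A)$.

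For $S \triangleleft H$, I would combine two ingredients. First, $N_G(S)/C_G(S)$ is a $p'$-group (because $S$ is a Sylow $p$-subgroup of $N_G(S)$, being Sylow in $G$) and it acts faithfully on $A$, since the kernel of the restriction $\Aut(S) \to \Aut(A)$ is a $p$-group (the congruence kernel of $GL_r(\mathbb{Z}/p^a) \to GL_r(\mathbb{F}_p)$, where $S$ is homocyclic of rank $r$ and exponent $p^a$ by Proposition~\ref{irreducible-action}); consequently the embedding $N_H(S)/C_H(S) \hookrightarrow N_G(S)/C_G(S)$ is trivial (as $H$ centralizes $A$), so $N_H(S) = C_H(S)$, and Burnside's transfer theorem makes $H$ $p$-nilpotent, with normal $p$-complement $K = O_{p'}(H)$. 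Second, every element of $H$ of order $p$ lies in $A$: if $x \in H$ has order $p$, then $\langle x, A \rangle$ is an elementary abelian $p$-group contained in some abelian Sylow $T$ of $G$ with $\Omega_1(T) = A^g$ for the conjugate $g$ satisfying $T = S^g$, and rank considerations force $A^g = A$, so $x \in A$.

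With these in hand, the coprime action of each $k \in K$ on $S$ --- once $k$ is known to normalize $S$ --- acts trivially on $A = \Omega_1(S)$, so in the decomposition $S = [S, \langle k \rangle] \times C_S(\langle k \rangle)$ the factor $[S, \langle k \rangle]$ has trivial socle and is therefore trivial, giving $k \in C_G(S)$. Thus $K \leq C_G(S)$, $H = S \times K$, and $S \triangleleft H$. The main obstacle is confirming that each $k \in K$ normalizes $S$; I would address this via the identity $[k, x] \in K \cap S = 1$ for every $x \in S \cap S^k$ (using that $[k,x]$ lies in $K$ because $H/K \iso S$ is abelian, and in $S$ because $kxk^{-1} \in S$ for $x \in S \cap S^k$), which shows $k$ centralizes $S \cap S^k$; together with a subsidiary Burnside fusion inside $\langle k, S \rangle$ and the fact that $\Omega_1(S) = \Omega_1(S^k) = A$, this forces $S^k = S$.
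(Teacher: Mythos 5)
Your opening reductions are correct and coincide with the paper's: $N_G(A)=C_G(A)N_G(S)$ (the paper gets this by Frattini's Argument applied to $S\in Syl_p(C_G(A))$, you by Burnside fusion — same content), and then the observation that a nontrivial $p'$-automorphism of the homocyclic group $S$ must act nontrivially on $A=\Omega_1(S)$, so Burnside's normal $p$-complement theorem gives $C_G(A)=K\rtimes S$ with $K=O_{p'}(C_G(A))$. At that point everything hinges on showing $[K,S]=1$, and this is exactly where your argument has a genuine gap. The facts you have assembled about $H=C_G(A)$ — that $H$ is $p$-nilpotent, that every element of order $p$ in $H$ lies in $A$, that $N_H(S)=C_H(S)$, that each $k\in K$ centralizes $S\cap S^k$, and that $\Omega_1(S)=\Omega_1(S^k)=A$ — are all satisfied by abstract groups in which $S$ is \emph{not} normal: take $H=(\Z/q)\rtimes(\Z/p^2)$ with $q\equiv 1\pmod p$ a prime and the subgroup $A$ of order $p$ acting trivially. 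There $A\le Z(H)$, $S$ is cyclic so every element of order $p$ lies in $A$, $S\cap S^k=A$ is centralized by $k$, and yet $K$ does not normalize $S$. So no ``subsidiary Burnside fusion inside $\langle k,S\rangle$'' can force $S^k=S$ from these ingredients alone; the desired conclusion is simply not a consequence of the abstract configuration you have isolated.

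What is actually needed — and what the paper supplies — is Lie-theoretic input about $C_G(A)$. The paper first reduces to the universal version of $G$ (using part (7) of Proposition~\ref{sylow-structure} to see $p$ does not divide the Schur multiplier), then sets $\Delta=[O_{p'}(C_G(A)),S]$ and works in the ambient algebraic group: by Springer--Steinberg the connected centralizer $\overline C=C_{\overline G}(\overline A)^{\circ}$ is reductive, so $\Delta$ lies in the semisimple part, whose $\sigma$-fixed points form a commuting product $L_1\cdots L_n$ of smaller-rank groups of Lie type in the defining characteristic on which $S$ induces inner-diagonal automorphisms; since $\Delta\le O_{p'}(L_1)\cdots O_{p'}(L_n)$ and each $O_{p'}(L_i)$ is either all of $L_i$ (making $\mathrm{Inndiag}(L_i)$ a $p'$-group) or contained in $Z(L_i)$ (which $\mathrm{Inndiag}(L_i)$ centralizes), $S$ centralizes $\Delta$ in every case. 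If you want to complete your proof, you must replace the last paragraph with an argument of this kind; the elementary fusion machinery cannot see why $O_{p'}(C_G(A))$ is forced to centralize $S$.
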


\begin{proof}
The result is trivial if $S = A$ so assume this is not the case;
in particular the exponent of $S$ is at least $p^2$.
By part (7) of Proposition~\ref{sylow-structure}, $p$ does not divide the order
of the Schur multiplier of $G$, so we may assume $G$ is the (quasisimple)
universal cover of the simple group.
Clearly $N_G(S) \le N_G(A)$.
Moreover, since $S \in Syl_p(C_G(A))$, by Frattini's Argument
$N_G(A) = C_G(A)N_G(S)$.
Thus it suffices to show $C_G(A) = C_G(S)$.
Since $C_G(A)$ has an abelian Sylow $p$-subgroup and since
any nontrivial $p'$-automorphism of $S$ must act nontrivially on
$A$, by Burnside's Theorem $C_G(A)$ has a normal $p$-complement.
Let $\Delta = [O_{p'}(C_G(A)),S]$.
It suffices to prove $S$ centralizes $\Delta$.

Let $\overline G$ be the simply connected universal
algebraic group over the algebraic closure of $\FF_q$, and let
$\sigma$ be a Steinberg endomorphism whose fixed points equal $G$.
In the notation of Proposition~\ref{sylow-structure}, since $S = S_T$,
by the proof of \cite[Theorem 4.10.2]{GLS3} there is a
$\sigma$-stable maximal torus $\overline T$
of $\overline G$ containing $S$.
Let $\overline C$ denote the connected component of
$C_{\overline G}(\overline A)$, so $\overline C$ is also
$\sigma$-stable.
Note that $\overline T \le \overline C$ and since
$\Delta$ is generated by conjugates of $S$, so too
$\Delta \le \overline C$.
We may now follow the basic ideas in the proof of
\cite[Theorem 7.7.1(d)(2)]{GLS3}, where more background is provided.
By \cite[4.1(b)]{SS}, $\overline C$ is reductive, so by
the general theory of connected reductive groups
$$
\overline C = \overline Z \, \overline L
$$
where $\overline Z$ is the connected component of the center of
$\overline C$, $\overline L$ is the semisimple component
(possibly trivial), and $\overline Z \cap \overline L$ is a finite group.
Since $\Delta \le \overline C'$ we have $\Delta \le \overline L$.
The group of fixed points of $\sigma$ on $\overline L$ is a
commuting product $L_1 \cdots L_n$ of (possibly solvable)
groups of Lie type over the same characteristic as $G$ and smaller rank,
and $S$ induces inner or diagonal automorphisms on each $L_i$.
Since $\Delta \le O_{p'}(C_G(A))$ we have
$$
\Delta \le O_{p'}(L_1 \cdots L_n) = O_{p'}(L_1) \cdots O_{p'}(L_n).
$$
If $L_i$ is a $p'$-group, then $\text{Inndiag}(L_i)$ is also
a $p'$-group and so $S$ centralizes $L_i$.  On the other hand,
if $p$ divides the order of $L_i$, then $O_{p'}(L_i) \le Z(L_i)$;
in this case $\text{Inndiag}(L_i)$ centralizes $Z(L_i)$.
In all cases $S$ centralizes $O_{p'}(L_i)$, as needed.
\end{proof}

%%%%%%%%%%%%%%%%%%%%%%%%%%%%%%%%%%%%%%%%%%%%%%%%%%%%%%%%%%%%%%%%%%%%%%%%%%
\begin{proposition}
\label{normalizer-of-others-in-theorem3}
%%%%%%%%%%%%%%%%%%%%%%%%%%%%%%%%%%%%%%%%%%%%%%%%%%%%%%%%%%%%%%%%%%%%%%%%%%
Let $p$ be any prime, let $G$ be a simple group containing a strongly
closed $p$-subgroup, let $S \in Syl_p(G)$ and let $Z = Z(S)$.

\begin{enumerate}

\item[(1)]
Assume $G \iso U_3(q)$ with $q = p^n$, or $G \iso Sz(q)$ with $p = 2$ and $q = 2^n$.
Then $S$ is a special group of type $q^{1+2}$ or $q^{1+1}$ respectively,
and $N_G(S) = N_G(Z) = SH$, where the Cartan subgroup $H$ is
cyclic of order $(q^2-1)/(3,q+1)$ or $q-1$ respectively.
In both families $H$ acts irreducibly on both $Z$ and $S/Z$,
and $Z$ is the unique
nontrivial, proper strongly closed subgroup of $S$.

\item[(2)]
Assume $G \iso Re(q)$ with $p = 3$ and $q = 3^n$, $n > 1$.
Then $S$ is of class 3, $Z \iso E_q$ and
$S' = \Phi(S) = \Omega_1(S) \iso E_{q^2}$.
Furthermore, $N_G(S) = N_G(Z) = SH$, where the Cartan subgroup $H$ is
cyclic of order $q-1$ and acts irreducibly on all three
central series factors:
$Z$ and $S'/Z$ and $S/S'$.
Thus $Z$ and $\Omega_1(S)$ are the only nontrivial
proper strongly closed subgroups of $S$.

\item[(3)]
Assume $G \iso G_2(q)$ for some $q$ with $(q,3) = 1$ and $p = 3$.
Then $Z \iso Z_3$ is the only nontrivial
proper strongly closed subgroup of $S$.
Furthermore, $N_G(Z) \iso SL_3^\epsilon(q)\cdot 2$
according to whether $3 \divides q-\epsilon$.
An element of order 2 in $N_G(Z) - C_G(Z)$ inverts $Z$, and
$N_G(S)/S \iso QD_{16}$ or $E_4$ according as $\ord S = 3^3$ or
$\ord S > 3^3$ respectively.
No automorphism of $G$ of order 3 normalizes $S$ and
centralizes both $S/Z$ and a $3'$-Hall subgroup of $N_G(S)$.

\item[(4)]
Assume $G$ is isomorphic to one of the sporadic groups: $J_2$ (with $p=3$);
$Co_2$, $Co_3$, $HS$, $Mc$ (with $p=5$); or $J_4$ (with $p=11$).
In each case $S$ is non-abelian of order $p^3$ and exponent $p$,
and $Z$ is the only nontrivial
proper strongly closed subgroup of $S$.
The normalizer of $Z$ [in $G$] is:
$3PGL_2(9)$ [in $J_2$],
$5^{1{+}2}((4*SL_3(3))\cdot 2)$ [in $Co_2$],
$5^{1{+}2}((4 \text{Y} S_3)\cdot 4)$ [in $Co_3$],
$5^{1{+}2}(8\cdot 2)$ [in $HS$],
$(5^{1{+}2}\cdot 3)\cdot 8$ [in $Mc$], or
$(11^{1{+}2}\cdot SL_2(3))\cdot 10$ [in $J_4$].
In $G = J_2$ we have $N_G(S)/S \iso Z_8$; and in all other cases
$N_G(S) = N_G(A)$.

\item[(5)]
Assume $G \iso J_3$ with $p=3$.
Then $Z \iso E_9$ and $\Omega_1(S) \iso E_{27}$ are the only nontrivial
proper strongly closed subgroups of $S$.
Furthermore, $N_G(Z) = N_G(S) = SH$ where
$H \iso Z_8$ acts fixed point freely on
$\Omega_1(S)$ and irreducibly on $Z$.

\end{enumerate}
\end{proposition}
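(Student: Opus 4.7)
The proposition is a catalog of structural facts: for each simple group appearing in the conclusions to Theorems~\ref{theorem3-1-p-equal-2} and \ref{theorem3-1} we must identify the Sylow $p$-subgroup $S$, the structure of $N_G(S)$ and $N_G(Z)$, and list all proper nontrivial strongly closed subgroups of $S$. The plan is to address each of the five parts in turn, but the unifying strategy has two steps. First, obtain an \emph{upper bound} on candidates: any strongly closed $A \le S$ is normalized by $N_G(S)$, so once a complement $H$ to $S$ in $N_G(S)$ has been identified, $A$ must be $H$-invariant, and irreducibility of $H$ on successive central-series factors of $S$ restricts the options drastically. Second, confirm a \emph{lower bound} by showing that each claimed candidate is actually closed under $G$-fusion; this uses the conjugacy-class structure of $p$-elements in $G$.

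For parts (1) and (2), the rank-one families $U_3(q)$, $Sz(q)$, $Re(q)$, the Sylow structure and the Bruhat decomposition $G = B \cup BwB$ with $B = SH$ are recorded in \cite[Theorems~4.10.2--4.10.3]{GLS3}. The asserted isomorphism type of $S$ and the cyclic Cartan $H$ acting irreducibly on each chief factor of $S$ are classical. By this irreducibility, the only $H$-invariant proper subgroups of $S$ are $Z$, together with $\Omega_1(S) = S'$ in the Ree case. That these candidates are actually strongly closed follows because in a rank-one $BN$-pair, fusion of $p$-elements within $S$ is controlled by $N_G(S) = B$: the single nontrivial double coset $BwB$ throws a nontrivial element of $S$ onto the opposite unipotent subgroup, so no $G$-fusion within $S$ occurs outside $B$-conjugation.

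For part (3), $G_2(q)$ with $(q,3)=1$ and $p=3$, I would first identify $C_G(Z)$ via Borel--de Siebenthal applied to the $G_2$ root system: since $\ord Z = 3$, its centralizer is the maximal subsystem subgroup of type $A_2^\epsilon$, giving $C_G(Z) \iso SL_3^\epsilon(q)$ with the sign determined by $q \pmod 3$; an element of the Weyl group of $G_2$ inverts $Z$, producing $N_G(Z) \iso SL_3^\epsilon(q)\cdot 2$. The computation of $N_G(S)/S$ splits by $\ord S$: when $\ord S = 3^3$ the group has $S$ extraspecial and the structure $QD_{16}$ is an ATLAS lookup, while for $\ord S > 3^3$ a Frattini argument relative to $N_G(Z)$ yields $N_G(S) = N_{N_G(Z)}(S)$ and a direct computation inside $SL_3^\epsilon(q)\cdot 2$ gives $N_G(S)/S \iso E_4$. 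Uniqueness of $Z$ as a proper nontrivial strongly closed subgroup combines the $H$-invariance constraint with Proposition~\ref{proposition3-4}. The subtlest point is ruling out an order-$3$ outer automorphism normalizing $S$ and centralizing both $S/Z$ and a $3'$-complement: any such automorphism must be a field (or graph) automorphism acting in a known way on the fundamental roots, and these actions are incompatible with the centralization hypothesis.

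For parts (4) and (5), the data extract from the ATLAS and from \cite[Section~5.3]{GLS3}. In the sporadic cases of (4), $S$ is extraspecial of order $p^3$ and exponent $p$, while a cyclic Hall $p'$-subgroup of $N_G(S)$ acts irreducibly on both $Z$ and $S/Z$; hence $Z$ is the unique nontrivial proper $H$-invariant subgroup, and the listed structures of $N_G(Z)$ are direct ATLAS readings. For $J_3$ with $p=3$, the Sylow structure with $H \iso Z_8$ acting fixed-point-freely on $\Omega_1(S)$ and irreducibly on $Z$ gives exactly the two strongly closed subgroups claimed. The main obstacle across (4) and (5) is verifying that elements of $Z$ are not $G$-fused to elements of $S \setminus Z$: in each case there are two $G$-conjugacy classes of order-$p$ elements distinguished by centralizer orders, with $Z \setminus \{1\}$ and $S \setminus Z$ occupying distinct classes, so strong closure is immediate from this class separation.
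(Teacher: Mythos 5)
Your overall architecture (literature citations for (1), (2), (4), (5); a genuine argument for (3) built on $C_G(Z) \iso SL_3^\epsilon(q)$ and the two classes of elements of order 3) matches the paper's, and the TI-subgroup/rank-one fusion argument for (1) and (2) is sound. But there is a genuine gap in your treatment of the final assertion of part (3). You claim that an order-3 automorphism $f$ of $G_2(q)$ normalizing $S$ and centralizing both $S/Z$ and a $3'$-Hall subgroup of $N_G(S)$ ``must be a field (or graph) automorphism.'' This is backwards: graph automorphisms of order 3 exist only for $q = 3^n$, which is excluded here, and field automorphisms are easily ruled out because centralizing $S/Z$ forces $\ind{S}{C_S(f)} \le 3$ while $\ind{G_2(r^3)}{G_2(r)}_3 \ge 3^2$ for all $r$ prime to $3$. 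The case that actually requires work --- and the reason the hypothesis about centralizing a $3'$-Hall subgroup appears in the statement at all --- is that $f$ is \emph{inner}, induced by an element of order 3 in the toral subgroup $S_T$ (or in $Z$ itself). One must then verify that no such element commutes with a $3'$-Hall subgroup of $N_G(S)$, which follows from the faithful action of that Hall subgroup established earlier in the proof (for instance, the involution inverting $Z$ already excludes elements of $Z$). Your argument never engages with the inner case, so this assertion is unproved.

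Two smaller defects in part (3). First, uniqueness of $Z$ does not follow from $N_G(S)$-invariance plus Proposition~\ref{proposition3-4}: when $\ord S > 3^3$ the subgroups $\Omega_k(S_T)$ are all $N_G(S)$-invariant, and what eliminates them is fusion --- any strongly closed $B$ with $Z < B$ is forced, by conjugating inside $C_G(Z) \iso SL_3^\epsilon(q)$, to contain a noncentral element of order 3 and hence all of $\Omega_1(S)$; one must then separately prove $\Omega_1(S) = S$, without which $\Omega_1(S)$ would survive as a second strongly closed subgroup, exactly as it does in $Re(q)$ and $J_3$. Second, the identification $N_G(S)/S \iso QD_{16}$ when $\ord S = 3^3$ cannot be an ATLAS lookup, since this case covers every $q$ with $9 \notdivides q^2-1$; it requires the in-house computation that the $3'$-Hall subgroup of $N_{C_G(Z)}(S)$ acts transitively on the order-3 subgroups of $S/Z$, hence has order at least 8, and together with the outer involution embeds as a Sylow 2-subgroup of $GL_2(3)$.
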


\begin{proof}
Part (1) may be found in \cite{HKS72} and \cite{Suz62}.
Part (2) appears in \cite{Wa66}.
All parts of (4) and (5) appear in \cite[Chapter~5]{GLS3}
with references therein.

In part (3), by \cite[14-7]{GL} the center of $S$ has order 3 and
$C = C_G(Z) \iso SL_3^\epsilon(q)$
according to the condition $3 \divides q - \epsilon$.
The same reference shows
$G$ has two conjugacy classes of elements of order 3:
the two nontrivial elements of $Z$ are in one class, and all
elements of order 3 in $S - Z$ lie in the other.
Now $S \le SL_3^\epsilon (q)$ acts absolutely irreducibly on
its natural 3-dimensional module over $\FF_q$
(or $\FF_{q^2}$ in the unitary case),
hence by Schur's Lemma the centralizer of $S$ in $C$
consists of scalar matrices.
Thus $Z = C_C(S) = C_G(S)$.
Since the two nontrivial elements of $Z$ are conjugate in $G$,
$N_G(Z) = C\gp t$ where an involution $t$ may be chosen to normalize
$S$ and induce a graph (transpose-inverse) automorphism on $C$.
By canonical forms, all non-central elements of order 3 in
$SL_3^\epsilon(q)$ are conjugate in $GL_3^\epsilon(q)$
to the same diagonal matrix
$u = \text{diag}(\lambda,\lambda^{-1},1)$,
where $\lambda$ is a primitive cube root of unity,
but are also conjugate in $SL_3^\epsilon(q)$ to $u$
because the outer (diagonal) automorphism group induced
by $GL_3^\epsilon$ may be represented by diagonal matrices that
commute with $u$.
Thus all elements of order 3 in $S - Z$ are conjugate in $C$.

If $\ord S = 27$, then since $S/Z$ is abelian of type (3,3),
all elements of order 3
in $S/Z$ are conjugate under the action of $N_C(S/Z) = N_C(S)/Z$;
hence they are conjugate under the faithful action of
a $3'$-Hall subgroup, $H_0$, of $N_C(S)$ on $S/Z$.
This shows $\ord{H_0} \ge 8$.
Since a $3'$-Hall subgroup $H$ of
$N_G(S)$ acts faithfully on $S/Z$ and has order $2 \ord{H_0}$,
it must be isomorphic to a Sylow 2-subgroup, $QD_{16}$, of $GL_2(3)$
as claimed.

If $\ord S = 3^{2a+1} > 27$
then we may describe $S$ as the group, $S_T$, of diagonal
matrices of 3-power order acted upon by a permutation matrix $w$ of order 3
(where $\gp w = S_W$).
Then $S_T \iso Z_{3^a} \times Z_{3^a}$
is the unique abelian subgroup of $S$ of index 3 (as $\ord Z = 3$),
so $N_C(S)$ normalizes $S_T$.  Let $H_0$ be a $3'$-Hall subgroup of $N_C(S)$.
One easily sees that $H_0$ must act faithfully on $\Omega_1(S_T)$
(and centralize $Z$), hence $\ord{H_0} \le 2$.
Since there is a permutation matrix of order 2 in $C$ normalizing $S$,
$\ord{H_0} = 2$. Thus $N_G(S)/S$ has order 4, and is seen to be a fourgroup
by its action on $\Omega_1(S_T)$.

To see that $Z$ is the unique nontrivial strongly closed subgroup that
is proper in $S$ suppose $B$ is another, so that $Z < B$.
If $B$ contains an element of order 9 --- hence an element of order 9
represented by a diagonal matrix in $C$ --- then by conjugating in $C$ one
easily computes that $B - Z$ contains an element of order 3.
Since all such are conjugate in $C$ this shows $\Omega_1(S) \le B$.
It is an exercise that $\Omega_1(S) = S$ (the details appear at the
end of the proof of Lemma~\ref{G-not-classical}), a contradiction.

Finally, suppose $f$ is an automorphism of $G$ of order 3 that
normalizes $S$ and centralizes $S/Z$.
Then $\ind{S}{C_S(f)} \le 3$ so $f$ cannot be a field automorphism
as $\ind{G_2(r^3)}{G_2(r)}_3 \ge 3^2$ for all $r$ prime to 3.
Thus $f$ must induce an inner automorphism on $G$, hence act as
an element of order 3 in $S_T$.
We have already seen that no such element centralizes a
$3'$-Hall subgroup of $N_G(S)$, a contradiction.
This completes all parts of the proof.
\end{proof}

%%%%%%%%%%%%%%%%%%%%%%%%%%%%%%%%%%%%%%%%%%%%%%%%%%%%%%%%%%%%%%%%%%%%%%%%%%
\begin{corollary}
\label{normalizer-of-A-all}
%%%%%%%%%%%%%%%%%%%%%%%%%%%%%%%%%%%%%%%%%%%%%%%%%%%%%%%%%%%%%%%%%%%%%%%%%%
Let $p$ be any prime, let $L$ be a finite simple group possessing a
strongly closed $p$-subgroup $A$ that is properly contained in
the Sylow $p$-subgroup $S$ of $L$.
Assume further that $L$ is isomorphic to one of the groups $L_i$ in
the conclusion of Theorem~\ref{theorem3-1-p-equal-2}
or Theorem~\ref{theorem3-1}.
Then one of the following holds:

\begin{enumerate}

\item[(1)]
$N_L(S) = N_L(A)$,

\item[(2)]
$\ord A = 3$ and $L \iso G_2(q)$ for some $q$ with $(q,3) = 1$, or

\item[(3)]
$\ord A = 3$, $L \iso J_2$ and
$N_L(A) \iso 3PGL_2(9)$.

\end{enumerate}
\end{corollary}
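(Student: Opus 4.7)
The plan is to proceed by a case analysis on which family of simple groups $L_i$ from Theorems \ref{theorem3-1-p-equal-2} and \ref{theorem3-1} contains $L$. In every case, the classification of the strongly closed subgroups of $S$ furnished by Propositions \ref{irreducible-action} and \ref{normalizer-of-others-in-theorem3} shows that $A$ is one of $\Omega_k(S)$, $Z(S)$, or $\Omega_1(S) = S'$, all of which are characteristic in $S$; consequently the inclusion $N_L(S) \le N_L(A)$ is automatic, and the substance lies in proving $N_L(A) \le N_L(S)$ outside of the two announced exceptions.

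First I would dispatch case (i) of Theorem \ref{theorem3-1}, where $L$ is of Lie type in characteristic $\ne p$ with abelian, non-elementary abelian Sylow $p$-subgroup. Proposition \ref{irreducible-action} forces $A$ to be homocyclic of the same rank as the homocyclic group $S$, so $\Omega_1(A) = \Omega_1(S)$, and then Proposition \ref{normalizer-of-A-Lie} yields
\[
N_L(A) \le N_L(\Omega_1(A)) = N_L(\Omega_1(S)) = N_L(S),
\]
placing $L$ in conclusion~(1).

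Next I would treat the BN-rank 1 groups in defining characteristic, namely $L \iso U_3(p^n)$, $Sz(2^n)$, or $Re(3^n)$. By parts (1) and (2) of Proposition \ref{normalizer-of-others-in-theorem3}, $A$ equals $Z(S)$ (or in the Ree case, possibly $\Omega_1(S) = S'$), and in every case $A \trianglelefteq S$, so $S \le N_L(A)$. Since $L$ is simple and $A$ is a nontrivial proper subgroup, $N_L(A)$ is a proper overgroup of $S$ in $L$, and in any rank-1 Lie type group in defining characteristic the Borel $N_L(S)$ is the unique maximal overgroup of $S$; hence $N_L(A) \le N_L(S)$. The case $L \iso G_2(q)$ with $(q,3)=1$ is precisely exception (2): Proposition \ref{normalizer-of-others-in-theorem3}(3) shows $|A| = 3$ and $N_L(A) \iso SL_3^\epsilon(q)\cdot 2$ strictly contains $N_L(S)$.

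Finally, for the sporadic conclusion (iv), Proposition \ref{normalizer-of-others-in-theorem3}(4) directly records $N_L(S) = N_L(A)$ for $Co_2$, $Co_3$, $HS$, $Mc$ and $J_4$, while for $L \iso J_2$ it gives the strictly larger $N_L(A) \iso 3PGL_2(9)$, producing exception (3). In case (v), $L \iso J_3$, Proposition \ref{normalizer-of-others-in-theorem3}(5) gives $N_L(Z) = N_L(S)$ whenever $A = Z(S)$; when $A = \Omega_1(S) \iso E_{27}$ I would instead invoke the subgroup structure of $J_3$ — the only maximal subgroup with order divisible by $|S| = 3^5$ is $N_L(S)$ itself — to conclude $N_L(A) \le N_L(S)$. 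The main obstacle, modest in light of the earlier propositions, is this last subcase for $J_3$, where the equality $N_L(\Omega_1(S)) = N_L(S)$ is not read off directly from Proposition \ref{normalizer-of-others-in-theorem3} and requires an external appeal to rule out larger overgroups of the Sylow subgroup.
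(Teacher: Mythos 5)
Your proof is correct and follows essentially the same route as the paper, whose own proof is simply to read the conclusion off Propositions~\ref{normalizer-of-A-Lie} and \ref{normalizer-of-others-in-theorem3} family by family exactly as you do. The only added content is your explicit handling of the two subcases $A=\Omega_1(S)\ne Z(S)$ (in $Re(3^n)$ and $J_3$) via the maximal overgroups of $S$, which the paper treats as immediate from the recorded normalizer structure.
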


\begin{proof}
This is immediate from Propositions~\ref{normalizer-of-A-Lie}
and \ref{normalizer-of-others-in-theorem3}.
\end{proof}

%\bigskip

%%%%%%%%%%%%%%%%%%%%%%%%%%%%%%%%%%%%%%%%%%%%%%%%%%%%%%%%%%%%%%%%%%%%%%%%%%%%
\section{The Proofs of the Main Theorems}
\label{theorem3-2-proof}
%%%%%%%%%%%%%%%%%%%%%%%%%%%%%%%%%%%%%%%%%%%%%%%%%%%%%%%%%%%%%%%%%%%%%%%%%%%%
%
%\    %creates a blank line
%

%\medskip

In this section we first prove Theorem~\ref{theorem3-2};
Theorem~\ref{theorem3-1} and its corollaries are then derived from it
at the end of this section.

%%%%%%%%%%%%%%%%%%%%%%%%%%%%%%%%%%%%%%%%%%%%%%%%%%%%%%%%%%%%%%%%%%%%%%%%%%%%
\subsection{The Proof of Theorem~\ref{theorem3-2}}
\label{theorem3-2-proof-subsection}
%%%%%%%%%%%%%%%%%%%%%%%%%%%%%%%%%%%%%%%%%%%%%%%%%%%%%%%%%%%%%%%%%%%%%%%%%%%%
%
\    %creates a blank line

Throughout this subsection $p$ is an odd prime,
$G$ is a minimal counterexample to Theorem~\ref{theorem3-2},
and $A$ is a nontrivial strongly closed subgroup of $G$ that is a
proper subgroup of the Sylow $p$-subgroup $S$ of $G$. The
minimality implies that if $H$ is any proper section of $G$
containing a nontrivial minimal strongly closed (with respect to
$H$) $p$-subgroup $A_0$, then either $A_0$ is a Sylow subgroup of
its normal closure in $H$ or the normal closure of
$\overline{A_0}$ in $\overline H$ is a direct product of
isomorphic simple groups, as described in the conclusion of
Theorem~\ref{theorem3-1}, where overbars denote passage to
$H/\O_{A_0}(H)$. In particular, $A_0$ does not even have to be a
subgroup of $A$, although for the most part we will be applying
this inductive assumption to subgroups $A_0 \le A \cap H$ (which
we often show is nontrivial by invoking part (2) of
Lemma~\ref{basic-facts}).

Familiar facts about the families of simple groups, including the
sporadic groups, are often stated without reference. All of these
can be found in the excellent, encyclopedic source \cite{GLS3}.
Specific references are cited for less familiar results that are
crucial to our arguments.

\bigbreak

%%%%%%%%%%%%%%%%%%%%%%%%%%%%%%%%%%%%%%%%%%%%%%%%%%%%%%%%%%%%%%%%%%%%%%%%%%
\begin{lemma}
\label{G-is-simple}
%%%%%%%%%%%%%%%%%%%%%%%%%%%%%%%%%%%%%%%%%%%%%%%%%%%%%%%%%%%%%%%%%%%%%%%%%%
$G$ is a simple group.
\end{lemma}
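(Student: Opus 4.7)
The plan is to argue by contradiction using the minimality of $|G|$. I will first carry out two standard normalizing reductions and then, assuming $G$ is not simple, examine a minimal normal subgroup $N$ to derive a contradiction.

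For the reductions, first replace $G$ by $\langle A^G \rangle$: the subgroup $A$ is still minimal strongly closed there and still fails to be a Sylow $p$-subgroup of its normal closure (which is the same subgroup), and the conclusion of Theorem~\ref{theorem3-2} depends only on $\langle A^G \rangle$, so any verification for the smaller group transfers. Next, applying Lemma~\ref{basic-facts}(1), factor out $\O_A(G)$. The key point is that $A \cap \O_A(G)$ is itself strongly closed in $S$ with respect to $G$, so by the minimality of $A$ it is trivial: it cannot equal $A$, otherwise $A \le \O_A(G)$ would force $A$ to be a Sylow $p$-subgroup of $\langle A^G \rangle$. Consequently $\overline{A}$ remains nontrivial, minimal strongly closed, and not Sylow in its normal closure in $\overline{G} = G/\O_A(G)$, so any structural conclusion for $\overline{G}$ provided by minimality lifts directly back to $G$. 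Hence I may henceforth assume $G = \langle A^G \rangle$, $\O_A(G) = 1$, and in particular $O_{p'}(G) = 1$.

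Suppose now, for contradiction, that $G$ has a proper nontrivial normal subgroup, and choose $N$ minimal normal. Since $A$ is strongly closed and $N \triangleleft G$, the subgroup $A \cap N$ is also strongly closed, so the minimality of $A$ forces $A \cap N \in \{1, A\}$. The alternative $A \le N$ would give $G = \langle A^G \rangle \le N$, contradicting $N \lneq G$; hence $A \cap N = 1$. Because $O_{p'}(N)$ is characteristic in $N$ and $N \triangleleft G$, we have $O_{p'}(N) \le O_{p'}(G) = 1$, so Lemma~\ref{basic-facts}(2) yields $A \le C_G(N)$. Since $C_G(N) \triangleleft G$ and contains $\langle A^G \rangle = G$, it follows that $N \le Z(G)$, so $N$ is abelian; combined with $O_{p'}(N) = 1$ and the minimality of $N$, we conclude $N = \langle z \rangle$ with $z$ of order $p$ in $Z(G)$.

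The contradiction will come from passing to $\widetilde{G} = G/\langle z \rangle$ with $\widetilde{A} = A\langle z \rangle/\langle z \rangle \cong A$: because $A \cap \langle z \rangle = 1$, a lifting argument shows that any strongly closed subgroup of $\widetilde{A}$ in $\widetilde{G}$ pulls back to a strongly closed subgroup of $A$ in $G$, so the minimality of $A$ descends to show $\widetilde{A}$ is minimal strongly closed in $\widetilde{G}$. If $\widetilde{A}$ is not Sylow in $\langle \widetilde{A}^{\widetilde{G}} \rangle = \widetilde{G}$, then $\widetilde{G}$ is a smaller counterexample to Theorem~\ref{theorem3-2}, contradicting the minimality of $|G|$. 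I expect the main obstacle to be the remaining edge case $\widetilde{A} = \widetilde{S}$, where $S = A \times \langle z \rangle$ with $A$ of index $p$ and a central complement; here a direct analysis should exploit the strong closure of $A$ in the product $S = A \times \langle z \rangle$ (which forces $(az')^G \cap S \subseteq A z'$ for every $z' \in \langle z \rangle$), the centrality of $z$ in $G$, and the hypothesis $G = \langle A^G \rangle$, to show that the fusion pattern on $S$ splits compatibly with the direct product decomposition, which is inconsistent with $z$ lying in a nontrivial central factor while $G$ is generated by the conjugates of $A$.
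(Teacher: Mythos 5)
Your skeleton tracks the paper's: reduce to $G=\gp{A^G}$ with $\O_A(G)=1$, use minimality of $A$ and Lemma~\ref{basic-facts}(2) to force any proper normal subgroup to meet $A$ trivially and be centralized by $G$, and push the residual central subgroup of order $p$ into a quotient argument. But there are two genuine gaps. First, your opening reduction asserts that $A$ is ``still minimal strongly closed'' in $G_0=\gp{A^G}$. That does not follow: strong closure with respect to $G_0$ is a \emph{weaker} condition than strong closure with respect to $G$ (there is less fusion in $G_0$), so a proper nontrivial subgroup of $A$ may be strongly closed in $G_0$ without being strongly closed in $G$. The paper instead chooses a minimal strongly closed subgroup $A_0\le A$ \emph{of $G_0$}, applies the inductive hypothesis to $(G_0,A_0)$ to produce a semisimple normal subgroup $M=\gp{N^G}$ of $G$ with $1\ne A_0\le A\cap M$, and only then invokes minimality of $A$ in $G$ to get $A\le M$ and conclude $G$ is not a counterexample.

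Second, and more seriously, the endgame is not a proof. Once $N=\gp{z}\le Z(G)$ has order $p$ with $A\cap\gp{z}=1$, your claim that ``$\widetilde G$ is a smaller counterexample, contradicting minimality'' is backwards: minimality of $G$ says the conclusion of Theorem~\ref{theorem3-2} \emph{holds} for $(\widetilde G,\widetilde A)$, which is not by itself a contradiction. One must show that the resulting structure of $\widetilde G$ is incompatible with $z\ne 1$, and that is the real content here. The paper first proves $G$ is quasisimple via the $F^*(G)$/component analysis --- a step your minimal-normal-subgroup route skips, so your $\widetilde G$ need not be simple and $\O_{\widetilde A}(\widetilde G)$ need not be trivial --- and then eliminates $Z(G)\ne 1$ using Gasch\"utz's Theorem (which also disposes of your ``edge case'' $S=AZ(G)$ cleanly), the identification $\widetilde A=\Omega_1(Z(\widetilde S))$, the irreducible action of $N_{\widetilde G}(\widetilde S)$ on $\Omega_1(\widetilde S)$ for Lie-type components (forcing $A$ to lift to a non-abelian group with $Z(G)\le A'$, against $A\cap Z(G)=1$), the triviality of the $p$-part of the Schur multipliers in conclusions (ii)--(iv), and a separate fixed-point-free argument for $J_3$. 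None of this case analysis appears in your sketch, and ``the fusion pattern on $S$ splits compatibly with the direct product decomposition'' is not a substitute for it.
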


\begin{proof}
Since strong closure inherits to quotient groups,
if $\O_A(G) \ne 1$ we may apply induction to $G/\O_A(G)$
and see that the asserted
conclusion holds. Thus we may assume $\O_A(G) = 1$, i.e.,
\begin{equation}
\text{$A \cap N$ is not a Sylow $p$-subgroup of $N$ for any
nontrivial $N \nor G$.} \label{simple-1}
\end{equation}
In particular,
\begin{equation}
O_{p'}(G) = 1. \label{simple-2}
\end{equation}

Let $G_0 = \gp{A^G}$ and assume $G_0 \ne G$. By (\ref{simple-1}),
$A$ is not a Sylow $p$-subgroup of $G_0$. Let $1 \ne A_0 \le A$ be
a minimal strongly closed subgroup of $G_0$. By the inductive
hypothesis $A_0$ is contained in a semisimple normal subgroup $N$
of $G_0$ satisfying the conclusions of the theorem. Since $N \nor
G$ it follows that $M = \gp{N^G}$ is a semisimple normal subgroup
of $G$ whose simple components are described by
Theorem~\ref{theorem3-1}. Since $A$ is minimal strongly closed in
$G$ and $1 \ne A_0 \le A \cap M$, $A \le M$ and the conclusion of
Theorem~\ref{theorem3-2} is seen to hold. Thus
\begin{equation}
\text{$G$ is generated by the conjugates of $A$.} \label{simple-3}
\end{equation}
By strong closure $A \cap O_p(G) \nor G$, hence by
(\ref{simple-1}), $A \cap O_p(G) = 1$.  Thus $[A,O_p(G)] \le A
\cap O_p(G) = 1$, i.e., $A$ centralizes $O_p(G)$.  Since $G$ is
generated by conjugates of $A$,
\begin{equation}
O_p(G) \le Z(G). \label{simple-4}
\end{equation}
By (\ref{simple-2}) and (\ref{simple-4}), $F^*(G)$ is a product of
commuting quasisimple components, $L_1,\dots,L_r$, each of which has a
nontrivial Sylow $p$-subgroup. Since $A$ acts faithfully on
$F^*(G)$, by Lemma~\ref{basic-facts} $A \cap F^*(G) \ne 1$.
The minimality of $A$ then forces $A \le F^*(G)$. Thus $A$ normalizes
each $L_i$, whence so does $G$ by (\ref{simple-3}). Now $A$ acts
nontrivially on one component, say $L_1$, so again by
Lemma~\ref{basic-facts}, $A \cap L_1 \ne 1$. By minimality of $A$
we obtain $A \le L_1 \nor G$, so by (\ref{simple-3})
$$
\text{$G = L_1$ is quasisimple (with center of order a power of
$p$).}
$$

Finally, assume $Z(G) \ne 1$ and let $\widetilde G = G/Z(G)$.
Since $A \ne S$ but $A \cap Z(G) = 1$, by Gasch\"utz's Theorem we
must have that $S \ne AZ(G)$ and so $\widetilde A$ is strongly
closed but not Sylow in the simple group $\widetilde G$. Since
$\ord{\widetilde G} < \ord G$, the pair $(\widetilde G,\widetilde
A)$ satisfy the conclusions of Theorem~\ref{theorem3-2}; in
particular, $\widetilde A =  \Omega_1(Z(\widetilde S))$ in all
cases. If $\widetilde G$ is a group of Lie type in conclusion (i),
again by Gasch\"utz's Theorem together with the irreducible action
of $N_{\widetilde G}(\widetilde S)$ on $\Omega_1(\widetilde S)$,
$\widetilde A$ must lift to a non-abelian group in $G$. In this
situation $Z(G) \le A'$, contrary to $A \cap Z(G) = 1$. In
conclusions (ii), (iii) and (iv) the $p$-part of the multipliers
of the simple groups are all trivial, so $Z(G) = 1$ in these
cases. In case (v) when $\widetilde G \iso J_3$ and $\widetilde A
= Z(\widetilde S)$ by the fixed point free action of an element of
order 8 in $N_G(S)$ on $S$ it again follows easily that $\tilde A$
must lift to the non-abelian group of order 27 and exponent 3 in
$G$, contrary to $A \cap Z(G) = 1$. This shows $Z(G) = 1$ and so
$G$ is simple.  The proof is complete.
\end{proof}

%%%%%%%%%%%%%%%%%%%%%%%%%%%%%%%%%%%%%%%%%%%%%%%%%%%%%%%%%%%%%%%%%%%%%%%%%%
\begin{lemma}
\label{A-is-noncyclic}
%%%%%%%%%%%%%%%%%%%%%%%%%%%%%%%%%%%%%%%%%%%%%%%%%%%%%%%%%%%%%%%%%%%%%%%%%%
$A$ is not cyclic and $S$ is non-abelian.
\end{lemma}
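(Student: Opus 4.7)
The plan is to rule out each possibility in turn by showing that if $A$ were cyclic or $S$ were abelian, then $G$ would already satisfy the full conclusion of Theorem~\ref{theorem3-2}, contradicting its status as a minimal counterexample.

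For the first claim, I would suppose $A$ is cyclic. Then $\Omega_1(A)$ is a subgroup of order $p$ that is itself strongly closed in $S$: by the strong closure of $A$, any $G$-conjugate of an element of order $p$ in $A$ that lies in $S$ lies in $A$, and by cyclicity such an element must lie in the unique subgroup $\Omega_1(A)$. The minimality of $A$ then forces $\ord A = p$. Applying Proposition~\ref{proposition3-4} with $K = G$ (simple by Lemma~\ref{G-is-simple}) yields three possibilities: $S$ is cyclic, $G \iso U_3(p)$, or $G$ is one of the sporadic simple groups listed in conclusions (iii) and (iv) of Theorem~\ref{theorem3-1}. I would then argue that each possibility places $(G,A)$ in one of the families of Theorem~\ref{theorem3-1}, with $L_1 = G$ and $A_1 = A$ of prime order --- the $U_3(p)$ case fitting (ii), the sporadic cases fitting (iii) or (iv) directly, and the cyclic-Sylow case fitting (i) via CFSG (every simple group with a cyclic Sylow $p$-subgroup of order at least $p^2$ is of Lie type in characteristic $\ne p$). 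In every instance $r = 1$, $A$ is elementary abelian of order $p$, and the simplicity of $G = L_1$ forces the remaining decorations to be trivial: since $A_F \le A \le G = L_1$ any such $A_F$ would act by inner automorphisms on $L_1$, contradicting the requirement in Theorem~\ref{theorem3-1} that $A_F$ induce outer automorphisms on $L_1$, so $A_F = 1$, and then $D = [D,A_F] = 1$ as well. The transitive action on the $L_i$'s is vacuous for $r=1$. So the full conclusion of Theorem~\ref{theorem3-2} holds, a contradiction.

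For the second claim, I would suppose $S$ is abelian. Since $G$ is simple (Lemma~\ref{G-is-simple}), Proposition~\ref{irreducible-action} applies: $S$ is homocyclic and the nontrivial strongly closed subgroups of $S$ are precisely the homocyclic subgroups of rank equal to $\mathrm{rank}(S)$. In particular $A$ is homocyclic of rank $\mathrm{rank}(S)$, so $\Omega_1(A) = \Omega_1(S)$ is again strongly closed. The minimality of $A$ then forces $A = \Omega_1(S)$, an elementary abelian group. If $\mathrm{rank}(S) = 1$ we are back in the cyclic case already handled; otherwise $A < S$ forces $S$ to be homocyclic of rank at least $2$ and exponent at least $p^2$. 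By CFSG the only simple groups with an abelian but non-elementary-abelian Sylow $p$-subgroup for odd $p$ are of Lie type in characteristic $\ne p$, so conclusion (i) of Theorem~\ref{theorem3-1} applies with $L_1 = G$ and $A_1 = A$, and exactly as in the previous paragraph simplicity forces $A_F = D = 1$. Once again the conclusion of Theorem~\ref{theorem3-2} is satisfied, contradicting the choice of $G$.

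The principal obstacle is verifying rigorously that each configuration arising from Proposition~\ref{proposition3-4} or from the abelian-Sylow reduction truly places $G$ in one of the families in Theorem~\ref{theorem3-1}. This reduces to invoking CFSG for the classification of simple groups with cyclic (respectively, abelian) Sylow $p$-subgroups, combined with the detailed Sylow-theoretic information already assembled in Propositions~\ref{sylow-structure}, \ref{irreducible-action}, \ref{normalizer-of-A-Lie}, and~\ref{normalizer-of-others-in-theorem3}.
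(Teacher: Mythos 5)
Your proposal is correct and follows essentially the same route as the paper: reduce the cyclic case to $\ord A = p$ via the strong closure of $\Omega_1(A)$ and invoke Proposition~\ref{proposition3-4}, and handle the abelian case via Proposition~\ref{irreducible-action} plus the CFSG fact that only cross-characteristic Lie type groups have homocyclic, non-elementary-abelian Sylow $p$-subgroups. The only difference is that you spell out the verification that each resulting configuration actually satisfies the conclusions of Theorems~\ref{theorem3-1} and \ref{theorem3-2} (with $r=1$, $D = A_F = 1$), which the paper leaves implicit in the phrase ``$G$ is not a counterexample.''
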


\begin{proof}
If $A$ is cyclic then since $\Omega_1(A)$ is also strongly closed,
the minimality of $A$ gives that $\ord A = p$.
Then $G$ is not a
counterexample by Proposition~\ref{proposition3-4}.
Likewise if
$S$ is abelian, by Proposition~\ref{irreducible-action} it is
homocyclic with $N_G(S)$ acting irreducibly and nontrivially on
$\Omega_1(S)$. By minimality of $A$ we must then have
$A = \Omega_1(S)$ and the exponent of $S$ is greater than $p$.
None of the sporadic or alternating groups or groups of Lie type in
characteristic $p$ contain such Sylow $p$-subgroups, so $G$ must
be a group of Lie type in characteristic $\ne p$. Again, $G$ is
not a counterexample, a contradiction.
\end{proof}

Note that because $A$ is a noncyclic normal
subgroup of $S$ and $p$ is odd, $A$ contains an abelian subgroup
$U$ of type $(p,p)$ with $U \nor S$. Furthermore,
$\ind{S}{C_S(U)} \le p$ so $U$ is contained in
an elementary abelian subgroup of $S$ of maximal rank.

Lemmas~\ref{G-not-alternating} to \ref{G-not-sporadic} now
successively eliminate the families of simple groups as
possibilities for the minimal counterexample.  The argument used
to eliminate the alternating groups is a prototype for the more
complicated situation of Lie type groups, so slightly more
expository detail is included.

%%%%%%%%%%%%%%%%%%%%%%%%%%%%%%%%%%%%%%%%%%%%%%%%%%%%%%%%%%%%%%%%%%%%%%%%%%
\begin{lemma}
\label{G-not-alternating}
%%%%%%%%%%%%%%%%%%%%%%%%%%%%%%%%%%%%%%%%%%%%%%%%%%%%%%%%%%%%%%%%%%%%%%%%%%
$G$ is not an alternating group.
\end{lemma}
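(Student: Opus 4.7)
The plan is as follows. Suppose for contradiction that $G\cong A_n$. Since $G$ is simple, $n\ge 5$, and by Lemma~\ref{A-is-noncyclic} the Sylow $p$-subgroup $S$ is non-abelian. For odd $p$ the Sylow $p$-subgroups of $A_n$ coincide with those of $S_n$, and such a Sylow is non-abelian precisely when $n\ge p^2$, so we may assume $n\ge p^2$. The goal is to show no proper nontrivial subgroup of $S$ can be strongly closed in $G$, contradicting the existence of $A$. The key fusion input is that for odd $p$ and any $k$ with $1\le kp\le n$, all elements of $A_n$ of cycle type $(p^k,1^{n-kp})$ form a single $A_n$-conjugacy class: the $S_n$-centralizer $(C_p\wr S_k)\times S_{n-kp}$ always contains an odd permutation (a product of $p$ transpositions arising from a transposition in $S_k$ when $k\ge 2$, since $p$ is odd, or a transposition in $S_{n-kp}$ when $n-kp\ge 2$, which holds as $n\ge p^2$). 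Hence strong closure drags entire cycle-type strata of $S$ into $A$.

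I first treat the base case $n=p^2$, where $S\cong C_p\wr C_p$. Write the base as $B=\langle c_1,\ldots,c_p\rangle$ with the $c_i$ disjoint $p$-cycles, and $\tau$ for the top generator cyclically permuting the $c_i$. Since $A\nor S$ is nontrivial, pick $z\in A\cap Z(S)$ of order $p$; because $Z(S)=\langle c_1c_2\cdots c_p\rangle$, $z$ has cycle type $(p^p)$. By strong closure, every element of $S$ of cycle type $(p^p)$ lies in $A$, in particular $\tau\in A$. Since $\tau^{c_1}\in S$ also has cycle type $(p^p)$, it too lies in $A$, and
\[
[\tau,c_1^{-1}] \;=\; \tau c_1^{-1}\tau^{-1}c_1 \;=\; c_2^{-1}c_1 \;\in\; A.
\]
This element has cycle type $(p^2)$, so every element of $S$ of cycle type $(p^2)$ lies in $A$, including $c_1^2c_2^{-1}$; then
\[
c_1 \;=\; (c_1^2c_2^{-1})(c_1c_2^{-1})^{-1} \;\in\; A.
\]
Thus $A$ contains a $p$-cycle, so by strong closure every $p$-cycle of $S$ lies in $A$, giving $B\le A$; combined with $\tau$ we obtain $\langle B,\tau\rangle=S\le A$, contradicting $A\subsetneq S$.

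For general $n\ge p^2$ the plan is to reduce to the base case. Fix a subgroup $P\cong C_p\wr C_p$ of $S$ supported on $p^2$ of the $n$ points; since $A_{p^2}\le A_n$, the intersection $A\cap P$ is strongly closed in $P$ with respect to $A_{p^2}$ (via $a^{A_{p^2}}\cap P\subseteq a^{A_n}\cap S\cap P\subseteq A\cap P$), so by the base case $A\cap P\in\{1,P\}$. Choosing $P$ to be one of the natural $P_2$-factor orbits of $S$ and using $A\cap Z(S)\ne 1$ to locate a nontrivial element of $A$ of the cycle type of some central element of $P$ (or, in the degenerate case, a $p$-cycle coming from a direct-factor $C_p$ of $S$), a short case analysis forces $A\cap P=P$ for some $P$. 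Thus $P\le A$, so $A$ contains a $p$-cycle, and strong closure places every $p$-cycle of $S$ in $A$, giving the elementary abelian base $E$ of $S$. Since products of commuting disjoint $p$-cycles from $E$ realize every cycle type $(p^k)$ with $k\le \mathrm{rank}(E)$, strong closure further forces the top generator of each tier of the iterated wreath structure of $S$ (which has cycle type $(p^{p^{i-1}})$ at level $i$, matched by a product of $p^{i-1}$ disjoint $p$-cycles drawn from $E$) to lie in $A$. Together these generate $S$, so $A=S$, the desired contradiction. The main obstacle is the careful case analysis producing $A\cap P=P$ for some $P$ when $n>p^2$ and the cycle-type matching at each level of the wreath structure, both of which are ultimately reduced to the explicit base-case computation above.
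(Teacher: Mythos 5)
Your core engine is the same as the paper's: for odd $p$ and $n\ge p^2$ all elements of cycle type $(p^k,1^{n-kp})$ form a single $A_n$-class, so strong closure pulls in entire cycle-type strata of $S$; one then reduces the number of $p$-cycles until $A$ contains a single $p$-cycle, whence $A$ contains every element of order $p$ in $S$, and $S=\Omega_1(S)$ gives $A=S$. Your base-case computation in $C_p\wr C_p$ and your endgame (matching each tier generator, of cycle type $(p^{p^{i-1}})$, against a product of $p^{i-1}$ disjoint $p$-cycles from $E$) are correct, and the endgame is exactly the ``alternative'' the paper mentions at the close of its proof ($S=\Omega_1(S)$, so $A=S$); the paper's primary endgame instead applies the minimal-counterexample hypothesis to $H=N_{A_n}(E)\iso Z_p\wr A_s$ and notes $A_s$ is not among the allowed components. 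The paper also first disposes of $p\nmid n$ by induction on $A_{n-1}$, a reduction your argument does not need.

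The genuine gap is the bridging step you flag yourself: ``a short case analysis forces $A\cap P=P$ for some $P$.'' As stated --- locate an element of $A$ whose cycle type matches that of a central element of $P$ --- it does not work directly, because the nontrivial elements of $Z(P)$ have cycle type $(p^p)$ while $A\cap Z(S)$ may contain only elements with more than $p$ cycles. For instance if $n=p^2+p$ then $Z(S)=\gp{c_1\cdots c_p}\times\gp{c_{p+1}}$ and $A\cap Z(S)$ could a priori be generated by $(c_1\cdots c_p)c_{p+1}$, of cycle type $(p^{p+1})$, which is realized by no element of $P$. What is missing is a preliminary ``reduce the number of cycles'' step: this is precisely the paper's argument, carried out inside the elementary abelian group $E$ generated by all $\lfloor n/p\rfloor$ disjoint $p$-cycles of the base (note $Z(S)\le E$, so $A\cap E\ne 1$). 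Taking $z=z_1\cdots z_r\in A\cap E$ with $r$ minimal and an even permutation $\sigma$ inverting one or two of the $z_i$ and fixing the rest, the product $zz^\sigma\in A\cap E$ has fewer cycles unless $r=1$. This produces a $p$-cycle in $A$ outright, after which the $C_p\wr C_p$ base case and the detour through $P$ become unnecessary. So your architecture is repairable, but the repair essentially is the paper's argument, and once inserted it collapses your two-stage reduction into a single step.
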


\begin{proof}
Assume $G \iso A_n$ for some $n$.  Since $S$ is non-abelian, $n
\ge p^2$. If $p \notdivides n$ then  $S$ is contained in a
subgroup isomorphic to $A_{n-1}$, which contradicts the minimality
of $G$ (no alternating group satisfies the conclusions in
Theorem~\ref{theorem3-1}). Thus $n = ps$ for some $s \in \NN$ with
$s \ge p$.

Let $E$ be a subgroup of $S$ be generated by $s$ commuting
$p$-cycles. Since $E$ contains a conjugate of every element of
order $p$ in $G$, $A \cap E \ne 1$. We claim $E \le A$.
Let $z = z_1 \cdots z_r \in A \cap E$
be a product of commuting $p$-cycles
$z_i$ in $E$ with $r$ minimal. If $r \ge 3$ there is an element
$\sigma \in A_n$ that inverts both $z_r$ and $z_{r-1}$ and
centralizes all other $z_i$; and if $r = 2$, since $n \ge 3r$
there is an element $\sigma \in A_n$ that inverts $z_2$ and
centralizes $z_1$. In either case, by strong closure
$z^\sigma \in A \cap E$ and
$zz^\sigma = z_1^2 \cdots z_{r-2}^2$ or $z_1^2$ respectively.
Hence $zz^\sigma$ is an element of $A \cap E$ that
is a product of fewer commuting $p$-cycles, a contradiction.
This shows $A$ contains a $p$-cycle, hence by strong closure $E \le A$.
Now $A_n$ contains a subgroup $H$ with
\begin{equation}
S \le H = N_{A_n}(E) \qquad \text{and} \qquad H \iso Z_p \wr A_s.
\label{alt-1}
\end{equation}
By our inductive assumption $H$ contains a normal subgroup
$N = \O_A(H)$ with $E \le N$
such that $A \cap N$ is a Sylow $p$-subgroup of $N$ and
$H/N$ a product of simple components described in
Theorem~\ref{theorem3-1}. Since $H$ is a split extension over $E$
and every element of $H$ of order $p$ is conjugate to an element
of $E$, by strong closure $A \ne E$. Since $H/E \iso A_s$ is not
one of the simple groups in Theorem~\ref{theorem3-1} it follows
that $N = H$ (in the cases where $s = 3$ or 4 as well), contrary
to $A \ne S$. This contradiction establishes the lemma.

Alternatively, one could argue from (\ref{alt-1}) and induction
that $S = \Omega_1(S)$, and so again $S = A$ by strong closure, a
contradiction.
\end{proof}

%% \medskip

%% We next consider the groups of Lie type in characteristic $\ne p$.

%%%%%%%%%%%%%%%%%%%%%%%%%%%%%%%%%%%%%%%%%%%%%%%%%%%%%%%%%%%%%%%%%%%%%%%%%%
\begin{lemma}
\label{G-not-classical}
%%%%%%%%%%%%%%%%%%%%%%%%%%%%%%%%%%%%%%%%%%%%%%%%%%%%%%%%%%%%%%%%%%%%%%%%%%
$G$ is not a classical group (linear, unitary, symplectic,
orthogonal) over $\FF_q$, where $q$ is a prime power not divisible
by $p$.
\end{lemma}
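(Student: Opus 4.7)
The plan is to assume for contradiction that $G$ is a classical simple group over $\FF_q$ with $p\nmid q$ and to derive $A=S$, contradicting $A\subsetneq S$. By Lemma~\ref{A-is-noncyclic}, $S$ is non-abelian, so Proposition~\ref{sylow-structure} gives $S=S_TS_W$ with $S_T$ a nontrivial proper normal homocyclic abelian subgroup of rank $r_{m_0}$, and $1\ne S_W$ isomorphic to a Sylow $p$-subgroup of the Weyl group $W$ of the untwisted version. The starting observation is that $A\cap\Omega_1(S_T)\ne 1$: by part~(5) of Proposition~\ref{sylow-structure}, any $a\in A$ of order $p$ has a $G$-conjugate $t\in\Omega_1(S_T)\le S$, so strong closure forces $t\in A$.

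The heart of the argument is to bootstrap $A\cap\Omega_1(S_T)\ne 1$ up to $\Omega_1(S_T)\le A$, and then onwards to $A=S$. For the first step I would locate a proper subgroup $H$ with $S_T\le H\le G$ realised as the Levi factor of a suitable parabolic, so that $H$ is a product of smaller-rank classical groups whose Sylow $p$-subgroup is precisely $S_T$ (hence abelian). Applying the inductive hypothesis of Theorem~\ref{theorem3-2} to the simple components of $H$, combined with the irreducible action provided by Proposition~\ref{irreducible-action}, pins $A\cap S_T$ down to exactly $\Omega_1(S_T)$. Invoking part~(5) of Proposition~\ref{sylow-structure} a second time, every element of order $p$ in $S$ is $G$-conjugate to an element of $\Omega_1(S_T)\le A$, so strong closure yields $\Omega_1(S)\le A$. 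The remaining step is to push $A$ past $\Omega_1(S)$: using that $A\cap S_T\supseteq\Omega_1(S_T)$ is $S_W$-invariant and strongly closed, a fusion argument plays elements of higher order in $S_T$ against their $N_G(S_T)$-conjugates and against $G$-conjugates produced inside a parabolic Levi factor to show, by induction on $\ord{\Phi_{m_0}(q)}_p$, that in fact $A\supseteq S_T$. A parallel argument on cosets of $S_T$ inside $S$, exploiting the Weyl-type structure of $S_W$, then yields $A=S$.

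The main obstacle will be constructing the right parabolic/Levi $H$ in the middle step: one must choose $H$ so that $A\cap H$ is nontrivial yet not already a Sylow $p$-subgroup of its normal closure in $H$, so that the inductive hypothesis genuinely feeds back as a nontrivial structural conclusion rather than the Sylow alternative. Each of the four classical families — linear, unitary, symplectic, orthogonal — and especially the twisted variants $SU_n(q)$ and ${}^2D_n(q)$, will likely need separate treatment, because their Weyl-group actions on $S_T$ and their cyclotomic divisibility patterns differ from the generic untwisted case and thereby constrain which Levi subgroups are available, and how orbits of $S_W$ on $\Omega_1(S_T)$ fuse under $G$.
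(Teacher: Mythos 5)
Your skeleton --- first $A\cap\Omega_1(S_T)\ne 1$, then $\Omega_1(S_T)\le A$, then $\Omega_1(S)\le A$ via Proposition~\ref{sylow-structure}(5) and strong closure, then escalation to $S_T\le A$ and $A=S$ --- matches the paper's proof up to and including $\Omega_1(S)\le A$. The genuine gap is in the escalation step, which is the heart of the lemma. An ``induction on $\ord{\Phi_{m_0}(q)}_p$'' has no group-theoretic realization: no subgroup or quotient of $G$ has a Sylow $p$-torus of smaller exponent, so there is nothing to induct on; and fusion alone cannot help, because every $G$-conjugate of an element of order $p$ again has order $p$, so playing $\Omega_1(S)$ against its conjugates never puts an element of order $p^2$ into $A$. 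The paper's mechanism is different and essential: after showing that $A$ covers $S/S_T$ (using that $S\le H\iso\isom(V_1)\wr A_s$ splits over $S_T$ and that all elements of order $p$ are conjugate into $S_T$, forcing $A\nle S_T$, together with induction in $H$), one takes $x\in A$ with $u_1^x=u_2$ and observes that the commutator $u_1u_2^{-1}=[u_2,x]$ lies in $A$ because $A\nor S$. This manufactures an element of $A$ of \emph{full} order $d=\ord{\Phi_{m_0}(q)}_p$ sitting inside the smaller classical group $Y=G\cap\isom(V_1\perp\cdots\perp V_{s-1})$; the inductive hypothesis applied to $A\cap Y$ in $Y$ (either $A$ contains a Sylow $p$-subgroup of $Y$, or the Sylow is homocyclic and $A\cap Y$ is homocyclic of full rank) then forces $A\cap Y$ to contain every element of order $d$ in $S\cap Y$, whence $u_1\in A$ and $S_T\le A$ by conjugacy of the $u_i$. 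Without this commutator device and the downward induction into $Y$, your argument stalls at $\Omega_1(S)\le A$, which is strictly smaller than $S$ whenever $S_T$ has exponent $>p$.

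A second omission: the linear groups with $p\divides q-1$ and unitary groups with $p\divides q+1$ require separate treatment, and your uniform ``parabolic Levi factor'' device is the wrong tool in cross characteristic (the relevant subgroups are stabilizers of orthogonal decompositions and centralizers of semisimple elements, not parabolics). In those exceptional families Proposition~\ref{sylow-structure}(5) pertains to the universal group, the center of $SL^\epsilon(V)$ obstructs conjugating order-$p$ elements of the simple group into $S_T$, so one must lift $A$ to the quasisimple cover; and the surviving low-rank cases $SL_3^\epsilon(q)$ and $SL_4^\epsilon(q)$ with $p=3$ leave no room for a proper $Y$ of the same type. The paper disposes of $SL_4^\epsilon(q)$ via the centralizer of $\text{diag}(\zeta,\zeta,\zeta,1)$, and of $SL_3^\epsilon(q)$ by the explicit computation that $u_1w$ has order $3$, so $u_1=(u_1w)w^{-1}\in\Omega_1(S)\le A$. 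Some such bespoke base-case arguments are unavoidable and are missing from your outline.
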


\begin{proof}
Assume $G$ is a classical simple group. Following the
notation in \cite[Theorem 4.10.2]{GLS3},
let $V$ be the classical vector space
associated to $G$ and let $X = \isom(V)$.
We may assume $\dim V \ge 7$ in the orthogonal case because of
isomorphisms of lower dimensional orthogonal groups with other
classical groups (the dimension is over $\FF_{q^2}$ in the unitary case).
The tables in \cite[Chapter 4]{KL} are helpful references in this
proof.

First consider when $G$ is neither a linear group with $p$
dividing $q-1$ nor a unitary group
with $p$ dividing $q+1$.
This restriction implies that
$p \notdivides \ind{X}{X'}$ and there is
a surjective homomorphism $X' \rightarrow G$ whose kernel is a
$p'$-group. Thus we may do calculations in $X$ in place of $G$
(taking care that conjugations are done in $X'$).
Proposition~\ref{sylow-structure} is realized explicitly in
this case as follows: There is a decomposition
$$
V = V_0 \perp V_1 \perp \cdots \perp V_s
$$
of $V$ ($\perp$ denotes direct sum in the linear case), where
$\isom(V_0)$ is a $p'$-group, the cyclic group of order $p$ has an
orthogonally indecomposable representation on each other $V_i$,
the $V_i$ are all isometric, and a Sylow $p$-subgroup of
$\isom(V_i)$ is cyclic. Furthermore, $X'$ contains a subgroup
isomorphic to $A_s$ permuting $V_1,\dots,V_s$ and the stabilizer
in $X'$ of the set $\{V_1,\dots,V_s\}$ contains a Sylow
$p$-subgroup of $X$. In other words, we may assume
\begin{equation}
S \le H \iso \isom(V_1) \wr A_s. \label{classical-1}
\end{equation}
In the notation of Proposition~\ref{sylow-structure},
let $S \cap \isom(V_i) = \gp{u_i}$,
where $u_i$ acts trivially on $V_j$ for all $j \ne i$.
Then $S_T = \gp{u_1,\dots,u_s}$ and $S_W$ is
a Sylow $p$-subgroup of $A_s$.
Since $S$ is non-abelian,
$S_W \ne 1$ and so $s \ge p \ge 3$. Let $z_i$ be an element of
order $p$ in $\gp{u_i}$, and let
$$
E = \gp{z_1,\dots,z_s} = \Omega_1(S_T) \iso E_{p^s}.
$$
The faithful action of $S_W$ on $S_T$ forces $Z(S) \le S_T$, so $A
\cap E \ne 1$.

We claim $E \le A$. As in the alternating group case, let $z$ be a
nontrivial element in $A \cap E$ belonging to the span of $r$ of
the basis elements $z_i$ in $E$ with $r$ minimal. After
renumbering and replacing each $z_i$ by another generator for
$\gp{z_i}$ if necessary, we may assume $z = z_1 \cdots z_r$.
If $r \ge 3$ there is an element $\sigma \in G$ that acts trivially on
$z_1,\dots,z_{r-2}$ and normalizes but does not centralize
$\gp{z_{r-1},z_r}$; and if $r =2$, since $s \ge 3$ there is an
element $\sigma \in G$ that centralizes $z_1$ and normalizes but
does not centralize $\gp{z_2}$. In both cases $z^\sigma z^{-1}$ is
a nontrivial element of $A \cap E$ that is a product of fewer
basis elements.
This shows $z_i \in A$ for some $i$ and so $E \le A$
since all $z_j$ are conjugate in $G$.

By Proposition~\ref{sylow-structure}(5) in this setting, every
element of order $p$ in $G$ is conjugate to an element of $E$.
Since the extension in (\ref{classical-1}) is split, $A \not\le
S_T$. By the overall induction hypothesis applied in $H$ (or
because a Sylow $p$-subgroup of $A_s$ is generated by elements
of order $p$), it follows that $A$ covers $S/S_T$. We may
therefore choose a numbering so that for some $x \in A$, $u_1^x =
u_2$.  Thus
$$
u = u_1 u­_2^{-1} = [u_2,x] \in
A \cap \isom(V_1 \perp \cdots \perp V_{s-1}).
$$
Let $Y = G \cap \isom(V_1 \perp \cdots \perp V_{s-1})$ so that $Y$
is also a classical group of the same type as $G$ over $\FF_q$.
Note that the dimension of the underlying space on which $Y$ acts
is at least $2(s-1)$ by our initial restrictions on $q$. Since $Y$
is proper in $G$, by induction applied using a minimal strongly
closed subgroup $A_0$ of $A \cap Y$ in $Y$ we obtain the
following: either $A_0$ (hence also $A$) contains a Sylow
$p$-subgroup of $Y$, or the Sylow $p$-subgroups of $Y$ are
homocyclic abelian with $A_0 \cap Y$ elementary abelian of the
same rank as a Sylow $p$-subgroup of $Y$. Furthermore, in the
latter case a Sylow $p$-normalizer acts irreducibly on $A_0$, and
hence the strongly closed subgroup $A \cap Y$ is also homocyclic
abelian. Since $A \cap Y$ contains the element $u$ of order $d$,
where $d = \ord{u_1}$, in either case $A \cap Y$ contains all
elements of order $d$ in $S \cap Y$. Since $u_1 \in S \cap Y$ this
proves $u_1 \in A$. By (\ref{classical-1}) all $u_i$ are conjugate
in $G$ to $u_1$, hence $S_T \le A$ and so $A = S$ a contradiction.

It remains to consider the cases where $V$ is of linear or unitary
type and $p$ divides $q-1$ or $q+1$ respectively (denoted as usual
by $p \divides q-\epsilon$). Now replace the simple group $G$ by
its universal quasisimple covering $SL^\epsilon(V)$. Likewise
replace $A$ by the $p$-part of its preimage. Thus $A$ is a
noncyclic (hence noncentral) strongly closed $p$-subgroup of
$SL^\epsilon(V)$. In this situation $S = S_T S_W$ where we may
assume $S_T$ is the group of $p$-power order diagonal matrices of
determinant 1 (over $\FF_{q^2}$ in the unitary case), and $S_W$ is
a Sylow $p$-subgroup of the Weyl group $W$ of permutation matrices
permuting the diagonal entries. Furthermore, $S_T$ is homocyclic
of exponent $d$, where $d = \ord{q - \epsilon}_p$, and is a trace
0 submodule of the natural permutation module for $W$ of exponent
$d$ and rank $m = \dim V$. Since $A$ is noncyclic, it contains a
noncentral element $z$ of order $p$; and by
Proposition~\ref{sylow-structure}, $z$ is conjugate to an element
of $S_T$, i.e., is diagonalizable. Arguing as above with $E =
\Omega_1(S_T)$ we reduce to the case where $z$ is represented by
the matrix $\text{diag}(\zeta,\zeta^{-1},1,\dots,1)$ for some
primitive $p^{\text{th}}$ root of unity $\zeta$. The action of $W$
again forces $E \le A$. Again, every element of order $p$ in $S$
is conjugate in $G$ to an element of $E$, so by strong closure
\begin{equation}
\Omega_1(S) \le A. \label{classical-2}
\end{equation}

Consider first when $m \ge 5$.
Then $C_G(z)$ contains a quasisimple component
$L \iso SL_{m-2}^\epsilon(q)'$.
Since $L$ contains a conjugate of $z$, the
inductive argument used in the general case shows that $A \cap L$
contains a diagonal matrix element of order $d$, hence contains such an
element centralizing an $n-2$ dimensional subspace.
The strong closure of $A$ then again yields $S_T \le A$; and
as before by induction or because $S = S_T \Omega_1(S)$
we get $A = S$, a contradiction.

Thus $\dim V \le 4$, and since $S_W \ne 1$ we must have $p = 3$.
If $G \iso SL_4^\epsilon(q)$ then
let $z$ be represented by the diagonal matrix
$\text{diag}(\zeta,\zeta,\zeta,1)$, where $\zeta$ is a primitive
$3^{\text{rd}}$ root of unity.
Then $C_G(z)$ contains a Sylow 3-subgroup of $G$ and a
component of type $SL_3^\epsilon(q)$, so the preceding argument
leads to a contradiction.

Finally, consider when $G \iso SL_3^\epsilon(q)$.
The Sylow 3-subgroups of $SL_3^\epsilon(q)$
are described in the proof of
Proposition~\ref{normalizer-of-others-in-theorem3}.
In both instances $S_T$ is
homocyclic of rank 2 and exponent $d$ with generators $u_1,u_2$,
and with $S_W = \gp w \iso Z_3$ acting by
$$
u_1^w = u_2 \quad \text{and} \quad u_2^w = u_1^{-1} u_2^{-1}.
$$
Thus $u_1 w$ has order 3, and so $u_1 = (u_1 w)w^{-1} \in
\Omega_1(S)$. By (\ref{classical-2}), this again forces $A = S$,
which gives the final contradiction.
\end{proof}

%\bigbreak
%\eject

%%%%%%%%%%%%%%%%%%%%%%%%%%%%%%%%%%%%%%%%%%%%%%%%%%%%%%%%%%%%%%%%%%%%%%%%%%
\begin{lemma}
\label{G-not-exceptional}
%%%%%%%%%%%%%%%%%%%%%%%%%%%%%%%%%%%%%%%%%%%%%%%%%%%%%%%%%%%%%%%%%%%%%%%%%%
$G$ is not an exceptional group of Lie type (twisted or untwisted)
over $\FF_q$, where $q$ is a prime power not divisible by $p$.
\end{lemma}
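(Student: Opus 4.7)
The plan parallels the strategy used for classical groups in Lemma \ref{G-not-classical}, with case analysis driven by the restricted list of configurations where non-abelian Sylow $p$-subgroups arise. Assume for contradiction that $G$ is a simple exceptional group of Lie type over $\FF_q$ with $p \notdivides q$. Since $S$ is non-abelian by Lemma \ref{A-is-noncyclic}, Proposition \ref{sylow-structure}(3) requires $p \divides \ord W$, where $W$ is the Weyl group of the corresponding untwisted group. This forces $p = 3$ if $G$ is of type $G_2$, ${}^3D_4$, $F_4$, or ${}^2F_4$; $p \in \{3,5\}$ if $G$ is of type $E_6^\epsilon$; and $p \in \{3,5,7\}$ if $G$ is of type $E_7$ or $E_8$. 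The Suzuki and Ree families yield only cyclic or cross-characteristic homocyclic Sylows in the admissible range, so they are eliminated by Lemma \ref{A-is-noncyclic} together with Proposition \ref{irreducible-action}. The subcase $G \iso G_2(q)$ is disposed of immediately: by Proposition \ref{normalizer-of-others-in-theorem3}(3) the only nontrivial proper strongly closed subgroup of $S$ has order $3$, hence is cyclic, contradicting Lemma \ref{A-is-noncyclic}.

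For each remaining case I would mimic the inductive template from Lemma \ref{G-not-classical}. Write $S = S_T \sdp S_W$ (or use the variant from Proposition \ref{sylow-structure}(4) for ${}^3D_4(q)$ with $p = 3$), and set $E = \Omega_1(S_T)$. Since $S_W$ acts faithfully on $S_T$ by Proposition \ref{sylow-structure}(6), we have $Z(S) \le S_T$, hence $A \cap E \ne 1$. The Weyl group acts on $E$ via its natural reflection representation over $\FF_p$, and for each exceptional type this action is transitive on a spanning set of root vectors of order $p$. Using the minimal-support argument from the classical and alternating cases---an element $z = z_1 \cdots z_r \in A \cap E$ with $r$ minimal is reduced via conjugation by a reflection of $W$ inverting some but not all $z_i$---together with this transitivity, I would conclude $E \le A$.

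Next I would invoke the Borel--de Siebenthal classification to select, for each exceptional $G$, an element $z \in E$ whose centralizer $C_G(z)$ contains a classical component $L$ of smaller rank: e.g., $D_5(q)$ or $A_5(q)A_1(q)$ inside $E_6(q)$; $A_7(q)$ or $D_6(q)A_1(q)$ inside $E_7(q)$; $D_8(q)$, $A_8(q)$, or $E_7(q)A_1(q)$ inside $E_8(q)$; $B_4(q)$ or $C_3(q)A_1(q)$ inside $F_4(q)$; and analogous subsystem subgroups in the twisted cases. Since $L$ is a classical group already covered by Lemma \ref{G-not-classical}, the inductive hypothesis applied to a minimal strongly closed subgroup of $A \cap L$ in $L$ yields either that it is Sylow in its normal closure in $L$, or that $L$ falls into one of the permitted families---but $L$ is classical and the list in Theorem \ref{theorem3-1} contains no classical simple group with an abelian non-elementary-abelian Sylow that would let $G$ itself escape being a counterexample. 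Thus $A \cap L$ contains toral elements of maximal $p$-power exponent, and $G$-conjugation sweeps these over all of $S_T$. Strong closure combined with the normalizer structure of $S_T$ in $G$ then forces $S_W \le A$ as well, giving $A = S$, the desired contradiction.

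The main obstacle will be the structurally atypical configurations: ${}^3D_4(q)$ with $p = 3$ (where Proposition \ref{sylow-structure}(4) forbids the usual $S_T \sdp S_W$ decomposition and $S_T$ is only of rank $2$), $F_4(q)$ and ${}^2F_4(q)$ with $p = 3$ (where $\ord{S_W}$ is comparatively large), and $E_7(q)$, $E_8(q)$ with $p = 3$ (where $S_W$ is itself non-abelian). For these I expect to need the explicit Sylow-subgroup descriptions from \cite[Section 10]{GL} or \cite[Section 4.10]{GLS3}, together with a verification that the Borel--de Siebenthal centralizers of the selected toral elements contain classical pieces of sufficient rank to drive the induction, and perhaps an ad hoc direct inspection of Sylow normalizers in the smallest lingering subcases (such as $F_4(q)$ with $\ord q_3$ small, where the centralizer $L$ may have rank near the threshold of Lemma \ref{G-not-classical}'s dimension restrictions).
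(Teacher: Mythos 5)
Your opening reductions are sound and match the paper: $p$ must divide $|W|$, so $p\in\{3,5,7\}$ with the type-by-type restrictions you list; $G_2(q)$ dies immediately by Proposition~\ref{normalizer-of-others-in-theorem3}(3) plus Lemma~\ref{A-is-noncyclic}; and $\Omega_1(S_T)\le A$ follows from $Z(S)\le S_T$ together with the action of $W$ on $\Omega_1(S_T)$ (the paper simply invokes irreducibility of this $W$-module rather than a minimal-support reduction, but either works). The real problems are in your main inductive step. You propose to take a classical component $L$ of $C_G(z)$ for $z\in E$ and conclude that ``$A\cap L$ contains toral elements of maximal $p$-power exponent.'' This does not follow. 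If $L$ has an \emph{abelian} Sylow $p$-subgroup, the inductive hypothesis only yields that $A\cap L$ is homocyclic of full rank --- which is already satisfied by $\Omega_1(S\cap L)\le E\le A$, so you learn nothing new and no element of exponent $>p$ is produced. Your parenthetical justification --- that the list in Theorem~\ref{theorem3-1} ``contains no classical simple group with an abelian non-elementary-abelian Sylow'' --- is simply false: conclusion~(i) consists precisely of such groups; they are the generic obstruction. The paper avoids this trap in two ways you do not: for $p=7$, $p=5$ and several $p=3$ subcases it exhibits a \emph{proper} subgroup (e.g.\ $A_8^{\pm}(q)\le E_8(q)$, $SU_3(2^n)\le{}^2F_4(2^n)'$, $F_4(q)\le E_6(q)$) sharing an entire Sylow $p$-subgroup with $G$, which kills the case outright by minimality; and where it does use centralizer components it deliberately chooses ones with \emph{non-abelian} Sylow $p$-subgroups (e.g.\ $SL_3^\epsilon(q)*SL_3^\epsilon(q)$ in $F_4(q)$, $SU_8(q)$ in $E_7(q)$, ${}^2E_6(q)*SU_3(q)$ in $E_8(q)$), so that induction forces the full Sylow of the component into $A$. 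In the one place the paper does exploit an abelian-Sylow component ($SL_6^\epsilon(q)$ in $E_6^\epsilon(q)$ for $p=5$), the relevant Sylow is in fact non-abelian for that congruence, and the argument is supplemented by a second subgroup $SL_3^\epsilon(q)*SL_3^\epsilon(q)*SL_3^\epsilon(q)$.

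Two further gaps: the passage from $S_T\le A$ to $S_W\le A$ is asserted via ``the normalizer structure of $S_T$'' but never argued; the paper gets it either by applying induction inside $N_G(T)$ (using that the Weyl groups $U_4(2)\cdot2$, $Z_2\times S_6(2)$, $2\cdot O_8^+(2)\cdot2$ are not on the conclusion list) followed by irreducibility of $W$ on $S_T/\Phi(S_T)$, or by locating components that are permuted by a generator of $S_W$ and showing that generator must lie in $A$. And ${}^3D_4(q)$ with $p=3$, which you only flag as an obstacle, genuinely does not fit your template: the paper's argument there runs through the subgroup $G_2(q)$ of fixed points of the triality automorphism, the uniqueness of the strongly closed subgroup $Z(S_0)$ of order $3$ in it, fusion of $Z(S_0)$ into $S-S_0$ visible in $PGL_3^\epsilon(q)$, and the fact that ${}^3D_4(q)$ has $3$-rank $2$. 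As it stands the proposal is a plausible outline of roughly half of the proof, but the central inductive mechanism is broken in exactly the cases where the components have abelian Sylow $p$-subgroups, and the hardest exceptional configurations are deferred rather than handled.
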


\begin{proof}
Assume $G = \L(q)$ is an exceptional group of Lie type over $\FF_q$
with $p \notdivides q$.
Throughout this proof we rely on the Sylow structure for $G$
as described in Proposition~\ref{sylow-structure}.
It shows, in particular, that we
need only consider when the odd prime $p$ divides both order of
the Weyl group of the untwisted group corresponding to $G$ and
$pm_0 \divides m$ for some
$m \in \O(G)$; in all other cases the proposition gives that the Sylow
$p$-subgroup is homocyclic abelian. The cyclotomic factors
$\Phi_m(q)$ and their ``multiplicities'' $r_m$
for each of the exceptional groups are listed
explicitly in \cite[Table~10:2]{GL}.
Note that $3 \divides q^2-1$, so in this
case $m_0$ is 1 or 2; also, $5 \divides q^4 - 1$, so in this case
$m_0$ is 1, 2, or 4; finally, $7 \divides q^6 - 1$, so in this
case $m_0$ is 1, 2, 3, or 6.
In the notation of Proposition~\ref{sylow-structure},
except in the case ${}^3 D_4(q)$ we have
$S = S_T S_W$ (split extension) where $S_T$ is a
normal homocyclic abelian subgroup of
exponent $\ord{\Phi_{m_0}(q)}_p$ and rank $r_{m_0}$, and
$\ord{S_W} = p^b$, where $b$ is defined in
(\ref{weyl-order}).

The exceptional groups are listed in Table~3A along with
$p$ dividing the order of the Weyl group,
permissible $m_0$ such that $m = p^a m_0$ for some $m \in \O(G)$ with $a \ge 1$,
and the corresponding $r_{m_0}$ and $p^b$ for each of these
(in the case of ${}^3 D_4(q)$ we define $3^b$ so that
$\ord S = (\ord{\Phi_{m_0}(q)}_p)^{r_{m_0}} 3^b$).

\begin{figure}[!t]
$$
\vbox{\tabskip=20pt \halign{\hfil #\hfil & \hfil # \hfil & # \hfil \cr
\multispan3 \hfil \bf Table 3A \hfil \cr
\noalign{\bigskip}
Group & Prime $p$ & Permissible $(m_0,r_{m_0},p^b)$   \cr
\noalign{\smallskip}
\noalign{\hrule}
\noalign{\medskip}
${}^3 D_4(q)$ & 3 & \quad $(1,2,3^2)$, $(2,2,3^2)$     \cr
\noalign{\smallskip}
$G_2(q)$     & 3 &  \quad $(1,2,3)$, $(2,2,3)$        \cr
\noalign{\smallskip}
$F_4(q)$     & 3 & \quad $(1,4,3^2)$, $(2,4,3^2)$     \cr
\noalign{\smallskip}
${}^2 F_4(2^n)'$  & 3 & \quad $(2,2,3)$                \cr
\noalign{\smallskip}
$E_6(q)$     & 3 & \quad $(1,6,3^4)$, $(2,4,3^2)$     \cr
             & 5 & \quad $(1,6,5)$                    \cr
\noalign{\smallskip}
${}^2 E_6(q)$ & 3 & \quad $(1,4,3^2)$, $(2,6,3^4)$    \cr
              & 5 & \quad $(2,6,5)$                   \cr
\noalign{\smallskip}
$E_7(q)$  & 3 & \quad $(1,7,3^4)$, $(2,7,3^4)$      \cr
          & 5 & \quad $(1,7,5)$, $(2,7,5)$           \cr
          & 7 & \quad $(1,7,7)$, $(2,7,7)$           \cr
\noalign{\smallskip}
$E_8(q)$  & 3 & \quad $(1,8,3^5)$, $(2,8,3^5)$        \cr
          & 5 & \quad $(1,8,5^2)$, $(2,8,5^2)$, $(4,4,5)$  \cr
          & 7 & \quad $(1,8,7)$, $(2,8,7)$           \cr
}}
$$
\end{figure}

%
%%%%%%%%%%%%%%%%%%%%%%%%%%%%%%%%%%%%%%%%%%%%%%%%%%%%%%%%%%%%%%%%%%%%%%%%
%%
%%   Exceptional and some other Weyl groups are:
%%
%%  $G_2$ and twisted $F_4$ are dihedral of order 12
%%  $F_4$ and twisted $E_6$ are $2^{1+4} : (S_3 \times S_3)$
%%  $E_6$ is $U_4(2) \cdot 2$
%%  $E_7$ is $Z_2 \times S_6(2)$
%%  $E_8$ is $2 \cdot O_8^+(2) \cdot 2$
%%
%%  $A_8$ is $S_9$                  (3-part of order is $3^4$)
%%  twisted $A_8$ is $Z_2 \wr S_9$  (3-part of order is $3^4$)
%%
%%  $A_7$ is $S_8$                  (3-part of order is $3^2$)
%%  twisted $A_7$ is $Z_2 \wr S_8$  (3-part of order is $3^2$)
%%
%%  $B_4(q)$ is $Z_2 \wr S-4$      (3-part of order is $3$)
%%
%%%%%%%%%%%%%%%%%%%%%%%%%%%%%%%%%%%%%%%%%%%%%%%%%%%%%%%%%%%%%%%%%%%%%%%%%%
%
We consider all these cases, working from largest to
smallest --- the latter requiring more delicate examination.
Table~4-1 in \cite{GL} is used frequently without specific citation: it
lists all the ``large'' subgroups of various families of Lie type
groups that we shall employ. It is helpful to keep in mind the
description of the order of a Sylow $p$-subgroup in
Proposition~\ref{sylow-structure} when comparing the $p$-part of
$\ord G$ to that of its Lie-type subgroups.

\medskip
\noindent
{\bf Case $\pmb{p=7}$}: $E_8(q)$ contains both $A_8(q)$ and ${}^2
A_8(q)$ and so, by inspection of orders, shares a Sylow 7-subgroup
with it in the cases (1,8,7) and (2,8,7) respectively (the Sylow
7-subgroup order is seen to be $7 \cdot \ord{q-\epsilon}^8_7$ for
each group). Likewise $E_7(q)$ contains both $A_7(q)$ and ${}^2
A_7(q)$ and so shares a Sylow 7-subgroup with it in the cases
(1,7,7) and (2,7,7) respectively. By minimality of $G$ all the $p=7$
cases are eliminated.

\medskip
\noindent
{\bf Case $\pmb{p=5}$}: The same containments in the preceding
paragraph for $E_7(q)$ show these groups share a Sylow 5-subgroup
in cases (1,7,5) and (2,7,5).
Similarly, $E_8(q)$ contains $SU_5(q^2)$
and shares a Sylow 5-subgroup with it in the case (4,4,5). By
minimality these $p=5$ cases are eliminated.

Assume $G \iso E_8(q)$. Using the same large subgroups as in the
$p=7$ case, the Sylow 5-subgroup $S$ has a subgroup $S_0$ of index
5 that lies in a subgroup $G_0$ of $G$ of type $A_8(q)$ or ${}^2
A_8(q)$ according to whether we are in cases
$(1,8,5^2)$ or $(2,8,5^2)$ respectively.
By Proposition~\ref{sylow-structure} applied to
$G_0$ it follows that $S_0$ is non-abelian;
and since $\ord A > 5$, $A \cap S_0 \ne 1$.
Thus by induction applied to a minimal
strongly closed subgroup $A_0 \le A \cap S_0$ in $G_0$ we obtain
$S_0 \le A$. Moreover, by Proposition~\ref{sylow-structure} it
follows that $S_T \le S_0$. Since $A$ is non-abelian and since the
normalizer of a Sylow 5-subgroup of the Weyl group of $E_8$ acts
irreducibly on the Sylow 5-subgroup of $W$ (which is abelian of
type (5,5)), the strongly closed subgroup $A$ containing $S_T$
cannot have index 5 in $S$, a contradiction.
This eliminates all $E_8(q)$ cases for $p=5$.

Adopting the notation following Proposition~\ref{sylow-structure},
assume $G \iso E_6^\epsilon(q)$, where $5 \divides q-\epsilon$ and
$S_T$ has rank 6 and index 5 in $S$. Then $G$ shares the Sylow
5-subgroup $S$ with $G_0 = L_1 * L_2$, where $L_1$ and $L_2$ are
central quotients of $SL_2^\epsilon(q)$ and $SL_6^\epsilon(q)$
respectively (both of whose centers have order prime to 5). Since
$A$ is not cyclic, it does not centralize $L_2$; hence it follows
from Lemma~\ref{basic-facts} that $A \cap L_2 \ne 1$. Since $S
\cap L_2$ is non-abelian, by induction $S \cap L_2 \le A$. In
particular, $A$ contains a homocyclic abelian subgroup of rank 5
and exponent $\ord{q-\epsilon}_5$, and $S/A$ is cyclic. Now $G$
also contains a subgroup $G_1 = K_1 * K_2 * K_3$ with each $K_i
\iso SL_3^\epsilon(q)$, where we may assume $S \cap G_1 \in
Syl_5(G_1)$. Each $K_i$ contains a homocyclic abelian subgroup
$B_i$ of rank 2 and exponent $\ord{q-\epsilon}_5$ with
$N_{K_i}(B_i)$ acting irreducibly on $\Omega_1(B_i)$. Because
$S/A$ is cyclic it follows that $B_1 \times B_2 \times B_3 = S_T
\le A$; and since $A$ is non-abelian, $A = S$. This completes the
elimination of all $p=5$ cases.

We next consider the various $p=3$ cases, leaving the nettlesome
groups of type $G_2(q)$ and ${}^3 D_4(q)$ until the very end.

\medskip
\noindent {\bf Case $\pmb{p = 3}$ and $\pmb{m_0 = 1}$}: Here $3
\divides q-1$. If $G \iso F_4(q)$ then it contains the universal
group $G_0 = B_4(q)^u$. By inspection of the order formulas, $G_0$
may be chosen to contain a subgroup $S_0$ of index 3 in $S$ which,
by Proposition~\ref{sylow-structure}, is non-abelian. Since $\ord
A > 3$ we have $S_0 \cap A \ne 1$ so, as usual, the minimality of
$G$ forces $S_0 \le A$. Thus $S_0 = A$ has index 3 in $S$.
Furthermore, since a Sylow 3-subgroup of the Weyl group of $B_4$
has order 3, we get that $A$ has an abelian subgroup of index 3.
But now by \cite[Table~4.7.3A]{GLS3} there is an element $t$ of
order 3 in $G$ such that $C = O^{3'}(C_G(t)) = L_1 * L_2$ where
$L_i \iso SL_3(q)$ for $i=1,2$. Choose a suitable representative
of this class so that $C_S(t) \in Syl_3(C)$. Then $A \cap L_i \nle
Z(L_i)$, so because each Sylow subgroup $S \cap L_i$ is
non-abelian, by induction $S \cap L_i \le A$ for $i=1,2$. This
gives a contradiction because $S \cap L_1 L_2$ clearly does not
have an abelian subgroup of index 3.

Since ${}^2 E_6(q)$ shares a Sylow 3-subgroup with a subgroup
of type $F_4(q)$ this family is eliminated by minimality of $G$.

Consider when $G$ is one of $E_6(q)$, $E_7(q)$ or $E_8(q)$.
In these cases $S_T$ is homocyclic of the same rank as $G$ and
$S_T$ lies in a maximal split torus $T$ of $G$ with $W = N_G(T)/C_G(T)$
isomorphic to the Weyl group of $G$.  Note that $W$ acts on the
Sylow 3-subgroup $S_T$ of $T$; moreover, in each case $W$
acts irreducibly on $\Omega_1(S_T)$, and $Z(S) \le S_T$.
%
%  (The Weyl groups are, respectively,
%  $U_4(2) \cdot 2$,
%  $Z_2 \times S_6(2)$, and
%  $2 \cdot O_8^+(2) \cdot 2$, and each has a non-abelian Sylow 3-subgroup,
%  hence no nontrivial strongly closed 3-subgroup that is not Sylow.)
%
By strong closure of $A$ we obtain
\begin{equation}
\Omega_1(S_T) \le A. \label{exceptional-1}
\end{equation}
There are containments: $F_4(q) \le E_6(q) \le E_7(q) \le E_8(q)$,
with corresponding containments of their maximal split tori. Thus
by (\ref{exceptional-1}), in each exceptional family $A$
nontrivially intersects a subgroup, $G_0$, of $G$ of smaller rank
in this chain. Since the Sylow 3-subgroups of each $G_0$ are
non-abelian, by minimality of $G$ and the preceding results we
get that $A$ contains a Sylow 3-subgroup of the respective
subgroup $G_0$. Since then $A$ is non-abelian,
it is not contained in $S_T$. Now the Weyl group
of $G$ is of type $U_4(2) \cdot 2$, $Z_2 \times S_6(2)$,
or $2 \cdot O_8^+(2) \cdot 2$, so by induction applied in $N_G(T)$
it follows that $A$ covers a Sylow 3-subgroup of $W$. Finally, the
irreducible action of $W$ on $S_T/\Phi(S_T)$  forces $S_T \le A$,
and so $A = S$, a contradiction.

\medskip
\noindent
{\bf Case $\pmb{p = 3}$ and $\pmb{m_0 = 2}$}: Here $3 \divides q+1$.
The argument employed when $3 \divides q-1$ mutatis mutandis
eliminates $F_4(q)$ as a possibility (using $L_i \iso SU_3(q)$ in
this case).
The groups ${}^2 F_4(2^n)'$ --- including the Tits simple group
--- share a Sylow 3-subgroup with their subgroups $SU_3(2^n)$, and
so are eliminated by induction.
Also, $E_6(q)$ shares a Sylow 3-subgroup with its subgroup $F_4(q)$,
hence it is eliminated.
To eliminate $E_8(q)$, $E_7(q)$ and ${}^2 E_6(q)$ we refer to the
table of centralizers of elements of order 3 in these groups:
\cite[Table 4.7.3A]{GLS3}.

First assume $G \iso E_8(q)$.
By \cite[Table 4.7.3B]{GLS3}, $G$ contains a subgroup
$X \iso L_1 \times L_2$, where the two components are conjugate
and of type $U_5(q)$.
We may assume $S \cap X \in Syl_3(X)$.
Since $\Omega_1(S_T)$ is the unique elementary abelian subgroup
of $S$ of rank 8, $\Omega_1(S_T) \le X$;
in particular, $A \cap X \ne 1$.
As usual, by minimality of $G$ we obtain $S \cap X \le A$,
and the ``toral subgroup'' for $S \cap X$ lies in $S_T$.
Order considerations then give
$S_T \le A$ and $\ind{S}{A} \le 3^3$.
Now the centralizer of an element of order 3 in $Z(S)$
is of type $({}^2 E_6(q) * SU_3(q))3$, where the two
factors share a common center of order 3.
Since $S_T \le A$ it follows that $A$ acts
nontrivially on, hence contains a Sylow 3-subgroup
of, each component (or of $SU_3(2)$ when $q = 2$).
This implies $A$ covers $S/S_T \iso S_W$, as needed to
give the contradiction $A = S$.

Let $G \iso E_7(q)$.  Then $G$ contains a subgroup
$X \iso SU_8(q)$ with $S \cap X \in Syl_3(X)$.
Since $S \cap X$ has the same ``toral subgroup''
as $S$, as usual we obtain $S \cap X \le A$,
$S_T \le A$ and $\ind{S}{A} \le 3^2$.
Now $S$ also contains an element of order 3
whose centralizer has a component of type ${}^2 E_6(q)$
(universal version).
Since as usual $A$ contains a Sylow 3-subgroup of this component
it follows that $A$ covers $S/S_T$ and so $A = S$, a contradiction.

Finally, assume $G \iso {}^2 E_6(q)$. Since by \cite{Atlas} ${}^2
E_6(2)$ shares a Sylow 3-subgroup with a subgroup of type
$Fi_{22}$, by minimality of $G$ we may assume $q > 2$. Let $X$ be
the centralizer of an element of order 3 in $Z(S)$, so $X \iso
(L_1 * L_2 * L_3)(3 \times 3)$, where each $L_i \iso SU_3(q)$, the
central product $L_1 L_2 L_3$ has a center of order 3, an element
of $S$ cycles the three components, and another element of $S$
induces outer diagonal automorphisms on each $L_i$.  As usual, it
follows easily that $A$ contains a Sylow 3-subgroup of $S \cap X$.
By order considerations
$$
\ind{S_T}{S_T \cap A} \le 3
\qquad \text{and} \qquad
\ind{S}{A} \le 9.
$$
Now there is an element $t$ of order 3 in $S$ such that
$$
C = C_G(t) = D * T_1 , \qquad \text{where } D \iso D^-_5(q)
\quad\text{and}\quad T_1 \iso Z_{q+1},
$$
and we may choose $t$ so that $S_0 = C_S(t) \in Syl_3 (C)$.
Let $S_1 = S \cap D$ and $S_2 = S \cap T_1$, and
note that $\gp t = \Omega_1(T_1)$.
Since the Schur multiplier of $D$ has order prime to 3,
$S_0 = S_1 \times S_2$.
It follows as usual that $S_1 \le A$.

Now let $w \in S - S_0$ and let $t_1 = t^w$. Then $t_1 \ne t$ and
$S_0 \in Syl_3(C_G(t_1))$. By symmetry, the strongly closed
subgroup $A$ contains the Sylow 3-subgroup $S_1^w$ of the
component $D^w$ of $C_G(t_1)$. Since $t_1$ acts faithfully on $D$,
so too $S_2^w$ acts faithfully on $D$, from which it follows that
$$
S_2 \le S_1 S_1^w \le A .
%\label{exceptional-2}
$$
Moreover, $A$ contains the ``toral subgroup'' of
$C$ of type $(q+1)^6$ (in the universal version of $G$),
so $S_T \le A$ and hence $A$ is the subgroup of
$S$ that normalizes each component $L_i$ of $X$.
Since $S_W$ is generated by elements of order 3
(in the universal version of $G$),
$S = A\gp x$ for some element $x$ of order 3.
Since no conjugate of $x$ lies in $A$ we may further
assume $C_S(x) \in Syl_3(C_G(x))$.
Since $\gp x$ cycles $L_1,L_2,L_2$ it follows that
the 3-rank of $C_G(x)$ is at most 5:
this restricts the possibilities for the type
of $x$ in \cite[Table 4.7.3A]{GLS3}.
In all possible cases $C_G(x)$ contains a product, $L$,
of one or two components with $C(L)$ cyclic.
The same argument that showed $S_2 \le A$ may now be applied
to show $x \in A$, a contradiction.
This completes the proof for these families.

\medskip
\noindent {\bf Case $\pmb{G_2(q)}$ and $\pmb{ {}^3 D_4(q)}$ where
$\pmb{q \equiv \epsilon \pmod 3}$}: If $G \iso G_2(q)$ then by
Proposition~\ref{normalizer-of-others-in-theorem3} $Z(S) \iso Z_3$
is the unique candidate for $A$, contrary to
Lemma~\ref{A-is-noncyclic}. Thus the minimal counterexample is not
of type $G_2(q)$.

Assume $G \iso {}^3 D_4(q)$.  Then $G$ contains a subgroup $G_0$
isomorphic to $G_2(q)$ (the fixed points of a graph automorphism
of order 3), and by order considerations we may assume
$S_0 = S \cap G_0$ is Sylow in $G_0$ and so has index 3 in $S$.
As noted above, $\gp z = Z(S_0)$ is of order 3
and is the unique nontrivial strongly closed (in $G_0$)
proper subgroup of $S_0$.
Consider first when $\ord{A \cap S_0} > 3$. Then since $S_0$ is
non-abelian, induction applied to $G_0$
gives $S_0 \le A$, and so $A = S_0$. Since by
Proposition~\ref{proposition3-4},
$z^{G_0} \cap S_0 = \{ z^{\pm 1} \}$,
whereas $\gp z$ is not strongly closed in $G$, there must be
$G$-conjugates of  $z$ in $S - S_0$, contrary to $A$ being
strongly closed (one can see this fusion in a subgroup of
${}^3 D_4(q)$ of type $PGL^\epsilon_3(q)$).

Thus $A \cap S_0 =\gp z$ and so by Lemma~\ref{A-is-noncyclic},
$A = \gp z \times \gp y$ with $z \sim y$ in $G$.
Since $[S,y] \le \gp z$, $y$ centralizes $\Phi (S)$.
Since ${}^3 D_4(q)$ has 3-rank 2
and $y \notin \Phi(S)$, by Proposition~\ref{sylow-structure}(4) we
must have $\ord S = 3^4$.  But then $S_0$ is the non-abelian group
of order 27 and exponent 3, and $y$ centralizes a subgroup of
index 3 in it, contrary to the 3-rank of ${}^3 D_4(q)$ being 2.
This eliminates the possibility that $G \iso {}^3 D_4(q)$ and so
completes the consideration of all cases.
\end{proof}

%\bigbreak

%%%%%%%%%%%%%%%%%%%%%%%%%%%%%%%%%%%%%%%%%%%%%%%%%%%%%%%%%%%%%%%%%%%%%%%%%%
\begin{lemma}
\label{G-not-char-p}
%%%%%%%%%%%%%%%%%%%%%%%%%%%%%%%%%%%%%%%%%%%%%%%%%%%%%%%%%%%%%%%%%%%%%%%%%%
$G$ is not a group of Lie type (untwisted or twisted) in
characteristic $p$.
\end{lemma}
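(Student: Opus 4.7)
Assume for contradiction that $G$ is a simple group of Lie type in characteristic $p$. Then the Sylow $p$-subgroup $S$ equals $U$, the unipotent radical of a Borel subgroup $B = UT$, and the Borel--Tits theorem gives $N_G(S) = B$. For any $t \in T$, the subgroup $A^t$ lies in $U$ and consists of $G$-conjugates of elements of $A \le U$; strong closure forces $A^t \le A$, so $T$ normalizes $A$ and $A \nor B$.

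I would first handle the $BN$-rank 1 case. For $p$ odd, such $G$ is isomorphic to $PSL_2(p^n)$, $PSU_3(p^n)$, or (when $p = 3$ and $n \ge 3$ is odd) $Re(3^n)$. In $PSL_2(p^n)$, $S$ is elementary abelian and $T$ acts transitively on $S \setminus \{1\}$, so the only $T$-invariant subgroups are $1$ and $S$, contradicting that $A$ is proper and nontrivial. In $PSU_3(p^n)$ and $Re(3^n)$, the proper nontrivial strongly closed subgroups of $S$ are described in Proposition~\ref{normalizer-of-others-in-theorem3}(1),(2); choosing $A$ minimal among these gives $\gp{A^G} = G$ with $G$ itself appearing in conclusion (ii) of Theorem~\ref{theorem3-1}, contradicting that $G$ is a counterexample.

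Finally assume $G$ has $BN$-rank at least 2, and let $\alpha_0$ denote the highest root (with the analogous Galois-stable highest orbit in the twisted setting), so that $U_{\alpha_0} \le Z(S)$. Since $A$ is a nontrivial normal subgroup of the $p$-group $S$, $A \cap Z(S) \ne 1$; the torus $T$ acts on $U_{\alpha_0} \iso (\FF_q, +)$ via a character whose image is $\FF_q^*$ for $q > 2$, so the $B$-invariance of $A$ yields $U_{\alpha_0} \le A$. For each $w \in N_G(T)/T$ with $w(\alpha_0)$ a positive root, a lift of $w$ conjugates $U_{\alpha_0}$ to $U_{w(\alpha_0)} \le S$, so by strong closure every positive long root subgroup of $S$ lies in $A$. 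Since $A \nor S$, $[A, U_\beta] \le A$ for every positive root $\beta$; the Chevalley commutator formula, iterated along the Dynkin diagram, pushes every positive root subgroup into $A$, yielding $A = S$, the desired contradiction. The main obstacle is executing this rank $\ge 2$ step uniformly over the twisted families ($PSU_n(q)$, $P\Omega^-_{2n}(q)$, ${}^3 D_4(q)$, ${}^2 E_6(q)$), where the ``root subgroups'' correspond to Galois orbits and can have order larger than $q$, and over the small fields where $T$ fails to be transitive on $U_{\alpha_0} \setminus \{1\}$; in those exceptional cases I would replace the torus argument by $G$-fusion drawn from a suitable rank-1 Levi subgroup (to which the preceding analysis already applies).
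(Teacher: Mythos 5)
Your approach (root subgroups, the torus action on $U_{\alpha_0}$, Weyl-group conjugation, the commutator formula) is genuinely different from the paper's, which instead works in an end-node maximal parabolic $P_1 = QLH$, uses the inductive hypothesis on $O^{p'}(P_1)$ to force $A \le Q = O_p(P_1)$, and then derives a contradiction from explicit fusion facts (elements of $Z(S)$ having conjugates in $S - Q$, taken from Curtis--Kantor--Seitz, or noncommuting transvections in the abelian-radical cases), with separate treatment of $U_5(q)$ and of $L_3(3)$, $G_2(3)$, $S_4(3)$. Your rank-1 reduction and the simply-laced untwisted case are fine: there all roots are $W$-conjugate, so $U_{\alpha_0} \le A$ plus strong closure already gives every positive root subgroup inside $A$, hence $A = S$. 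But there are two genuine gaps.

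First, the step ``the Chevalley commutator formula, iterated along the Dynkin diagram, pushes every positive root subgroup into $A$'' does not work. Normality gives $[A, U_\beta] \le A$, which constrains commutators, not $U_\beta$ itself; and every root occurring in $[U_\gamma, U_\beta]$ has height strictly greater than $\mathrm{ht}(\gamma)$, so commutation can never produce a root subgroup of height $1$. Concretely, in $Sp_4(q)$ with simple roots $\alpha$ (short) and $\beta$ (long), your argument yields $U_\beta, U_{2\alpha+\beta} \le A$ and then $U_{\alpha+\beta} \le A$ from $[U_\beta,U_\alpha]$, but it can never reach $U_\alpha$, leaving $A$ trapped in the index-$q$ subgroup $U_\beta U_{\alpha+\beta} U_{2\alpha+\beta}$. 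What actually finishes the non-simply-laced untwisted case is fusion, not commutation: the short positive roots form a single $W$-orbit, so once one short-root element lies in $A$ (which the commutator above does provide), strong closure under $W$-conjugation gives all short positive root subgroups. You invoke $W$-conjugacy only for long roots, so as written the argument is incomplete.

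Second, deferring the twisted families and small fields to ``a suitable rank-1 Levi subgroup (to which the preceding analysis already applies)'' does not close the argument, because for the relevant rank-1 Levis the preceding analysis concluded that they are \emph{not} counterexamples precisely because they \emph{do} possess proper nontrivial strongly closed subgroups ($Z(S)$ in $U_3(q)$, and $Z(S)$, $\Omega_1(S)$ in $Re(3^n)$). So knowing $A \cap L \ne 1$ for such a Levi $L$ does not force $A \cap L$ to be Sylow in $L$, and no contradiction follows without a genuine global fusion input. This is exactly why the paper must import the Curtis--Kantor--Seitz fusion statement ($z^g \in S - Q$ for $z \in Z(S)$) for the special-radical cases and still needs a bespoke argument for $U_5(q)$, whose Levi is $U_3(q)$. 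The unitary and ${}^2E_6$ root subgroups being non-abelian of order $q^3$ (root system $BC_n$ in ${}^2A_{2n}$) compounds the difficulty; these are the hard cases of the lemma and your proposal leaves them open.
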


\begin{proof}
Assume $G$ is of Lie type (untwisted or twisted) over $\FF_q$
where $q = p^n$. Since $G$ is a counterexample, it follows from
Proposition~\ref{irreducible-action} that $G$ has $BN$-rank $\ge
2$. An end-node maximal parabolic subgroup $P_1$ for each of the
Chevalley groups (untwisted or twisted) containing the Borel
subgroup $S$ is described in detail in \cite{CKS} and
\cite{GLS-BN} (for the classical groups these parabolics are the
stabilizers in $G$ of a totally isotropic one-dimensional subspace
of the natural module.)  For the groups of $BN$-rank 2 the other
maximal parabolic, $P_2$, is also described in \cite{GLS-BN}. In
each group $P_i = Q_i L_i H$, where $Q_i = O_p (P_i)$, $L_i$ is
the component of a Levi factor of $P_i$ and $H$ is a $p'$-order
Cartan subgroup.

Except for the 5-dimensional unitary groups and some groups over
$\FF_3$ (which will be dealt with separately), for some $i \in
\{1,2\}$ the group $M = O^{p'}(P_i)$ satisfies the following
conditions:

\medskip
%  \vbox{
\begin{paragraph}
{\bf Properties~\ref{charp-hyp}A}

\begin{enumerate}

\item[(1)]
$S \le M$,

\item[(2)]
$F^*(M) = O_p(M)$,

\item[(3)]
$\overline M = M/O_p(M)$ is a quasisimple group of Lie type in
characteristic $p$,

\item[(4)]
$\overline M$ is not isomorphic to $U_3(p^n)$ or $Re(3^n)$ (when
$p=3$), for any $n \ge 2$,

\item[(5)]
$[O_p(M),\overline M] = O_p(M)$, and

\item[(6)]
if $Q = O_p(M)$ and $Z = \Omega_1(Z(S))$, then one of the
following holds:

\begin{description}

\item[(i)]
$Q$ is elementary abelian of order $q^k$ for some $k$, or

\item[(ii)]
$Q$ is special of type $q^{1{+}k}$ for some $k$, all subgroups of
order $p$ in $Z$ are conjugate in $G$, and $z^g \in S - Q$ for
some $z \in Z$, $g \in G$.

\end{description}
\end{enumerate}
\label{charp-hyp}
\end{paragraph}
%  }

\medskip

Basic information about this parabolic is listed in Table 3B.  The
last column of Table~3B indicates which of the two alternatives in
Properties~\ref{charp-hyp}A(6) holds. The proofs that the fusion in
Properties~\ref{charp-hyp}A(6ii) holds in each case may be found in
\cite{CKS}.

%  \begin{figure}[!t]
$$
\vbox{ \tabskip=20pt \halign{#\hfil &\hfil # \hfil &\hfil # \hfil
&\hfil # \hfil &\hfil # \hfil \cr
\multispan5 \hfil \bf Table 3B \hfil \cr
\noalign{\bigskip}
Group  & Parabolic & $Q$ & $L/Z(L)$ & \ref{charp-hyp}A(6) \cr
\noalign{\smallskip}
\noalign{\hrule}
\noalign{\medskip}
$L_k (q)$, $k \ge 3$     & $P_1$  & $q^{k{-}1}$
& $L_{k{-}1} (q)$   & (i) \cr
$O_{k}^\pm (q)$, $k \ge 7$ & $P_1$  & $q^{k{-}2}$
    & $O_{k{-}2}^\pm (q)$ & (i)   \cr
$S_{2k} (q)$, $k \ge 2$  & $P_1$  & $q^{1+ 2(k{-}1)}$ &
$S_{2k{-}2}(q)$   & (ii) \cr
$U_k(q)$, $k \ge 4$, $k \ne 5$   & $P_1$  & $q^{1+2(k{-}2)}$
    & $U_{k{-}2}(q)$    & (ii) \cr
$E_6(q)$        & $P_1$     & $q^{1+20}$    & $L_6(q)$          &
(ii) \cr
$E_7(q)$        & $P_1$     & $q^{1+32}$    & $O_{12}^+ (q)$    &
(ii) \cr
$E_8(q)$        & $P_1$     & $q^{1+56}$    & $E_7(q)$          &
(ii) \cr
${}^2 E_6(q)$   & $P_1$     & $q^{1+20}$    & $U_6 (q)$         &
(ii) \cr
$G_2(q)$, $q > 3$       & $P_2$     & $q^{1+4}$     & $L_2(q)$
& (ii) \cr
$F_4(q)$        & $P_1$     & $q^{1+14}$    & $S_6 (q)$         &
(ii) \cr
${}^3 D_4(q)$   & $P_2$     & $q^{1+8}$     & $L_2(q^3)$        &
(ii) \cr
\noalign{\medskip}
$U_5(q)$        & $P_1$     & $q^{1+6}$       & $U_3(q)$
& (ii)  \cr
}}
$$
%  \end{figure}

\medskip

Putting aside the last row for the moment, let $M = O^{p'}(P_i)$
be chosen according to Table 3B. Since $M$ does not have any
composition factors isomorphic to $U_3(p^n)$ or $Re(3^n)$, the
minimality of $G$ gives inductively that $A \in Syl_p(\gp{A^M})$.
If $A \nle Q$, then by the structure of $M$ in
Properties~\ref{charp-hyp}A(3) and (5), $M \le \gp{A^M}$.  But then
$A = S$ by (1), a contradiction. Thus
\begin{equation}
A \le Q \quad \text{and} \quad  A \nor M . \label{charp-1}
\end{equation}

Assume first that Properties~\ref{charp-hyp}A(6ii) holds. Then
since $A \nor S$, $Z \cap A \ne 1$.  The strong closure of $A$
together with (6ii) forces $Z \le A$, contrary to the existence of
some $z^g \in S - Q$.  This contradiction shows that $G$ can only
be among the families in the first two rows or the last row of
Table~3B.

Assume now that $Q$ is abelian, i.e., $G$ is a linear or
orthogonal group. In these cases $Q$ is elementary abelian and is
the natural module for $\overline M$; in particular, $\overline M$
acts irreducibly on $Q$.  By (\ref{charp-1}) we obtain $A = Q$.
However, in these cases when $G$ is viewed as acting on its
natural module, $Q$ is a subgroup of $G$ that stabilizes the
one-dimensional subspace generated by an isotropic vector and acts
trivially on the quotient space. Since the dimension of the space
is at least 3, one easily exhibits noncommuting transvections that
stabilize a common maximal flag; hence there are conjugates of
elements of $Q$ in $S$ that lie outside of $Q$, a contradiction.

In $U_5(q)$ for $q \ge 3$ the unipotent radical of the parabolic
$P_1$ is special of type $q^{1+6}$ with $Z = Z(S) = Z(Q_1)$ and
all subgroups of order $p$ in $Z$ conjugate in $P_1$ (so $Z \le
A$).  As in the other unitary groups, there exist $z \in Z$ and $g
\in G$ such that $z^g \in S - Q_1$. Now $L_1 \iso U_3(q)$ acts
irreducibly on $Q_1 /Z$ and, by the strong closure of $A$, $A \cap
Q$ is normal in $P_1$. Since $z^g \in A$ and $[Q_1,z^g] \le A \cap
Q_1$, the irreducible action of $L_1$ forces $Q_1 \le A$.  But now
there is a root group $U$ of type $U_3(q)$ with $U$ contained in
$Q_1$ such that $S = Q_1 U^x$, for some $x \in G$.  Since $U \le
A$, this forces $A = S$, a contradiction.

It remains to treat the special cases when the Levi factors in
Table~3B are not quasisimple: $G \iso L_2(q)$, $L_3(3)$, $G_2(3)$,
$S_4(3)$, or $U_4(q)$ (in line~3 of Table~3B, $S_2(q) = L_2(q)$).
Properties of small order groups may be found in \cite{Atlas}. The
groups $L_2(q)$ have elementary abelian Sylow $p$-subgroups so $G$
is not a counterexample in this instance. In $L_3(3)$ we
have $S \iso 3^{1+2}$ and the action of the two maximal parabolic
subgroups (stabilizers of one- and two-dimensional subspaces)
easily show that the strong closure $Z(S)$ in $S$ is all of $S$,
contrary to $A \ne S$.

If $G \iso G_2(3)$ then since $G$ has two (isomorphic) maximal
parabolics containing $S$, $A$ is not normal in one of them, say
$P_1$. By \cite{Atlas}, $P_1 = (W \times U): L$ where $W \iso
3^{1+2}$, $U \iso Z_3 \times Z_3$, $O_3(P_1) = WU$, and $L \iso
GL_2(3)$ acts naturally on both $U$ and $W/W'$. Since $A$ projects
onto a subgroup of order 3 in $P_1/O_3(P_1) \iso L$, we see that
$[A,W] \nle W'$ and $[A,U] \ne 1$.  Both these commutators lie in
the strongly closed subgroup $A$, so the action of $L$ forces
$O_3(P) \le A$.  Thus $A = S$, a contradiction.

If $G \iso S_4(3)$ there are maximal parabolics of type $P_1 =
3^{1+2} : SL_2(3)$ and $P_2 = 3^3 (S_4 \times Z_2)$. Since $P_1 =
N_G(Z(S))$ it follows that the $S_4$ Levi factor in $P_2$ acts
irreducibly on $O_3(P_2)$. Now $A \cap O_3(P_2) \ne 1$ so
$O_3(P_2) \le A$. Likewise since $A$ is a noncyclic strongly
closed subgroup, it follows easily from the action of the Levi
factor in $P_1$ that $O_3(P_1) \le A$.  These together give $A =
S$, a contradiction.

Finally, assume $G \iso U_4(q)$.  From the isomorphism $U_4(q)
\iso O_6^+(q)$ we see that $G$ contains a maximal parabolic $P_2 =
q^4 O_4^+(q) \iso q^4 L_2(q^2)$, where the Levi factor is
irreducible on the (elementary abelian) unipotent radical.  This
case has been eliminated by previous considerations. This final
contradiction completes the proof of the lemma.
\end{proof}

%%%%%%%%%%%%%%%%%%%%%%%%%%%%%%%%%%%%%%%%%%%%%%%%%%%%%%%%%%%%%%%%%%%%%%%%%%
\begin{lemma}
\label{G-not-sporadic}
%%%%%%%%%%%%%%%%%%%%%%%%%%%%%%%%%%%%%%%%%%%%%%%%%%%%%%%%%%%%%%%%%%%%%%%%%%
$G$ is not one of the sporadic simple groups.
\end{lemma}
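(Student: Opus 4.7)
The plan is to argue case by case through the sporadic simple groups. By Lemma \ref{G-is-simple} the minimal counterexample $G$ is simple, by Lemma \ref{A-is-noncyclic} $A$ is non-cyclic with $S$ non-abelian, and by Proposition \ref{proposition3-4} we may further assume $|A| \ge p^2$. Only finitely many pairs $(G,p)$ with $G$ sporadic and $p$ an odd prime satisfy these conditions, and they can be enumerated from the Atlas \cite{Atlas} and from \cite[Chapter~5]{GLS3}.

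For each such pair, I would first list the normal subgroups of $S$ (any strongly closed subgroup must be normal in $S$) together with the $G$-fusion pattern in $S$, noting in particular that $A \cap Z(S) \ne 1$ so that the fusion of $p$-central elements provides the primary restriction on $A$. The typical strategy is then to select a $p$-local subgroup $H$ of $G$ containing $S$ (for example, the normalizer of a well-chosen characteristic subgroup of $S$, taken from the list of maximal $p$-local subgroups in \cite{Atlas}) and to let $A_0 \le A \cap H$ be a minimal strongly closed $p$-subgroup of $H$. The inductive hypothesis applied to $H$ then forces $A_0$ either to be a Sylow $p$-subgroup of $\langle A_0^H \rangle$ or to fit into a semisimple configuration as in Theorem \ref{theorem3-1}. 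For sporadic $G$, the orders of the composition factors of $p$-local subgroups severely restrict the possibilities for such components, and since Lemmas \ref{G-not-alternating}--\ref{G-not-char-p} have already ruled out the non-sporadic simple group types as counterexamples, this typically forces $A$ to contain a Sylow $p$-subgroup of a Lie-type or alternating component of $H$; combining with $G$-fusion of $Z(S)$ then drives $A$ up to $S$, contradicting $A \lneq S$.

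For the sporadic groups already appearing in conclusions (iv) and (v) of Theorem \ref{theorem3-1} --- namely $J_2$, $HS$, $Mc$, $Co_2$, $Co_3$, $J_4$ at the indicated primes, and $J_3$ at $p=3$ --- the nontrivial proper strongly closed $p$-subgroups are enumerated explicitly in Proposition \ref{normalizer-of-others-in-theorem3}, and in each case they are elementary abelian of the rank asserted in Theorem \ref{theorem3-2}. Hence in those groups the minimal strongly closed $A$ already satisfies the conclusion of Theorem \ref{theorem3-2} and does not provide a counterexample; together with the preceding analysis for the remaining sporadics, this completes the argument.

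The main obstacle will be dealing with the largest sporadic groups (the Monster, Baby Monster, and the three Fischer groups) at small odd primes, where the $p$-local structure is most elaborate and many candidate $p$-local subgroups must be considered. Each of these cases should nevertheless be tractable by selecting a $p$-local subgroup $H$ containing a Lie-type or alternating composition factor to which Lemmas \ref{G-not-alternating}--\ref{G-not-char-p} apply; the careful identification of such $H$, together with the fusion bookkeeping within it, will be the technical heart of the argument.
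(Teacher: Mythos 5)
Your plan has the right general shape (reduce to finitely many pairs $(G,p)$, exploit $p$-local subgroups containing $S$, apply the minimality of $G$ to them), but it is missing the one idea that actually does most of the work in the paper's proof, and without it the plan stalls. For the majority of the sporadic cases with $\ord S \ge p^4$ the relevant $p$-local subgroup is $N = N_G(Z)$ with $Z = Z(S) \iso Z_p$ and $Q = O_p(N)$ extraspecial of exponent $p$ and width $w>1$, with $N$ acting irreducibly on $Q/Q'$ (e.g.\ $5^{1+4}(((Q_8*D_8)A_5)\cdot 4)$ in $B$, $3^{1+12}(2Suz)\cdot 2$ in $M$, etc.). Here $N/Q$ typically has \emph{no} Lie-type or alternating component in the sense you need, and the induction plus irreducibility only gets you to the dichotomy: either $A$ covers a Sylow subgroup of $N/Q$ (whence $Q \le A$ and $A=S$), or $A \le Q$ and then $A = Q$. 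Your plan does not address how to exclude the second alternative, i.e.\ the possibility that $A$ is exactly the extraspecial group $Q$, which is a proper subgroup of $S$ normal in $N$ and perfectly consistent with everything you have said so far. The paper kills this with a short but essential fusion computation: since $A$ is minimal strongly closed, $Z$ is not strongly closed, so some $x \in Q - Z$ is conjugate to $z \in Z$ via $g$ with $C_Q(x)^g \le S$; strong closure puts $C_Q(x)^g \le Q$, and because the width is $>1$ the subgroup $C_Q(x)$ is non-abelian with $C_Q(x)' = Z$, forcing $Z^g = Z$ and contradicting $z^{g^{-1}} \notin Z$. This commutator/width argument is the heart of the lemma and is absent from your proposal.

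Two smaller points. First, the cases $\ord S = p^3$ (where the candidate $A$ would have index $p$ in an extraspecial $S$) need their own short argument via the irreducible action of $N_G(S)$ on $S/Z(S)$ (with a twist for $M_{12}$); your blanket appeal to $p$-local subgroups does not obviously cover these. Second, your prediction that the hard cases are the Monster, Baby Monster and Fischer groups is inverted: those are precisely the cases settled uniformly by the extraspecial configuration above (or by Sylow containment in an already-eliminated Lie-type subgroup, as for $Fi_{22}$), while the genuinely ad hoc work in the paper goes into $Co_1$ at $p=5$ (where the extraspecial group has width $1$), $Fi_{23}$ (a $3$-transposition fusion argument), and $Th$ (a character-restriction argument showing a $G_2(3)$ subgroup meets every class of elements of order $3$ and that $S = \Omega_1(S)$). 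As it stands the proposal is a reasonable outline of the bookkeeping but omits the decisive mechanism, so it cannot be considered a correct proof.
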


\begin{proof}
The requisite properties of the sporadic groups for this proof are
nicely documented in \cite{Atlas}, \cite[Section 5]{GL}, or
\cite[Section 5.3]{GLS3}; many of their proofs may be found in
\cite{AS}. Facts from these sources are quoted without further
attribution. Verification that the sporadic groups in conclusions
(iv) and (v) of Theorem~\ref{theorem3-1} indeed have strongly
closed subgroups as asserted may also be found in these
references. We clearly only need to consider groups where $p^2$
divides the order; indeed, when the Sylow $p$-subgroup has order
exactly $p^2$ it is elementary abelian and $G$ is not a
counterexample in these cases.

If $\ord S = p^3$, then in all cases the Sylow $p$-subgroup is
non-abelian of exponent $p$ and, with the exception of $M_{12}$,
$N_G(S)$ acts irreducibly on $S/Z(S)$. In $M_{12}$ with $p = 3$:
$S$ contains distinct subgroups $U_1$ and $U_2$, each of order 9,
such that $N_G(U_i)$ acts irreducibly on $U_i$ for each $i$. Since
$A$ is noncyclic and strongly closed, in all cases these
conditions force $A = S$, a contradiction. Thus we are reduced to
considering when $\ord S \ge p^4$.

We first argue that the following general configuration cannot
occur in $G$:

\medskip
\begin{paragraph}
{\bf Properties~\ref{sporadic-hyp}B}

\begin{enumerate}

\item[(1)]
$Z(S) = Z \iso Z_p$,

\item[(2)]
$N = N_G(Z)$ has $Q = O_p(N)$ extraspecial of exponent $p$ and
width $w > 1$ (denoted $Q \iso p^{1{+}2w}$),

\item[(3)]
$N$ acts irreducibly on $Q/Q'$, and

\item[(4)]
$N/Q$ does not have a nontrivial strongly closed $p$-subgroup that
is proper in a Sylow $p$-subgroup of $N/Q$.

\end{enumerate}
\label{sporadic-hyp}
\end{paragraph}
\medskip

By way of contradiction assume these conditions are satisfied in
$G$. If $A \nle Q$ then by (4) we obtain that $A$ covers a Sylow
$p$-subgroup of $N/Q$. In this case, the irreducible action of $N$
on $Q/Q'$ then forces $Q \le A$ and so $A = S$, a contradiction.
Thus $A \le Q$. Now $Z \le A$ but $\ord A > p$ so the irreducible
action of $N$ forces $A = Q$. Since $A$ is minimal strongly
closed, whence $Z$ is not strongly closed, there is some $x \in Q
- Z$ such that $x \sim z$ for $z \in Z$. Thus by Sylow's Theorem
there is some $g \in G$ such that
$$
C_Q(x)^g \le S \qquad \text{and} \qquad x^g = z.
$$
By strong closure, $C_Q(x)^g \le Q$.  But since $Q$ has width $>
1$ we obtain $Z^g \le (C_Q(x)^g)' \le Q' = Z$ and so $g$
normalizes $Z$.  This contradicts the fact that $z^{g^{-1}} \notin
Z$ and so proves these properties cannot hold in $G$.

Most sporadic groups are eliminated because they satisfy
Properties~\ref{sporadic-hyp}B, or because they share a Sylow
$p$-subgroup with a group that is eliminated inductively. All
cases where $\ord S \ge p^4$ are listed in Table~3C along with the
isomorphism type of the corresponding normalizer of a $p$-central
subgroup (or another ``large'' subgroup, or reason for
elimination). Some additional arguments must be made in a few
cases.

\begin{figure}[!t]
$$
\vbox{ \tabskip=20pt \halign{#\hfil & # \hfil  \cr \multispan2
\hfil \bf Table 3C \hfil \cr \noalign{\medskip} \bf Group  &   \bf
$\bold {Z(S)}$ normalizer (or other reason) \cr
\noalign{\smallskip}
\noalign{\hrule}
\noalign{\medskip}
\multispan 2 \hfil $\pmb{p = 7}$ \hfil \cr
\noalign{\medskip}
$M$ & $7^{1{+}4} (3 \times 2 S_7)$   \cr
\noalign{\medskip}
\multispan 2 \hfil $\pmb{p = 5}$ \hfil                 \cr
\noalign{\medskip}
$Ly$     & $5^{1{+}4} ((4*SL_2(9)). 2)$      \cr
$Co_1$   & $5^{1{+}2} GL_2(5)$                 \cr
$HN$     & $5^{1{+}4}(2^{1{+}4}(5 \cdot 4))$         \cr
$B$      & $5^{1{+}4}(((Q_8*D_8)A_5)\cdot 4)$  \cr
$M$      & $5^{1{+}6}((4*2J_2)\cdot 2)$        \cr
\noalign{\medskip}
\multispan 2 \hfil $\pmb{p = 3}$ \hfil                 \cr
\noalign{\medskip}
$McL$         & $3^{1{+}4}(2S_5)$           \cr
$Suz$         & $3 U_4(3)2$                      \cr
$Ly$          & $3 McL 2$                        \cr
$O\text{'}N$  & (one class of $Z_3$ and $S = \Omega_1(S))$           \cr
$Co_1$        & $3^{1{+}4}GSp_4(3)$              \cr
$Co_2$        & $3^{1{+}4}((D_8*Q_8)\cdot S_5)$  \cr
$Co_3$        & $3^{1{+}4}((4*SL_2(9))\cdot 2)$  \cr
$Fi_{22}$     & ($S \le O_7(3)$)                 \cr
$Fi_{23}$     & ($S \le O_8^+(3):S_3$)           \cr
$Fi_{24}'$    & $3^{1{+}10}(U_5(2)\cdot 2)$      \cr
$HN$          & $3^{1{+}4} (4*SL_2(5))$          \cr
$Th$          & (see separate argument)          \cr
$B$           & $3^{1{+}8} (2^{1{+}6} O_6^-(2))$ \cr
$M$           & $3^{1{+}12} (2 Suz)\cdot 2$      \cr
}}
$$
\end{figure}

\medskip
When $p = 5$ and $G \iso Co_1$ the extraspecial $Q = O_5(N)$
listed in the table has width~1. As before, if $A \nle Q$ then the
irreducible action of $N$ on $Q/Q'$ forces $A = S$, a
contradiction. Thus $A \le Q$ and again the irreducible action
yields $A = Q$. However $G$ contains a subgroup $G_0 \iso Co_2$
whose Sylow 5-subgroup $S_0$ is isomorphic to $Q$ and has index 5
in $S$. Since $\ord{A \cap S_0} \ge 25$, the irreducible action of
$N_{G_0}(S_0)$ on $S_0/S_0'$ forces $S_0 \le A$, and hence $S_0 =
A$. But by Proposition~\ref{proposition3-4}, $Z(S_0)$ is strongly
closed in $G_0$ but not strongly closed in $G$.  Thus there is
some $g \in G$ such that $Z(S_0)^g \le S$ but $Z(S_0)^g \nle S_0$.
This contradicts the fact that $A = S_0$ is strongly closed in
$G$, and so $G \not\iso Co_1$.

When $p = 3$ and $G \iso Fi_{23}$ it contains a subgroup $H$ of
type $O_8^+(3):S_3$ that may therefore be chosen to contain $S$.
Let $H_0 = H'' \iso O_8^+(3)$.
By Lemma~\ref{basic-facts}, $A \cap H_0 \ne 1$;
and so by induction $A$ contains the non-abelian Sylow
3-subgroup $S_0 = S \cap H_0$ of $H_0$. Thus $\ind{S}{A} = 3$. Now
$H$ is generated by 3-transpositions in $G$, and so there are
3-transpositions $t,t_1$ such that
$$
D_1 = \gp{t,t_1} \iso S_3 \qquad \text{and}\qquad H = H_0 : D_1.
$$
Likewise $t$ inverts some element of order 3 in $H_0$, i.e., there
is some $t_2 \in H_0 \gp t$ such that $D_2 = \gp{t,t_2} \iso S_3$.
By the rank 3 action of $G$ on its 3-transpositions, $D_1$ and
$D_2$ are conjugate in $G$. Thus $D_1'$ is conjugate to the
subgroup $D_2'$ of $H_0$, contrary to $A$ being strongly closed.
This proves $G \not\iso Fi_{23}$.

Finally, assume $p = 3$ and $G \iso Th$. Following the Atlas
notation and the computations in \cite{W}, the centralizer of an
element of type $3A$ in $S$ has isomorphism type
$$
N = N_G(\gp{3A}) \iso (Z_3 \times H).2 \qquad \text{where} \qquad
H \iso G_2(3).
$$
Since an element of type $3B$ in $Z(S) \cap A$ commutes with $3A$
and therefore acts nontrivially on $H$, by induction $A$ contains
a Sylow 3-subgroup of $H$. In the Atlas notation for characters of
$G_2(3)$, the character $\chi$ of degree 248 of $Th$ restricts to
$Z_3 \times H$ as
$$
\chi|_{Z_3 \times H} = 1 \tensor (\chi_1 + \chi_6) + (\omega +
\overline\omega) \tensor \chi_5
$$
where the characters of the $Z_3$ factor are denoted by their values
on a generator.  By comparison of the values of these on the
$G_2(3)$-classes it follows that $H$ contains a representative of
every class of elements of order 3 in $Th$.
%% Likewise $H$ contains
%% a representative of the $Th$-class $9C$ of elements of order 9.
%% Now by \cite{W},
%% $$
%% M = N_G(\gp{3B}) \iso (3^2 \times 3^{1{+}2}).3^{1{+}2}:2S_4.
%% $$
%% The normal subgroup $T$ of type $3^2 \times 3^{1{+}2}$ is
%% generated by elements of order 3, hence may be chosen to lie in
%% $A$. This reference further shows that some element of type $9C$
%% lies outside $O_3(M)$. By the previous fusion, this element too
%% may be chosen in $A$. It now follows easily from the action of
%% $M/O_3(M)$ on $O_3(M)/T$ that $A$ contains a Sylow 3-subgroup of
%% $M$, i.e., $A = S$, a contradiction.
The calculations in \cite{W} show that $S = \Omega_1(S)$,
which leads to $A = S$, a contradiction.

This eliminates all sporadic simple groups as potential counterexamples,
and so completes the proof of Theorem~\ref{theorem3-2}.

\end{proof}

%%%%%%%%%%%%%%%%%%%%%%%%%%%%%%%%%%%%%%%%%%%%%%%%%%%%%%%%%%%%%%%%%%%%%%%%%%%%
\subsection{The Proof of Theorem~\ref{theorem3-1}}
\label{theorem3-1-proof}
%%%%%%%%%%%%%%%%%%%%%%%%%%%%%%%%%%%%%%%%%%%%%%%%%%%%%%%%%%%%%%%%%%%%%%%%%%%%
%
\    %creates a blank line
%

%\medskip

This subsection derives Theorem~\ref{theorem3-1} as a consequence
of Theorem~\ref{theorem3-2}.
Throughout this subsection $G$ is a minimal counterexample to
Theorem~\ref{theorem3-1}.

Since strong closure inherits to quotient groups,
if $\O_A(G) \ne 1$ we may apply induction to $G/\O_A(G)$
and see that the asserted conclusion holds.
Thus we may assume $\O_A(G) = 1$, and consequently
\begin{equation}
\gathered
\text{$A \cap N$ is not a Sylow $p$-subgroup of $N$ for any
nontrivial $N \nor G$} \\
\text{and $O_{p'}(G) = 1$.}
\endgathered
\label{theorem3.1-proof-1}
\end{equation}
Likewise if $G_0 = \gp{A^G}$ then by Frattini's Argument, $G = G_0
N_G(A)$, whence $\gp{A^G} = \gp{A^{G_0}}$.  Thus we may replace
$G$ by $G_0$ to obtain
\begin{equation}
\text{$G$ is generated by the conjugates of $A$.}
\label{theorem3.1-proof-2}
\end{equation}
By strong closure $A \cap O_p(G) \nor G$, whence
by (\ref{theorem3.1-proof-1}), $A \cap O_p(G) = 1$.
Since $[A,O_p(G)] \le A \cap O_p(G) = 1$, by
(\ref{theorem3.1-proof-2}) we have
\begin{equation}
O_p(G) \le Z(G). \label{theorem3.1-proof-3}
\end{equation}
Consequently $F^*(G)$ is a product of subnormal quasisimple
components $L_1,\dots,L_r$ with $O_{p'}(L_i) = 1$ for all $i$.
Moreover $S_i = S \cap L_i$ is a Sylow $p$-subgroup of $L_i$ and
$S_i \ne 1$ by (\ref{theorem3.1-proof-1}).

We argue that each component of $G$ is normal in $G$. By way of
contradiction assume $\{L_1, \dots, L_s\}$ is an orbit of size
$\ge 2$ for the action of $G$ on its components. Let $Z = A \cap
Z(S)$, so that $Z$ normalizes each $L_i$. Thus $N = \cap_{i=1}^s
N_G(L_i)$ is a proper normal subgroup of $G$ possessing a
nontrivial strongly closed $p$-subgroup, $B = A \cap N$ that is
not a Sylow subgroup of $N$. By induction --- keeping in mind that
components of $N$ are necessarily components of $G$ and $\O_B(N) =1$
--- and after possible renumbering, there are simple components
$L_1,\dots,L_t$ of $N$ that satisfy the conclusion of
Theorem~\ref{theorem3-1} with $B \cap L_i \ne 1$, these are all
the components of $N$ satisfying the latter condition, and $t \ge
1$. By Frattini's Argument $G = N_G(B) N$ from which it follows
that $L_1 \cdots L_t \nor G$. The transitive action of $G$ in turn
forces $t=s$. Thus $A$ permutes $\{L_1,\dots,L_s\}$ and $1 \ne A
\cap L_i < S_i$. If $A$ does not normalize one of these
components, say $L_i^a = L_j$ for some $i \ne j$ and $a \in A$,
then $S_i S_i^a = S_i \times S_j$. But then $[S_i,a] \nle (A \cap
L_i) \times (A \cap L_j)$, contrary to $A \nor S$. Thus $A$ must
normalize $L_i$ for $1 \le i \le s$. Since $A \le N \nor G$,
(\ref{theorem3.1-proof-2}) gives $N = G$, a contradiction.  This
proves
\begin{equation}
\text{every component of $G$ is normal in $G$.}
\label{theorem3.1-proof-4}
\end{equation}
The preceding results also show that $A$ acts nontrivially on each
$L_i$. By Lemma~\ref{basic-facts}, $A_i = A \cap L_i \ne 1$ and
$A_i$ is not Sylow in $L_i$ for every $i$. By
Theorem~\ref{theorem3-2} applied to each $L_i$ using a minimal
strongly closed subgroup of $A_i$ we obtain
\begin{equation}
%% \gathered
\text{$F^*(G) = L_1 \times L_2 \times \cdots \times L_r$}
%% \endgathered
\label{theorem3.1-proof-5}
\end{equation}
and each $L_i$ is one of the simple groups
described in the conclusion of Theorem~\ref{theorem3-1}.
Moreover, in each of conclusions (i) to (v), by
Propositions~\ref{irreducible-action}
and \ref{normalizer-of-others-in-theorem3},
$A_i$ is a subgroup of $L_i$ described in the respective conclusion.

It remains to verify that the action of $A$ is as claimed when $A \nle F^*(G)$.
The automorphism group of each $L_i$ is described in
detail in \cite[Theorem 2.5.12 and Section 5.3]{GLS3} --- these
results are used without further citation.

Let $S^* = S \cap F^*(G) = S_1 \times \cdots \times S_r$,
let $H^* = H_1 \times \cdots \times H_r$,
where $H_i$ is a $p'$-Hall complement to $S_i$ in $N_{L_i}(S_i)$,
and let $N^* = A S^*H^*$.
Note that $O_{p'}(N^*) = C_{H^*}(S^*)$ is $A$-invariant.
Now in all cases $[A,S_i] \le A \cap S_i \le \Phi(S_i)$, that is,
$A$ commutes with the action of $H^*$ on $S^*/\Phi(S^*)$.
This forces $A \le O_{p',p}(N^*)$.
By strong closure of $A$ we get that $AO_{p'}(N^*) \nor N^*$.
Thus $N_{N^*}(A)$ covers $H^*/C_{H^*}(S^*)$.
Let $H$ be a $p'$-Hall complement to $AS^*$ in $N_{N^*}(A)$;
we may assume $H^* = HC_{H^*}(S^*)$.
We have a Fitting decomposition
\begin{equation}
A = [A,H] A_F \quad \text{where} \quad A_F = C_A(H).
\label{theorem3.1-proof-6}
\end{equation}
By Propositions~\ref{irreducible-action}
and \ref{normalizer-of-others-in-theorem3} each $A_i$ is abelian
and $H_i$, hence also $H$, acts without fixed points on each $A_i$.
Since $[A,H] \le A \cap F^*(G)$ we therefore
obtain
\begin{equation}
[A,H] = A_1 \times \cdots \times A_r \qquad \text{and} \qquad A_F
\cap [A,H] = 1. \label{theorem3.1-proof-7}
\end{equation}
We now determine the action of $A_F$ on $L_i$ for each isomorphism
type in conclusions (i) to (v).

First suppose $A_F$ acts trivially on some $L_i$, say for $i=1$.
In this situation $A = A_1 \times B$ where
$B = (A_2 \times \cdots \times A_r)A_F = A \cap C_G(L_1)$.
Then $\gp{A^G} = L_1 \times \gp{B^G}$, and so we may proceed inductively
to identify $\gp{B^G}$ and conclude that Theorem~\ref{theorem3-1} is valid.
We now observe that $A_F$ acts trivially on all components listed
in conclusions (ii) to (v) as follows:
If $L_1$ is one of these cases, it follows from
Proposition~\ref{normalizer-of-others-in-theorem3}
that $C_{H_1}(S_1) = 1$ and so $A_F$ centralizes a
$p'$-Hall subgroup of $N_{L_1}(S_1)$.
In case (ii) of the conclusions, if $L_1$ is a Lie-type simple
group in characteristic $p$ and $BN$-rank 1,
by \cite[9-1]{GL} no automorphism of
order $p$ centralizes a Cartan subgroup of $L_1$,
so $A_F$ acts trivially on $L_1$.
If $L_1 \iso G_2(q)$ is described by case (iii) of the conclusion,
then since $[S_1,A_F] \le A_1$, the last assertion of
Proposition~\ref{normalizer-of-others-in-theorem3}(3)
shows that $A_F$ acts trivially on $L_1$.
And in cases (iv) and (v) of the conclusions,
when $L_1$ is a sporadic group,
none of the target groups admits an outer automorphism
of order $p$, and no inner automorphism that normalizes a Sylow
$p$-subgroup also commutes with a $p'$-Hall subgroup of its
normalizer.  Thus $A_F$ acts trivially in these instances too.

It remains to consider when every $L_i$ is described by conclusion (i):
$L = L_i$ is a group of Lie type over the field
$\FF_{q_i}$ where $p \notdivides q_i$ and the Sylow $p$-subgroups are
abelian but not elementary abelian.
Since $A_F$ commutes with the action
of a $p'$-Hall subgroup of $N_L(S_i)$,
it follows from Proposition~\ref{irreducible-action} that
$A_F$ induces outer automorphisms on $L$.
The outer diagonal automorphism group of $L$ has order
dividing the order of the Schur multiplier of $L$, so
by Proposition~\ref{sylow-structure}(7) no element of $G$
induces a nontrivial outer diagonal automorphism of
$p$-power order on $L$.
Since Sylow 3-subgroups of $D_4(q)$ and ${}^3 D_4(q)$ are
non-abelian, $L$ does not admit a nontrivial
graph or graph-field automorphism when $p=3$.
This shows $A_F$ must act as field automorphisms on $L$,
and hence $A_F/C_{A_F}(L)$ is cyclic.

Now $G$ is generated by the conjugates of $A$, hence
the group $\widetilde G = G/LC_G(L)$ of outer automorphisms on $L$
is generated by conjugates of $\widetilde A_F$.
This implies via \cite[Theorem 2.5.12]{GLS3} that
\begin{equation}
\label{diagonal-autos}
\widetilde G = \widetilde D \widetilde A_F
\qquad \text{and} \qquad
\widetilde D = [\widetilde D,\widetilde A_F]
\end{equation}
where $\widetilde D$ is a cyclic $p'$-subgroup of
the outer-diagonal automorphism group of $L$
normalized by the cyclic $p$-group $\widetilde A_F$ of
field automorphisms.
Moreover, since $p > 3$ when $L$ is of type $E_6(q)$,
${}^2 E_6(q)$ or $D_{2m}(q)$, the action of
$\widetilde A_F$ on $\widetilde D$ in (\ref{diagonal-autos})
implies that $\widetilde D$ is trivial except
in the cases where $L$ is a linear or unitary group.

A $p'$-order subgroup $D$ that covers the section
$\widetilde D$ for every $L_i$ may be defined as follows
(even in the presence of $L_i$ that are not of type (i)):
We have now established that $S = S^*A_F$,
and that $S^*$ is a Sylow $p$-subgroup of the (normal)
subgroup $G_D$ of $G$ inducing only inner and diagonal automorphisms
on $F^*(G)$.  Thus $N_{G_D}(S^*)$ has a $p'$-Hall complement, which
is then a complement to $S = S^*A_F$ in $N_G(S^*)$.
Since $[S^*,A_F] \le \Phi(S^*)$,
$A_F$ commutes with the action on $S^*$ of this
$p'$-Hall subgroup.
As $\widetilde D = [\widetilde D,A_F]$,
any choice for $D$ must lie in $C_G(S^*)$.
However, $C_G(S^*)$ has a normal $p$-complement, so
any $D$ must lie in $O_{p'}(C_G(S^*))$.
Thus $[O_{p'}(C_G(S^*)),A_F] = [O_{p'}(C_G(S^*)),S]$
covers $\widetilde D$ for every component $L_i$
(and centralizes all components that are not of type
$PSL$ or $PSU$).

Finally note that in every case $A_F'$ centralizes $L_i$ for every $i$.
Since then $A_F'$ centralizes $F^*(G)$, it must be trivial,
that is, $A_F$ is abelian. Since $A_F/C_{A_F}(L_i)$ is cyclic for
all $i$, it follows that $A_F = A_F/\cap_{i=1}^r C_{A_F}(L_i)$ has rank
at most $r$, as asserted. This completes the proof of
Theorem~\ref{theorem3-1}.
\hfill \framebox(7,7){}\break
% \end{proof}

%\bigskip

%%%%%%%%%%%%%%%%%%%%%%%%%%%%%%%%%%%%%%%%%%%%%%%%%%%%%%%%%%%%%%%%%%%%%%%%%%%%
\subsection{The Proofs of Corollaries~\ref{corollary3-3} and \ref{corollary3-4}}
\label{corollary3-3-proof}
%%%%%%%%%%%%%%%%%%%%%%%%%%%%%%%%%%%%%%%%%%%%%%%%%%%%%%%%%%%%%%%%%%%%%%%%%%%%
%
\    %creates a blank line
%

%\medskip

%% Before proving Corollary~\ref{corollary3-3} we establish some
%% basic facts about the normalizers of minimal strongly closed
%% subgroups in the simple groups appearing in the conclusion to
%% Theorem~\ref{theorem3-1}.

%% \noindent
%% {\it Proof of Corollary \ref{corollary3-3}}

%\begin{proof}
Considering both corollaries at once,
assume the hypotheses of Corollary~\ref{corollary3-3} hold. The
result is trivial if either $A = S$ (in which case $\O_A(G) = G$)
or $A = 1$ (in which case $G = 1$). By passing to $G/\O_A(G)$ we
may assume $\O_A(G) = 1$. Since $G$ is generated by conjugates of
$A$, Theorems~\ref{theorem3-1-p-equal-2} and \ref{theorem3-1}
imply that
\begin{equation}
G = (L_1 \times \cdots \times L_r)(D \cdot A_F),
\label{corollary3-1-proof-1}
\end{equation}
where the $L_i$, $D$ and $A_F$ are described in their conclusions
(with both $D$ and $A_F$ trivial when $p = 2$).
Let $S_i = S \cap L_i$ and $A_i = A \cap L_i$.

For each $i$ let $Z_i$ be a minimal nontrivial
strongly closed subgroup of $A \cap L_i$, and let
$Z = Z_1 \times \cdots \times Z_r$.
Then $Z$ is strongly closed in $G$, and
by Propositions~\ref{irreducible-action} and
\ref{normalizer-of-others-in-theorem3},
$Z$ is contained in the center of $S$.
It is immediate from Sylow's Theorem and the weak closure
of $Z$ that $N_G(Z)$ controls strong $G$-fusion in $S$.
Now
$$
N_G(Z) = (N_{L_1}(Z_1) \times \cdots \times N_{L_r}(Z_r))(D \cdot A_F)
$$
where by the proof of Theorem~\ref{theorem3-1},
$D = [D,A_F]$ may be chosen to be an $S$-invariant
$p'$-subgroup centralizing each $S_i$.
This implies
\begin{equation}
\label{control-fusion}
\text{$M = (N_{L_1}(Z_1) \times \cdots \times N_{L_r}(Z_r)) A_F$
\ controls strong $G$-fusion in $S$.}
\end{equation}
It suffices therefore to show that $N_M(A)$ controls strong
$M$-fusion in $S$.
Furthermore, $N_M(A)$ controls strong $M$-fusion in $S$ if and
only if the corresponding fact holds in $M/O_{p'}(M)$;
so we may pass to this quotient and therefore assume $O_{p'}(M) = 1$
(without encumbering the proof with overbar notation, since all normalizers
considered are for $p$-groups).

If $L_i$ is a Lie-type component with $S_i$ abelian then, as noted in the
proof of Theorem~\ref{theorem3-1}, $N_{L_i}(Z_i) = N_{L_i}(S_i)$ and
$A_F$ commutes with the action on $S_i$ of an $A_F$-stable $p'$-Hall subgroup
$H_i$ of this normalizer.  Since $O_{p'}(M) = 1$ it follows that
$H_i$ acts faithfully on $S_i$, and so $[A_F,H_i] = 1$.
On the other hand, if $L_i$ is not of this type, $[A_F,L_i] = 1$.
Thus (reading modulo $O_{p'}(M)$) we have
\begin{equation}
\label{field-action}
M = SC_M(A_F)
\end{equation}
and so $N_M(A) = N_M(A^*)$, where $A^* = A_1 \cdots A_r$.

For every component $L_i$ that is not of type $G_2(q)$ or $J_2$,
by Corollary~\ref{normalizer-of-A-all},
$N_{L_i}(Z_i) = N_{L_i}(S_i)$; and therefore in these components
$N_{L_i}(Z_i) = N_{L_i}(A_i)$ too.
However, for a component $L_i$ of type $G_2(q)$ or $J_2$
(with $p=3$), by Proposition~\ref{normalizer-of-others-in-theorem3}
we must have $Z_i = A_i$.
In all cases we have $N_{L_i}(Z_i) = N_{L_i}(A_i)$.
Hence $N_M(A^*) = N_M(Z) = M$ and the first assertion of
Corollary~\ref{corollary3-3} holds by (\ref{control-fusion}).
This also establishes the second assertion unless $p=3$ and
some components $L_i$ are of type $G_2(q)$ or $J_2$, where the
possibility that $\ord{S_i} > 3^3$ in these exceptions
is excluded by the hypotheses of
Corollary~\ref{corollary3-3}.

In the remaining case
let $S^* = S_1 \times \cdots \times S_r$, where $S_1,\dots,S_k$ are the
Sylow 3-subgroups of the components of type $G_2(q)$ or $J_2$, and
$S_{k+1},\dots,S_r$ are the remaining ones.
Again by (\ref{field-action}), $N_M(S) = N_M(S^*)$ so
we must prove the latter normalizer controls strong $M$-fusion in $S$;
indeed, it suffices to prove control of fusion in $S^*$.
Now $N_M(S^*)$ controls strong $M$-fusion in $S^*$ if and only
if the corresponding result holds in each direct factor.
This is trivial for $i > k$ as $S_i$ is normal in that factor.
For $1 \le i \le k$ the result is true since $S_i = 3^{1+2}$,
i.e., $S_i$ has a central series $1 < Z_i < S_i$ whose terms are
all weakly closed in $S_i$ with respect to $N_{L_i}(Z_i)$
(see, for example, \cite{GiSe}).
This establishes the final assertion of
Corollary~\ref{corollary3-3}.
%
%% \hfill \framebox(7,7){}\break

%% \medskip\noindent
%% {\it Proof of Corollary \ref{corollary3-4}}

\medskip

In Corollary~\ref{corollary3-4} observe that
by Theorem~\ref{theorem3-1},
once $\O_A(G)$ is factored out we have equation
(\ref{corollary3-1-proof-1}) holding, and since $A_F$
acts without fixed points on the cyclic quotient
$D/(D \cap L_1 \cdots L_r)$,
we must have $N_G(A) \le (L_1 \cdots L_r)A_F$.
Thus by (\ref{field-action}) we have
$$
N_G(A) = N_M(A) \le S C_M(A_F) O_{p'}(M).
$$
Since $N_M(A) \cap O_{p'}(M)$ centralizes $A$ we have
$N_G(A) \le S C_M(A_F)$, and hence
$N_G(A) = S C_M(A_F)$.
All parts of Corollary~\ref{corollary3-4} now follow.

\hfill \framebox(7,7){}\break

%%%%%%%%%%%%%%%%%%%%%%%%%%%%%%%%%%%%%%
\section{Examples}
\label{examples}

Throughout this section $p$ is any prime, $G$ is a finite group
possessing a nontrivial Sylow $p$-subgroup $S$.
In this section we describe some families of groups possessing
strongly closed subgroups $A$ contained in $S$.
Let $\omegabar S$
denote the {\it unique smallest strongly closed (with
respect to $G$) subgroup of $S$ that contains $\Omega_1(S)$}.
We focus primarily on groups where
$A = \omegabar S \ne S$, as these groups provide
illuminating examples of fusion, and control (or failure of control)
of fusion in $S$ by $N_G(S)$; and therefore
we describe $N_G(A)$ and $N_G(S)$ in our examples.
In particular, in Section~\ref{exotic} we
show that the extra hypotheses in the last sentence of
Corollary~\ref{corollary3-3} are necessary.
Our constructions are also significant to homotopy theory,
as they provide interesting examples
of cellularizations of classifying spaces,
as detailed in \cite{Flores-Foote08}.

First of all, an example where both $D$ and $A_F$ are nontrivial is
when $G = P\Gamma L_{11}(q)$ with $p=5$ and $q = 3^5$.
Here the simple group
$L = PSL_{11}(q)$ has an abelian Sylow 5-subgroup of type (25,25),
$PGL_{11}(q)/L$ is the cyclic outer diagonal automorphism group of
$L$ of order 11 (this is $DL/L$),
and $\gp f = A_F$ induces a group of order 5 of field automorphisms
on $PGL_{11}(q)$; in particular, $G/L$ is the non-abelian group of order 55.
If $f \in S \in Syl_5(G)$, then $A = \Omega_1(S) = \gp{f,\Omega_1(S \cap L)}$
is elementary abelian of order $5^3$ and strongly closed in $S$ with respect
to $G$, and $A^* = \Omega_1(S \cap L)$ is a minimal strongly closed
subgroup of $G$.

In this example, to compute the normalizers of $A$ and $A^*$
it is easier to work in the
universal group $GL_{11}(q)\gp f$ --- also denoted by $G$ ---
via its action on an 11-dimensional $\FF_q$-vector space $V$
(since the center of $GL_{11}(q)$ has order prime to 5) --- see
the proof of Lemma~\ref{G-not-classical} for some
general methodology.
Let $G^* = GL_{11}(q)$ and $S^* = S \cap G^*$.
Then one sees that $N_G(A^*) = N_G(S^*)$ is contained in a subgroup
$$
H = ((G_1 \times G_2)\gp t \times C)\gp f
$$
where $G_i \iso GL_4(q)$, $C \iso GL_3(q)$,
$t$ interchanges the two factors and $f$ induces field
automorphisms on all three factors and commutes with $t$
(here $G_1 \times G_2 \times C$ acts naturally on a
direct sum decomposition of $V$).
Let $S_i = S \cap G_i$, so $S_i$ is cyclic of order 25
and acts $\FF_q$-irreducibly on the 4-dimensional submodule
for $G_i$.
By basic representation theory,
$C_{G_i}(S_i)$ is cyclic of order $q^4-1$, and
$N_{G_i}(S_i)/C_{G_i}(S_i)$ is cyclic of order 4.
Thus
$$
N_G(A^*) = N_G(S^*) \iso
((q^4-1)\cdot 4 \; \times \; (q^4-1)\cdot 4)\gp{t,f} \times GL_3(3^5).
$$
Since $A_F = \gp f$ acts as a field automorphisms, similar considerations
show that
$$
N_G(A) = S(N_G(S^*) \cap C_{G^*}(f)) \iso
(400 \cdot 4 \; \times \; 400 \cdot 4)\gp{t,f} \times GL_3(3).
$$
The $G$-fusion in $S$ is effected by the group $S(4 \times 4)\gp{t}$,
which is the same for both normalizers.
In this example we may choose $D = [C_G(S^*),f]$, which is of type
$B \times B \times (SL_3(q)\cdot 121)$
where $B$ is cyclic of order $(q^4-1)/5(3^4-1)$;
a (smaller) group of diagonal automorphisms for $D$ could be chosen
inside the abelian factor $B \times B$.

\subsection{Simple groups}
\label{simple-groups}
\    %creates a blank line

The following is an
immediate consequence of Theorems~\ref{theorem3-1-p-equal-2}
and \ref{theorem3-1} (here $\O_A(G) = 1$ by
the simplicity of $G$):

\begin{corollary}
\label{corollary5-1}
Let $G$ be a simple group in which $\omegabar S \ne S$.
Then $G$ is isomorphic to one of the groups $L_i$ that appear
in the conclusions of Theorems~\ref{theorem3-1-p-equal-2}
and \ref{theorem3-1}.
In all cases the normalizer of $S$ controls strong fusion in $S$.
\end{corollary}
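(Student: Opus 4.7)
Set $A = \omegabar{S}$. Since $A$ contains $\Omega_1(S)$ it is nontrivial (as $S \ne 1$), and by hypothesis $A$ is a proper subgroup of $S$. The plan is to reduce to Theorem~\ref{theorem3-1} (or Theorem~\ref{theorem3-1-p-equal-2} when $p=2$) and Corollary~\ref{corollary3-3} by exploiting the simplicity of $G$, then to rule out the exceptional cases for fusion control.

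First I would argue that $A$ is \emph{not} a Sylow $p$-subgroup of $\gp{A^G}$ (put differently, $\O_A(G) = 1$). Indeed $\O_A(G)$ is a normal subgroup of the simple group $G$, so it is either $1$ or $G$; in the latter case $A$ would be a Sylow $p$-subgroup of $G$, contradicting $A < S$. Likewise $\gp{A^G}$ is a nontrivial normal subgroup of the simple group $G$, so $\gp{A^G} = G$, and consequently the hypotheses of Theorem~\ref{theorem3-1} (resp.\ Theorem~\ref{theorem3-1-p-equal-2}) and of Corollary~\ref{corollary3-3} are satisfied; moreover $\O_A(G) = 1$ means the passage to $G/\O_A(G)$ is trivial and no overbars are needed.

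Next I would apply the appropriate main theorem. Simplicity of $G$ combined with the decomposition
\[
G = \gp{A^G} = (L_1 \times \cdots \times L_r)(D\cdot A_F)
\]
forces $r = 1$, $G = L_1$, and $D = A_F = 1$, since $L_1\times\cdots\times L_r$ is normal in $G$ and each proper extension piece is normal. Thus $G$ is isomorphic to one of the simple groups $L_i$ enumerated in the conclusions of Theorems~\ref{theorem3-1-p-equal-2} and \ref{theorem3-1}, which establishes the first assertion.

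For the second assertion, Corollary~\ref{corollary3-3} gives that $N_G(A)$ always controls strong $G$-fusion in $S$, and that $N_G(S)$ does so provided $G$ has no component of type $G_2(q)$ with $9 \divides q^2-1$ when $p = 3$. By Corollary~\ref{normalizer-of-A-all}, $N_G(S) = N_G(A)$ in every case except $G \iso G_2(q)$ with $(q,3)=1$ and $G \iso J_2$ with $p=3$ (both with $\ord{A}=3$). The main obstacle is therefore to rule out these two exceptional isomorphism types under the hypothesis $\omegabar{S} \ne S$. In each case, Proposition~\ref{normalizer-of-others-in-theorem3} together with the observation (recorded there for $G_2(q)$ and following from Proposition~\ref{sylow-structure} and Proposition~\ref{normalizer-of-others-in-theorem3}(4) for $J_2$) that $\Omega_1(S) = S$ shows that every proper strongly closed subgroup of $S$ is strictly smaller than $\Omega_1(S)$; hence the smallest strongly closed subgroup containing $\Omega_1(S)$ must be $S$ itself, contradicting $\omegabar{S} \ne S$. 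Thus neither exception arises, $N_G(S) = N_G(A)$ in every remaining case, and $N_G(S)$ controls strong $G$-fusion in $S$, completing the proof.
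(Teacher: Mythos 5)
Your proof is correct and follows essentially the same route as the paper: use simplicity to get $\O_A(G)=1$ and $\gp{A^G}=G$, apply Theorems~\ref{theorem3-1-p-equal-2} and \ref{theorem3-1} plus Corollary~\ref{corollary3-3}, and kill the exceptional case by observing that $S=\Omega_1(S)$ there (so $\omegabar S = S$, against the hypothesis). The only cosmetic difference is that you route the second assertion through Corollary~\ref{normalizer-of-A-all} and hence must also dispose of $J_2$ (which works, since its Sylow $3$-subgroup has exponent $3$), whereas the paper invokes the second clause of Corollary~\ref{corollary3-3} directly and only needs to exclude $G_2(q)$.
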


\begin{proof}
The first assertion is immediate.
Recall that if $G$ is the simple group $G_2(q)$ for some $q$ with
$(q,3) = 1$, then we showed in the proof of
Proposition~\ref{normalizer-of-others-in-theorem3}
(and at the end of the proof of
Lemma~\ref{G-not-classical}) that $S = \Omega_1(S)$.
Thus by Corollary~\ref{corollary3-3}, in all cases in
where $\omegabar S \ne S$
the normalizer of $S$ controls strong fusion in $S$.
\end{proof}

With the exception of the groups of Lie type in characteristic $\ne p$,
the Sylow-$p$ normalizers of the simple groups appearing in the conclusions to
Theorems~\ref{theorem3-1-p-equal-2} and \ref{theorem3-1}
are described explicitly in
Proposition~\ref{normalizer-of-others-in-theorem3}.
We therefore add here only some observations on the structure of the
normalizers in the remaining case.

Let $G$ be a group of Lie type over a
field of characteristic $r \ne p$ and
suppose the Sylow $p$-subgroup $S$ of $G$
is abelian but not elementary abelian (here $p$ is odd).
The overall structure of $N_G(S)$ is governed by the theory of
algebraic groups, as invoked in the proof of
Proposition~\ref{normalizer-of-A-Lie}.
Recapping from that argument:
since the Schur multiplier of $G$ is prime to $p$ we may
work in the universal version of $G$ to describe $N_G(S)$.
Let $\overline G$ be the simply connected universal simple algebraic
group over the algebraic closure of $\FF_r$, and let $\sigma$ be a
Steinberg endomorphism whose fixed points equal $G$.
In the notation of \cite{SS}, $p$ is not a torsion prime for
$\overline G$, so by 5.8 therein $C_{\overline G}(S)$ is a connected,
reductive group whose semisimple component is simply connected.
The general theory of connected, reductive algebraic groups gives
that $C_{\overline G}(S) = \overline Z \, \overline L$, where
$\overline Z$ is the connected component of the center of
$C_{\overline G}(S)$, $\overline L$ is the semisimple component
(possibly trivial), and $\overline Z \cap \overline L$ is a finite group.
Furthermore, $\overline L$ is a product of groups of Lie
type over the algebraic closure of $\FF_r$ of smaller rank than
$\overline G$.  It follows that $C_{\overline G}(S)$ is a commuting
product of the fixed points of $\sigma$ on $\overline Z$ and
$\overline L$, i.e.,
$$
C_{\overline G}(S) = C_{\overline Z}(\sigma) C_{\overline L}(\sigma)
$$
where $S \le C_{\overline Z}(\sigma)$ is an abelian group (a finite torus)
and $C_{\overline L}(\sigma)$ is either solvable or a product of
finite Lie type groups in characteristic $r$.

To complete the generic description of $N_G(S)$
we invoke additional facts from \cite{SS} and \cite[Section 4.10]{GLS3}.
As above, $S$ is contained in a $\sigma$-stable maximal torus
$\overline T_1$, where $\overline T_1$ is obtained from a $\sigma$-stable
split maximal torus $\overline T$ by twisting by some element $w$ of
the Weyl group $W = N_{\overline G}(\overline T)/\overline T$ of
$\overline G$.
Since $S$ is characteristic in the finite torus
$T_1 = \overline{(T_1)}_\sigma$ it follows that $N_G(S)/C_G(S) \iso N_G(T_1)/T_1$.
In most cases, by 1.8 of \cite{SS} or Proposition~3.36 of
\cite{Ca85} we have $N_G(T_1)/T_1 \iso W_\sigma \iso C_W(w)$
(see also \cite[Theorem 2.1.2(d)]{GLS3} and the techniques in the
proof of Theorem~4.10.2 in that volume).

In the special case where $G$ is a classical group (linear, unitary,
symplectic, orthogonal) the normalizer of $S$ can be computed explicitly
by its action on the underlying natural module, $V$, as described in
the proof of Lemma~\ref{G-not-classical}.
In the notation of this lemma, the semisimple component
of order prime to $p$ comes from the normal
subgroup $\text{Isom}(V_0)$ in $\text{Isom}(V)$, where $V_0 = C_V(S)$,
and $S$ is the direct product of the cyclic groups $S \cap \text{Isom}(V_i)$
for $i=1,2,\dots,s$.
The Weyl group normalizing $S$ acts as the symmetric group $S_s$ permuting
the subgroups $\text{Isom}(V_i)$.
The orders of the centralizer and normalizer of a (cyclic) Sylow $p$-subgroup
in each subgroup $\text{Isom}(V_i)$ depend on $p$ and the nature of $G$ ---
Chapter~3 of \cite{Ca85} gives techniques for computing these.

For an easy explicit example of this let $G = SL_{n+1}(q)$ where
$q = r^m$ and $p > n+1$, and assume $p \divides q-1$. In this case
we may choose $S$ contained in the group of diagonal matrices $T$
of determinant 1, which is an abelian group of type
$(q-1,\dots,q-1)$ of rank $n$ (here $T$ is the split torus). In
this case $T = C_G(S)$ and $N_G(S) = N_G(T) = TW$, where $W \iso
S_{n+1}$ is the group of permutation matrices permuting the
entries of matrices in $T$ in the natural fashion (as the ``trace
zero'' submodule of the natural action on the direct product of
$n+1$ copies of the cyclic group of order $q-1$). To obtain the
Sylow $p$-normalizer in the simple group $PSL_{n+1}(q)$ factor out
the subgroup of scalar matrices of order $(n+1,q-1)$.

\subsection{Split extensions}
\    %creates a blank line

In this subsection we consider some non-simple groups possessing
strongly closed $p$-subgroups in which $S \ne \omegabar S$.
We show that many split extensions for
which these conditions hold can be constructed. This
construction demonstrates that even when
$N_{\overline G}(\overline S)$ (or $N_{\overline G}(\overline A)$)
controls $\overline G$-fusion in $\overline S$, where overbars denote
passage to $G/O_A(G)$, it need {\it not} be the case that $N_G(A)$
controls fusion in $S$ (or in $A$), even when $N_{\overline
G}(\overline A) = \overline{N_G(A)}$. This highlights the
importance of ``recognizing'' the subgroup $\O_A(G)$ as well as
the isomorphism types of the components of $G/O_A(G)$ in our
classifications.

\begin{proposition}
\label{not-control-fusion-eg}
Let $R$ be any group that is not a $p$-group but is generated
by elements of order $p$.
Assume also that $\omegabar T \ne T$ for some Sylow $p$-subgroup $T$ of $R$.
Let $E$ be any elementary abelian $p$-group on which $R$ acts in such
a way that $R/C_R(E)$ is not a $p$-group.
Let $G$ be the semidirect product $E \sdp R$, and let $S = ET$ be
a Sylow $p$-subgroup of $G$.
Then $G$ is generated by elements of order $p$, $\omegabar S \ne S$,
and $N_G(S)$ does not control fusion in $S$.
\end{proposition}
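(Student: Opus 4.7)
The first assertion is immediate: since $E$ is elementary abelian every element of $E$ has order dividing $p$, and $R$ is generated by order-$p$ elements by hypothesis, so $G = ER$ is generated by order-$p$ elements.

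For the second assertion I exhibit a proper strongly closed subgroup of $S$ containing $\Omega_1(S)$. Take $A = E\omegabar T$; because $E \nor G$ and $\omegabar T \le T \le S$, this is a subgroup of $S$, and $A < S = ET$ since $\omegabar T < T$. Any $x = et \in S$ of order at most $p$ has image $tE \in S/E \iso T$ of order dividing $p$, forcing $t \in \Omega_1(T) \subseteq \omegabar T$, so $\Omega_1(S) \subseteq A$. For strong closure of $A$ in $S$ with respect to $G$: conjugating $a = et$ with $t \in \omegabar T$ by any $g = e_0 r \in G$ yields an element of the form $e' \cdot t^r$ with $e' \in E$; if this lies in $S$ then $t^r \in T$, and strong closure of $\omegabar T$ in $T$ with respect to $R$ forces $t^r \in \omegabar T$, whence $a^g \in A$.

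For the third assertion, first compute $N_G(S) = E N_R(T)$ (from $E \nor G$ and $S \cap R = T$). I will exhibit $e \in E$ and an order-$p$ element $y \in R$ such that $e$ and $y(e)$ are $G$-conjugate (via $y$) but not $N_G(S)$-conjugate. Since $E$ is abelian, $e^G = e^R$ and $e^{N_G(S)} = e^{N_R(T)}$, so the claim reduces to producing $e \in E$ with $y(e) \notin N_R(T) \cdot e$, equivalently $y \notin N_R(T) C_R(e)$. To locate a candidate $y$, pass to $\bar R = R/C_R(E)$: it is generated by $p$-elements (images of the generators of $R$) and is not a $p$-group, so its Sylow $\bar T$ satisfies $N_{\bar R}(\bar T) < \bar R$ (otherwise $\bar T \nor \bar R$ would force the $p'$-quotient $\bar R/\bar T$ to be a $p'$-group generated by $p$-elements, hence trivial, contradicting $\bar R \ne \bar T$). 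Pick an order-$p$ element $\bar y \in \bar R \setminus N_{\bar R}(\bar T)$ and lift it to $y \in R$ of order $p$ with $y \notin N_R(T) C_R(E)$; then $y$ acts nontrivially on $E$.

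The hard part is selecting $e \in E$ to witness the fusion failure. For each $n \in N_R(T)$ the linear map $y - n$ on $E$ has proper kernel $K_n$ (since $yn^{-1} \notin C_R(E)$ forces $y \ne n$ as maps on $E$); the desired $e$ is any element outside $\bigcup_{n \in N_R(T)} K_n$, equivalently any $e$ whose $N_R(T)$-orbit is not preserved by $y$. If instead $y$ preserved every $N_R(T)$-orbit on $E$, then so would every conjugate of $y$ in $R$ lying outside $N_R(T) C_R(E)$, and these together with $N_R(T)$ generate $R$ modulo $C_R(E)$; consequently the $\bar R$-orbits on $E$ would coincide with the $N_{\bar R}(\bar T)$-orbits, giving $\bar R = N_{\bar R}(\bar T) C_{\bar R}(e)$ for every $e \in E$. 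Choosing $e$ whose $\bar R$-stabilizer is sufficiently small then collapses this to $\bar R = N_{\bar R}(\bar T)$, contradicting the proper inclusion. Producing such an $e$ is the technical heart of the argument and uses the faithfulness of the action of $\bar R$ on $E$ together with the $p$-generation structure.
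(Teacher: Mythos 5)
Your handling of the first two assertions is correct: exhibiting $E\,\omegabar T$ directly as a proper strongly closed subgroup of $S$ containing $\Omega_1(S)$ is a sound (and slightly more explicit) version of the paper's appeal to the splitness of the extension and Lemma~\ref{basic-facts}.

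The third assertion is where there is a genuine gap. Your target --- find $e \in E$ and $y \in R$ with $e^y \in S$ but $e^y \notin e^{N_R(T)}$ --- is a legitimate way to witness the failure, but the existence step is not established. The claim that one may choose $e$ outside $\bigcup_{n \in N_R(T)} \ker(y-n)$ fails over $\FF_p$: a finite-dimensional $\FF_p$-space is frequently the union of finitely many proper subspaces (already $\FF_p^2$ is the union of its $p+1$ lines), so the properness of each kernel $K_n$ proves nothing once $|N_R(T)|$ is large compared with $p$. The fallback argument is also unsound: there is no reason that $y$ preserving every $N_R(T)$-orbit would force every conjugate of $y$ outside $N_R(T)C_R(E)$ to do so, nor that those conjugates together with $N_R(T)$ generate $R$ modulo $C_R(E)$, nor that an $e$ with ``sufficiently small'' stabilizer exists --- and you concede yourself that the crucial step is unproven. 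The paper avoids all of this by a reduction you are missing: take a chief series of $E$ as an $\FF_p R$-module; since $R$ is generated by elements of order $p$, if every chief factor were one-dimensional then $R$ would act by unipotent upper-triangular matrices and $R/C_R(E)$ would be a $p$-group, contrary to hypothesis. Hence some chief factor is irreducible and noncentral. Passing to the corresponding quotient of $G$ (control of fusion is inherited by quotients), one may assume $E_1$ is a minimal normal noncentral subgroup of $G$; then $Z_1 = Z(S)\cap E_1$ is nontrivial, proper in $E_1$, and $N_G(S)$-invariant, while irreducibility of the $R$-action gives $\gp{Z_1^G} = E_1 \ne Z_1$, so some $z\in Z_1$ is $G$-conjugate into $E_1 - Z_1 \subseteq S$ but cannot be $N_G(S)$-conjugate out of $Z_1$. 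If you wish to keep your element-chasing framework, you must first perform this reduction to an irreducible noncentral factor; only there does the $N_G(S)$-invariant subgroup $Z_1$ hand you the witnessing element.
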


\begin{proof}
Note that the split extension
$G = ER$ is clearly generated by elements of order $p$
since both $E$ and $R$ are.  Also, $\omegabar S$ contains $E$,
and by Lemma~\ref{basic-facts}, since the extension is split we obtain
$\omegabar S /E \iso \omegabar T < T$, so $\omegabar S \ne S$.
It remains to show that $N_G(S)$ does not control fusion in $S$.

Let $0 = E_0 < E_1< \dots < E_{n-1}< E_n=E$ be a chief series
through $E$, so that each factor $E_i/E_{i-1}$ is an irreducible
$\FF_p R$-module. If each such factor is one-dimensional, then $R$
is represented by upper triangular matrices in its action on $E$.
Since $R$ is generated by elements of order $p$, it must be
represented by unipotent matrices, hence $R/C_R(E)$ is a
$p$-group, a contradiction.

Thus there is some chief factor $E_i/E_{i-1}$ that is not one-dimensional.
If a Sylow normalizer controlled fusion in $S$, then by Lemma~\ref{basic-facts}
the same would be true in the quotient group $G/E_{i-1}$; we show this is
not the case.  To do so, we may pass to the quotient and therefore assume
$E_1$ is a minimal normal, noncentral subgroup of $G$.
Now $Z_1 = Z(S) \cap E_1 \ne 1$ and $Z_1$ is invariant under $N_G(S)$.
However, $R$ acts irreducibly and nontrivially on $E_1$ and $R$ is generated
by conjugates of $S$, so $Z_1 \ne E_1$ and hence $Z_1$ is not $R$-invariant.
Thus for some $z \in Z_1$ and $g \in G$
we must have $z^g \in E_1 - Z_1$, which shows $N_G(S)$ does not
control fusion in $S$.
\end{proof}

This proposition can be invoked to create a host of examples:
Let $R$ be any of the simple groups $L_i$ (or their
quasisimple universal covers) in the conclusion
to Theorem~\ref{theorem3-1} and let $E$ be an $\FF_p R$-module
on which $R$ acts nontrivially (for example, any nontrivial permutation
module).
More specifically, for $p$ odd
let $q$ be any prime power such that $p^2 \divides q-1$,
so that Sylow $p$-subgroups of $R = SL_2(q)$ are cyclic of order $\ge p^2$
(for example, $p=3$ and $q = 19$).
Then $R$ permutes the $q+1$ lines in a 2-dimensional space over $\FF_q$, and
so permutes $q+1$ basis vectors in a $q+1$-dimensional vector
space $E$ over $\FF_p$. Then $G = E\rtimes R$ gives a specific realization
for Proposition~\ref{not-control-fusion-eg}.

Building on the preceding example where $R = SL(2,q)$ for any
prime power $q$ such that $p^2 \divides q-1$: then $T$ may be
represented by diagonal matrices over $\FF_q$, so is cyclic of
order $p^n = \ord{q-1}_p$; moreover, $C_R(T)$ is the group of all
diagonal matrices of determinant 1, hence is cyclic of order
$q-1$. In particular, $\omegabar T = \Omega_1(T) \iso \Z /p$.
Furthermore, $N_R(T) = N_R(\omegabar T)$ is of index 2 in $C_R(T)$
and an involution in $N_R(T)$ inverts $C_R(T)$.
Thus $N_R(\omegabar T)/\omegabar T$ is isomorphic to the dihedral group
of order $2(q-1)/p$.

\subsection{Exotic extensions of $\pmb{G_2(q)}$}
\label{exotic}
\    %creates a blank line

When $G$ is the simple group $G_2(q)$ for some $q$ with $(q,3) = 1$,
although a Sylow 3-subgroup $S$ contains a strongly closed
subgroup $A = Z(S)$ of order $p = 3$, when we impose the
additional hypothesis that our strongly closed subgroup must
contain all elements of order 3 the strongly closed subgroup $A$
does not arise in our considerations because $S = \Omega_1(S)$.
For the same reason, if $G = ER$ is any split extension of
$R = G_2(q)$ by an elementary abelian 3-group
and $S = ET$ for $T \in Syl_3(R)$, then again
$S = \Omega_1(S) = \omegabar S$. In this subsection we describe a
family of extensions that we call ``half-split" in the sense that
they split over a certain conjugacy class of elements of $R$ but
do not split over another.  In this way we construct extensions
$G$ of $R = G_2(q)$ by certain elementary abelian 3-groups $E$
such that for $S \in Syl_3(G)$ we have $\Omega_1(S)/E$ mapping
onto the strongly closed subgroup of order 3 in a Sylow
3-subgroup $S/E$ of $G_2(q)$.  In particular, these ``exotic''
extensions show that the exceptional case of
Corollary~\ref{corollary3-3} cannot be removed:
when $9 \divides q^2 - 1$ these groups $G$ are generated by
elements of order 3, have $\omegabar S \ne S$,
but $N_{G/E}(S/E)$ does not control fusion in
$S/E$ (here $E = \O_A(G)$ where $A = \omegabar S$).

The following general proposition will construct such extensions.

%%%%%%%%%%%%%%%%%%%%%%%%%%%%%%%%%%%%%%%%%%%%%%%%%%%%%%%%%%%%%%%%%%%%%%%%%%
\begin{proposition}
\label{nonsplit-construction}
%%%%%%%%%%%%%%%%%%%%%%%%%%%%%%%%%%%%%%%%%%%%%%%%%%%%%%%%%%%%%%%%%%%%%%%%%%
Let $p$ be a prime dividing the order of the finite group $R$
and let $X$ be a subgroup of order $p$ in $R$.
Then there is an $\FF_p R$-module $E$ and an extension
$$
1 \longrightarrow E \longrightarrow  G \longrightarrow  R \longrightarrow  1
$$
of $R$ by $E$ such that the
extension of $X$ by $E$ does not split, but the extension of $Z$ by
$E$ splits for every subgroup $Z$ of order $p$ in $R$ that is not conjugate
to $X$.
In particular, for nonidentity elements $x \in X$ and $z \in Z$ every element
in the coset $xE$ has order $p^2$ whereas $zE$ contains elements of order $p$
in $G$.
\end{proposition}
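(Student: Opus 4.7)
The plan is cohomological: build the extension from a carefully chosen class in $H^2(R,E)$, and choose $E$ so that this class restricts nontrivially to $X$ but vanishes on every other conjugacy class of order-$p$ subgroups. The natural candidate is the induced module $E = \Ind_X^R(\FF_p)$, with $\FF_p$ the trivial $\FF_p X$-module, because its restriction to any subgroup $Z$ of prime order $p$ whose conjugates meet no $R$-conjugate of $X$ nontrivially will be $\FF_p Z$-free, killing cohomology in positive degrees.

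By Shapiro's lemma, $H^2(R, \Ind_X^R \FF_p) \iso H^2(X, \FF_p) \iso \FF_p$, so we pick a nonzero class $\alpha$ and let $G$ be the corresponding extension. The Shapiro isomorphism factors as
\[
H^2(R, E) \xrightarrow{\;\Res_X^R\;} H^2(X, E|_X) \xrightarrow{\;\pi_*\;} H^2(X, \FF_p),
\]
where $\pi : E|_X \to \FF_p$ is the $X$-equivariant projection onto the ``trivial coset'' summand. Since the composite sends $\alpha$ to a nonzero element, in particular $\Res_X^R(\alpha) \ne 0$, so the preimage $P$ of $X$ in $G$ is a non-split extension of $X$ by $E$. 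Because $E$ is elementary abelian of exponent $p$, any element of order $p$ in $P \setminus E$ would generate a complement to $E$ in $P$; its absence forces every element of $xE$, for $1 \ne x \in X$, to have order exactly $p^2$.

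For any $Z$ of order $p$ not conjugate to $X$, apply the Mackey double-coset decomposition
\[
E|_Z \;=\; \bigoplus_{g \in Z \backslash R / X} \Ind_{Z \cap {}^gX}^Z \FF_p.
\]
Since $Z$ and ${}^gX$ are distinct subgroups of the prime order $p$ for every $g \in R$ (otherwise $Z$ would be $R$-conjugate to $X$), their intersection is trivial, so each summand is a free $\FF_p Z$-module. Hence $H^2(Z, E|_Z) = 0$, and therefore $\Res_Z^R(\alpha) = 0$. The extension consequently splits over $Z$; any choice of complement exhibits an element of order $p$ in the coset $zE$ for each $z \in Z$.

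The only delicate point is step one: verifying that the Shapiro isomorphism factors as $\pi_* \circ \Res_X^R$, so that the nonvanishing of $\alpha$ upstairs forces the nonvanishing of its restriction to $X$. Once that identification is in hand, step two is immediate from the Mackey formula together with the fact that two distinct subgroups of prime order intersect trivially, and the order-$p^2$ assertion in the coset $xE$ is the standard characterization of non-split extensions of $\Z/p$ by an elementary abelian $p$-group.
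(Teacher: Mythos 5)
Your proposal is correct and follows essentially the same route as the paper: induced/coinduced module $\Ind_X^R\FF_p$, Shapiro's lemma factored as $\pi_*\circ\Res_X^R$ to get non-splitting over $X$, and the Mackey decomposition with $Z\cap{}^gX=1$ to get $\FF_pZ$-freeness and hence splitting over every non-conjugate $Z$. The one step you flag as delicate --- that the Shapiro isomorphism is the composite of restriction with the evaluation map $f\mapsto f(1)$ --- is exactly the standard form of the lemma (Brown, Prop.\ III.6.2), which is what the paper cites.
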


\begin{proof}
Let $E_0$ be the one-dimensional trivial $\FF_p X$-module.
By the familiar cohomology of cyclic groups (\cite{Br82}, Section III.1):
\begin{equation}
H^2(X,E_0) \iso \ZZ/p\ZZ
\label{cohom-1}
\end{equation}
and a non-split extension of $X$ by $E_0$ is just a cyclic group of order $p^2$.
Now let
$$
E = \Coind_X^R E_0 = \Hom_{\ZZ X}(\ZZ R,E_0)
$$
be the coinduced module from $X$ to $R$ (which is isomorphic to the
induced module $E_0 \tensor_{\FF_p X} \FF_p R$ in the case of
finite groups), so that $E$ has $\FF_p$-dimension $\frac 1p \ord R$.
By Shapiro's Lemma (\cite{Br82}, Proposition~III.6.2)
\begin{equation}
H^2(R,E) \iso H^2(X,E_0).
\label{cohom-2}
\end{equation}
Thus by (\ref{cohom-1}) there is a non-split extension of $R$ by $E$ ---
call this extension group $G$ and identify $E$ as a normal subgroup of
$G$ with quotient group $G/E = R$.

The isomorphism in Shapiro's Lemma, (\ref{cohom-2}), is given by
the compatible homomorphisms
%\break
$\iota : X \hookrightarrow R$ and $\pi : \Coind_X^R E_0 \rightarrow E_0$,
where $\pi$ is the natural map $\pi(f) = f(1)$.
In particular, this isomorphism is a composition
$$
H^2(R,E) \overset {\text{res}}\longrightarrow H^2(X,E)
\overset {\pi^*} \longrightarrow H^2(X,E_0).
$$
Thus the 2-cocycle defining the non-split extension group $G$,
which maps to a nontrivial element in $H^2(X,E_0)$,
by restriction gives a non-split extension of $X$ by $E$ as well.

For any subgroup $Z$ of $R$ of order $p$
with $Z$ not conjugate to $X$,
by the Mackey decomposition for induced representations
\begin{equation}
\Res_Z^R \, \Ind_X^R \, E_0 = \bigoplus_{g \in \mathcal R}
\Ind_{Z \cap g X g^{-1}}^Z \, \Res_{Z \cap g X g^{-1}}^{g X g^{-1}} g E_0
\label{cohom-3}
\end{equation}
where $\mathcal R$ is a set of representatives for the $(Z,X)$-double cosets in $R$.
By hypothesis, $Z \cap gXg^{-1} = 1$ for every $g \in R$,
hence each term in the direct sum on the right hand side
is an $\FF_p Z$-module obtained by inducing a one-dimensional trivial
$\FF_p$-module for the identity subgroup to a $p$-dimensional $\FF_p Z$-module,
i.e., is a free $\FF_p Z$-module of rank 1.
(Alternatively, $E$ is the $\FF_p$-permutation module for the action of $R$ by
left multiplication on the left cosets of $X$;
by the fusion hypothesis, $Z$ acts on a basis of $E$ as a product of
disjoint $p$-cycles with no 1-cycles.)
This shows $E$ is a free $\FF_p Z$-module, and hence the extension
of $Z$ by $E$ splits.
This completes the proof.
\end{proof}

The $p^{\text{th}}$-power map on elements in the lift of $X$ to
$G$ can be described more precisely.
By the Mackey decomposition in (\ref{cohom-3}) inducing from $X$
but rather restricting to $X$ instead of $Z$, or by direct inspection of
the action of $X$ on the $\FF_p$-permutation module $E$, we see that
$E$ decomposes as an $\FF_p X$-module direct sum as
$$
E = E_1 \oplus E_2,
$$
where $E_1$ is a trivial $\FF_p X$-module and $E_2$ is a free $\FF_p X$-module.
Since $X$ splits over the free summand $E_2$,
we see that $X$ does not split over $E_1$, and hence
$$
XE_1 \iso (\Z /{p^2}) \times \Z /p \times \dots \times \Z /p
\qquad\text{with}\qquad E_1 = \Omega_1(XE_1).
$$
Thus for every element $x$ in $G - E$ mapping to an element of $X$ in $G/E$,
$x^p$ has a nontrivial component in $E_1$.

\medskip

One may also observe that by
taking direct sums we can arrange more generally that if
$X_1,X_2,\dots,X_n$ are representatives of the distinct conjugacy classes of
subgroups of order $p$ in $R$, then for any $i \in \{1,2,\dots,n\}$
there is an $\FF_p R$-module $E$ and an
extension of $R$ by $E$ such that in the extension group
each of $X_1,\dots,X_i$ splits over $E$ but none
of $X_{i+1},\dots,X_n$ do.

\medskip

We are particularly interested in the case $R = G_2(q)$
with $p = 3$ and $(q,3) = 1$.
The normalizer of a Sylow 3-subgroup of $R$ is described in
Proposition~\ref{normalizer-of-others-in-theorem3}:
Let $T \in Syl_3(R)$ and let $Z = Z(T) = \gp z$.
In the notation preceding
Proposition~\ref{irreducible-action},
$N_R(Z) \iso SL_3^\epsilon(q)\cdot 2$ according as
$3 \divides q-\epsilon$.
Moreover, if $9 \divides q-\epsilon$ then
$N_R(T)$ does not control fusion in $T$:
all elements of order 3 in $T-Z$ are conjugate in
$C_R(Z)$ whereas by
Proposition~\ref{normalizer-of-others-in-theorem3},
$N_R(T)/T$ has order 4 for this congruence of $q$.

Now consider the extension group $G$ constructed in
Proposition~\ref{nonsplit-construction} with $p=3$, $R = G_2(q)$,
$Z = \gp z$ and $X = \gp x$ for any $x \in T - Z$ of order 3.
Let $S \in Syl_3(G)$ with $S$ mapping onto $T$ in $G/E \iso R$.
Since Proposition~\ref{normalizer-of-others-in-theorem3}
shows all elements of order 3 in
$T - Z$ are conjugate to $x$ but not to $z$,
the structure of the extension implies that
$A = \Omega_1(S) = \omegabar S$ contains $E$ and
maps to $Z$ in $S/E$.
Thus $\O_A(G) = E$ and $\overline A = \overline Z$.
By Corollary~\ref{corollary3-3}, the normalizer of $Z$ in
$R = G_2(q)$ controls 3-fusion in $G_2(q)$, so in particular
$SL^*_3(q)$ has the same mod 3 cohomology as $G_2(q)$, where
$SL^*_3(q)$ denotes the group $SL_3^\epsilon(q)$ together
with the outer (graph) automorphism of order 2 inverting its
center ($N_R(Z) \iso SL^*_3(q)$). On the other hand, $Z$ is normal
in $SL^*_3(q)$, and $SL^*_3(q)/Z$ is isomorphic to $PSL^*_3(q)$.

This example highlights the importance of having a
classification of \emph{all} groups possessing a nontrivial
strongly closed $p$-subgroup that is not Sylow ---
not just the simple groups having
such a subgroup that contains $\Omega_1(S)$ --- since
the subgroup $\omegabar S$ does not pass in a transparent
fashion to quotients.

\medskip

The extensions of our techniques and results to more general
$p$-local spaces with a notion of $p$-fusion seem to be the
natural next step of our study; in particular, classifying spaces
of $p$-local finite groups and some families of non-finite groups
offer enticing possibilities.

%%%%%%%%%%%%%%%%%%%%%%%%%%%%%%%%%%%%%%%%%%%%%%%%%%
%%%%%%%%%%%%%%%%%%% REFERENCES %%%%%%%%%%%%%%%%%%%
%%%%%%%%%%%%%%%%%%%%%%%%%%%%%%%%%%%%%%%%%%%%%%%%%%

\bigskip

\bigskip\noindent
Ram\'on J.~Flores\\
Departamento de Estad\'\i stica, Universidad Carlos III
de Madrid, C/ Madrid 126\\
E -- 28903 Colmenarejo (Madrid) --- Spain \\
e-mail: {\tt rflores@est-econ.uc3m.es}

\medskip

\noindent
Richard M.~Foote\\
Department of Mathematics and Statistics, University of Vermont, \\
16 Colchester Avenue,
Burlington, Vermont 05405 --- U.S.A. \\
e-mail: {\tt foote@math.uvm.edu}

% OJO A A CON DOS SIGNIFICADOS
% PROBLEMAS ABIERTOS
% QUE HACER CON EL TEOREMA GORDO DE RICHARD?

%NOTACION PARA BG-->P

\end{document}